\newcommand{\e}{\varepsilon}
\newcommand{\h}{\mathfrak{h}}
\newcommand{\aw}{a}
\newcommand{\ind}{\mathbf{1}}
\DeclareMathAlphabet{\mathpzc}{OT1}{pzc}{m}{it}
\DeclareMathAlphabet{\mathpzc}{OT1}{pzc}{m}{it}
\DeclareMathAlphabet{\mathdutchcal}{U}{dutchcal}{m}{n}
\SetMathAlphabet{\mathdutchcal}{bold}{U}{dutchcal}{b}{n}
\renewcommand{\Pr}{\mathbb{P}}
\newcommand{\Ex}{\mathbb{E}}
\newcommand{\Con}{C}
\renewcommand{\d}{\diff}
\newcommand{\logq}{\eta_q}
\renewcommand{\hat}{\widehat}
\newcommand{\lln}{\gamma_\kappa}
\newcommand{\gint}{\mathfrak{M}(a;N,M-1)}
\numberwithin{equation}{section}
\tikzset{
>=stealth',
help lines/.style={dashed, thick},
axis/.style={<->},
important line/.style={thick},
connection/.style={thick, dotted},
punkt/.style={
rectangle,
rounded corners,
draw=black, thick,
text width=4.5em,
minimum height=2em,
text centered,
},
pil/.style={
->,
thick,
gray,
shorten <=2pt,
shorten >=2pt,}
}
\newtheorem{proposition}{Proposition}[section]
\newtheorem{lemma}[proposition]{Lemma}
\newtheorem{theorem}[proposition]{Theorem}
\newtheorem*{theorem*}{Theorem}
\theoremstyle{definition}
\newtheorem{definition}[proposition]{Definition}
\newtheorem*{remark*}{Remark}
\renewcommand{\i}{\infty}
\newcommand{\R}{\mathbb{R}}
\newcommand{\Z}{\mathbb{Z}}
\newcommand*\diff{\mathop{}\!\mathrm{d}}
\title{Lower tail Large Deviations of the Stochastic Six Vertex Model}
\author[S.\ Das]{Sayan Das}
\address{S.\ Das,
	Department of Mathematics, University of Chicago,
	\newline\hphantom{\quad \ \ S. Das}
	5734 S.~University Avenue, Chicago, IL 60637, USA
}
\email{sayan.das@columbia.edu}
\author[Y. Liao]{Yuchen Liao}\address{Y. Liao, 
School of Mathematical Sciences, University of Science and Technology of China, \newline\hphantom{\quad \ \ Y. Liao}
No.96 Jinzhai Road,
Hefei, Anhui 230026, China}\email{ycliao@ustc.edu.cn}
\author[M. Mucciconi]{Matteo Mucciconi}\address{M. Mucciconi, 
Department of Mathematics,
National University of Singapore, \newline\hphantom{\quad \ \ M. Mucciconi}
S17, 10 Lower Kent Ridge Road, 119076, Singapore}\email{matteomucciconi@gmail.com}
\date{}
\begin{document}

\maketitle

\begin{abstract} 
In this paper, we study lower tail probabilities of the height function $\h(M,N)$ of the stochastic six-vertex model. We introduce a novel combinatorial approach to demonstrate that the tail probabilities $\Pr(\h(M,N) \ge r)$ are log-concave in a certain weak sense. We prove further that for each $\kappa>0$ the lower tail of $-\h(\lfloor \kappa N \rfloor, N)$ satisfies a Large Deviation Principle (LDP) with speed $N^2$ and a rate function $\Phi_\kappa^{(-)}$, which is given by the infimal deconvolution between a certain energy integral and a parabola.
\\
\indent
Our analysis begins with a distributional identity from \cite{BO2016_ASEP}, which relates the lower tail of the height function, after a random shift, with a multiplicative functional of the Schur measure. Tools from potential theory allow us to extract the LDP for the shifted height function.  We then use our weak log-concavity result, along with a deconvolution scheme from our earlier paper \cite{das2023large}, to convert the LDP for the shifted height function to the LDP for the stochastic six-vertex model height function.
\end{abstract}

\tableofcontents

\section{Introduction}

\subsection{The model and main results} \label{subs:model and main result}  The stochastic six-vertex model (S6V) was introduced in the physics literature by Gwa and Spohn in \cite{GwaSpohn1992} as a stochastic variant of the square-ice model \cite{pauling1935structure}. It is defined as a probability measure on configurations of up-right directed paths (viewed as a string of arrows) on the quadrant ${\Lambda}:=\Z_{\ge 0}^2$ that satisfies the following two properties:
\begin{itemize}[leftmargin=20pt]
    \item Every path begins from $x$-axis or $y$-axis and leaves the co-ordinate axes immediately.
    \item Paths do not share edges, but they may share vertices.
\end{itemize}
An example of an ensemble of admissible paths is shown in \Cref{fig:s6v_conf}. Due to the aforementioned conditions, each vertex in $\Z_{\ge 1}^2$ has six possible configurations, shown in \Cref{fig:vertex weights}. 

To define the probability measure, it is necessary to specify an initial condition for the arrows. In this paper, we consider the \textit{step initial condition} in which all the vertices along $\{(1,n):n\in \mathbb{Z}_{\geq 1}\}$ have an incoming horizontal arrow from left, but none of the vertices in $\{(m,1):m\in \mathbb{Z}_{\geq 1}\}$ has any incoming vertical arrow from below,  see \Cref{fig:s6v_conf} for an example.  Given this initial condition, S6V model is defined in the following Markovian manner. Fix $\aw, q \in (0,1)$ and set $\mathbb{T}_n:=\{(x,y) \in \Lambda : x+y\le n\}$. Conditional on the incoming arrows from $\mathbb{T}_{n-1}$ we sample all the vertices in $\mathbb{T}_n\setminus \mathbb{T}_{n-1}$ according to the probabilities given in \Cref{fig:vertex weights}. The S6V model is then defined as a limit of these measures as $n\to\infty$.

  \begin{figure}[t]
      \centering
      \includegraphics[width = \linewidth]{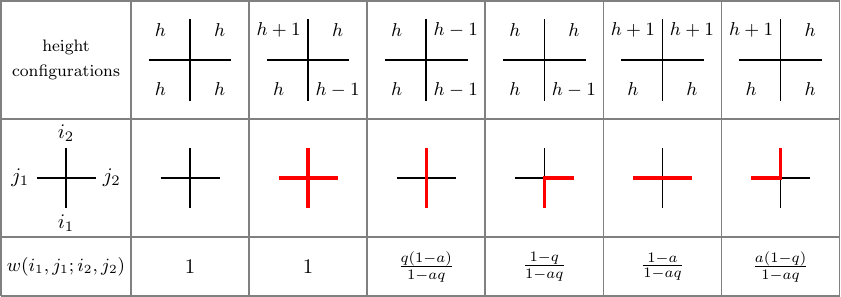}
      \caption{{Six possible local configurations and corresponding weights}}
      \label{fig:vertex weights}
  \end{figure}

    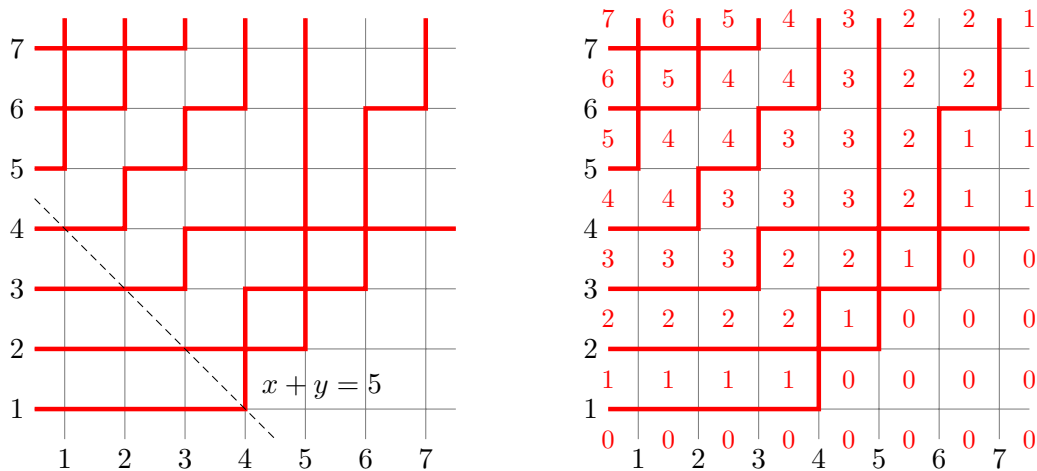
\begin{figure}[t]
        \centering
        \setlength{\tabcolsep}{2em}
        \begin{tabular}{cc}
        \begin{tikzpicture}[scale=.8]
            \foreach \xxx in {1,...,7}
			{
				\node[below] at (\xxx-2,-5.5) {$\xxx$};
                \node[left] at (-1.5,-6+\xxx) {$\xxx$};
			}
    		\draw[gray,step=1] (-1.5,-5.5) grid (5.5,1.5);
    						
    		\draw[red,line width=.06cm] (-1.5,1) -- (-1,1) -- (-1,1.5);
    		\draw[red,line width=.06cm] (-1.5,0) -- (-1,0) -- (-1,1) -- (0,1) -- (0,1.5);
    		\draw[red,line width=.06cm] (-1.5,-1) -- (-1,-1) -- (-1,0) -- (0,0) -- (0,1) -- (1,1) -- (1,1.5);
    		
    		\draw[red,line width=.06cm] (-1.5,-2) -- (0,-2) -- (0,-1) -- (1,-1) -- (1,0) -- (2,0) -- (2,1.5);
    		\draw[red,line width=.06cm] (-1.5,-3) -- (1,-3) -- (1,-2) -- (3,-2) -- (3,1.5);
    		\draw[red,line width=.06cm] (-1.5,-4) -- (2,-4) -- (2,-3) -- (3,-3) -- (3,-2) -- (4,-2) -- (4,0) -- (5,0) -- (5,1.5);
    		
    		\draw[red,line width=.06cm] (-1.5,-5) -- (2,-5) -- (2,-4) -- (3,-4) -- (3,-3) -- (4,-3) -- (4,-2) -- (5.5,-2);
            \draw[densely dashed] (-1.5,-1.5) --++ (4,-4) node[above,anchor=west,yshift=20,xshift=-9] {$x+y=5$};
        \end{tikzpicture}
        &
        \begin{tikzpicture}[scale=.8]
            \foreach \xxx in {1,...,7}
			{
				\node[below] at (\xxx-2,-5.5) {$\xxx$};
                \node[left] at (-1.5,-6+\xxx) {$\xxx$};
			}
    		\draw[gray,step=1] (-1.5,-5.5) grid (5.5,1.5);
    						
    		\draw[red,line width=.06cm] (-1.5,1) -- (-1,1) -- (-1,1.5);
    		\draw[red,line width=.06cm] (-1.5,0) -- (-1,0) -- (-1,1) -- (0,1) -- (0,1.5);
    		\draw[red,line width=.06cm] (-1.5,-1) -- (-1,-1) -- (-1,0) -- (0,0) -- (0,1) -- (1,1) -- (1,1.5);
    		
    		\draw[red,line width=.06cm] (-1.5,-2) -- (0,-2) -- (0,-1) -- (1,-1) -- (1,0) -- (2,0) -- (2,1.5);
    		\draw[red,line width=.06cm] (-1.5,-3) -- (1,-3) -- (1,-2) -- (3,-2) -- (3,1.5);
    		\draw[red,line width=.06cm] (-1.5,-4) -- (2,-4) -- (2,-3) -- (3,-3) -- (3,-2) -- (4,-2) -- (4,0) -- (5,0) -- (5,1.5);
    		
    		\draw[red,line width=.06cm] (-1.5,-5) -- (2,-5) -- (2,-4) -- (3,-4) -- (3,-3) -- (4,-3) -- (4,-2) -- (5.5,-2);

            \foreach \p in {(-1.5,-5.5), (-.5,-5.5), (.5,-5.5), (1.5,-5.5), (2.5,-5.5), (3.5,-5.5),(4.5,-5.5),(5.5,-5.5), (2.5,-4.5), (3.5,-4.5), (4.5,-4.5), (5.5,-4.5), (3.5,-3.5), (4.5,-3.5), (5.5,-3.5), (4.5,-2.5), (5.5,-2.5)}
            {
                \node[red] at \p {\small $0$};
            }

            \foreach \p in {(-1.5,-4.5),(-.5,-4.5),(.5,-4.5), (1.5,-4.5), (2.5,-3.5), (3.5,-2.5), (4.5,-1.5), (5.5,-1.5), (4.5,-.5), (5.5,-.5),(5.5,.5), (5.5,1.5)}
            {
                \node[red] at \p {\small $1$};
            }

            \foreach \p in {(-1.5,-3.5), (-.5,-3.5), (.5,-3.5), (1.5,-3.5), (1.5,-2.5), (2.5,-2.5), (3.5,-1.5), (3.5,-.5), (3.5,.5), (4.5,.5), (3.5,1.5), (4.5,1.5)}
            {
                \node[red] at \p {\small $2$};
            }

            \foreach \p in {(-1.5,-2.5), (-.5,-2.5),(.5,-2.5),(.5,-1.5),(1.5,-1.5),(2.5,-1.5),(1.5,-.5),(2.5,-.5),(2.5,.5),(2.5,1.5)}
            {
                \node[red] at \p {\small $3$};
            }

            \foreach \p in {(-1.5,-1.5),(-.5,-1.5),(-.5,-.5),(.5,-.5),(.5,.5),(1.5,.5),(1.5,1.5)}
            {
                \node[red] at \p {\small $4$};
            }

            \foreach \p in {(-1.5,-.5),(-.5,.5),(.5,1.5)}
            {
                \node[red] at \p {\small $5$};
            }

            \foreach \p in {(-1.5,.5),(-.5,1.5)}
            {
                \node[red] at \p {\small $6$};
            }

            \foreach \p in {(-1.5,1.5)}
            {
                \node[red] at \p {\small $7$};
            }
        \end{tikzpicture}
        \end{tabular}
            \caption{A sample of the stochastic six vertex model. In the right panel red numbers denote the height function.}
        \label{fig:s6v_conf}
    \end{figure}

The  main observable of interest in the S6V model is the height function $\h : \Lambda  \to \mathbb{Z}_{\ge 0}$ defined as follows:
\begin{align} \label{eq:height}
    \h(M,N):= \mbox{number of paths that pass through or to the right of }(M,N).
\end{align}
For the step initial condition, we have $\mathfrak{h}(M,N)\leq N$ since there is no path coming from the horizontal axis.

It was predicted in \cite{GwaSpohn1992} and later proven by Borodin, Corwin, and Gorin \cite{BCG6V} that the model belongs to the so-called Kardar-Parisi-Zhang (KPZ) universality class -- a class of models that exhibit universal scaling exponents and limiting
statistics first discovered in random matrix theory \cite{quastel_introduction_to_KPZ,CorwinKPZ,zygouras_review,ganguly2021random}. \cite{BCG6V} showed  that the height function satisfies the following convergence in probability: 
\begin{align*}
   \lim_{N\to \infty} \frac{\h(\lfloor\kappa N\rfloor,N)}{N} =\lln := \begin{dcases}
        1-\kappa, & \kappa\in (0,\aw), \\ 
        \frac{(1-\sqrt{\aw\kappa})^2}{1-\aw} & \kappa \in (\aw,\aw^{-1}), \\ 
        0 & \kappa \in (\aw^{-1},\infty).
    \end{dcases}
\end{align*}
The above convergence was upgraded to almost sure convergence in  \cite{drillick2023strong}. The choice of parameter $\kappa\in (a,a^{-1})$ is known as the \emph{liquid region} where the limit shape is curved and there \cite{BCG6V} proved that the height function has Tracy-Widom GUE fluctuations (with a negative sign) of order $N^{1/3}$. The complementary situations when parameter $\kappa\le a$ and $\kappa\ge a^{-1}$ correspond to the \emph{frozen region} where the height function is flat and one expects the fluctuations to be exponentially small. Since the work \cite{BCG6V}, there has been immense progress in understanding various aspects of this model. Connections to determinantal point processes \cite{borodin2016stochastic_MM,BO2016_ASEP}, limit shape and fluctuation theorems \cite{Amol2016Stationary,aggarwal2020limit,corwin2020stochastic,dimitrov2023two}, and boundary-induced phase transitions \cite{abphase} have been established. In a very recent breakthrough \cite{ach24}, the S6V model has been shown to converge to the directed landscape \cite{DOV18}, a universal scaling limit in the KPZ universality class.

The present paper focuses on the study of large deviations of the S6V model: the rare events where the height function 
$\h(\lfloor\kappa N\rfloor,N)$ deviates by an order $N$ from its mean $\lln N$. Interestingly, we expect different speeds for the upper and lower deviations. As $N\to \infty$ we expect
\begin{align}\tag{Upper Tail}
   & \Pr(\h(\lfloor\kappa N\rfloor,N) \le sN) \asymp e^{-N\Phi_\kappa^{(+)}(s)} & s\in [0,\lln], \\ \tag{Lower Tail} 
    & \Pr(\h(\lfloor\kappa N\rfloor,N) \ge sN) \asymp e^{-N^2\Phi_\kappa^{(-)}(s)} & s\in [\lln,1].
\end{align}
Since $-\h(\lfloor\kappa N\rfloor,N)$ converges to the Tracy-Widom GUE after centering and scaling, we refer to $\{\h(\lfloor\kappa N\rfloor,N) \le sN\}$ as the upper tail and $\{\h(\lfloor\kappa N\rfloor,N) \ge sN\}$ as the lower tail, aligning with the existing literature. The asymmetry above in the speeds, $N$ versus $N^2$, is a hallmark of the KPZ universality class.  Heuristically, this can be understood from the path configurations of the S6V model. To achieve a large height at a given point, all paths need to stay low, whereas a small height can be attained if the lowest path trends upward.

We focus on the lower tail large deviations which is arguably more challenging and often requires more involved machineries. Our first main result in this direction shows that the lower-tail probabilities satisfy certain weak log-concavity.

\begin{theorem}\label{t.main0}
    Fix any $M,N \ge 8$ and $r_1, r_2\ge 0$. Then, we have
    \begin{align} \label{eq:weaklcasepbdintro}
        \Pr\big(\h(M,N) \ge  r_1\big)\Pr\big(\h(M,N) \ge  r_2\big) \le N^2\mathsf{C}^{(MN)^{4/5}} \left[\Pr\Big(\h(M,N) \ge \frac{r_1+r_2}{2}-4(MN)^{2/5}\Big)\right]^2,
    \end{align}
    where $\mathsf{C}:=\aw^{-2}q^{-2}(1-\aw)^{-2}(1-q)^{-2}(1-\aw q)^{4}$.
\end{theorem}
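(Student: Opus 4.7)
The plan is to prove the inequality via a combinatorial swap argument applied to two independent copies of the S6V model. Let $\omega^{(1)},\omega^{(2)}$ denote independent samples from the S6V measure, with corresponding height functions $\h^{(1)},\h^{(2)}$. The left-hand side of \eqref{eq:weaklcasepbdintro} is then the product-measure probability
\[
(\Pr\otimes\Pr)\bigl(\h^{(1)}(M,N)\ge r_1,\; \h^{(2)}(M,N)\ge r_2\bigr),
\]
and the goal reduces to constructing a partial, finite-to-one measurable map $\Phi:(\omega^{(1)},\omega^{(2)})\mapsto (\tilde\omega^{(1)},\tilde\omega^{(2)})$ defined on the above event, such that both target heights $\tilde\h^{(i)}(M,N)$ are at least $(r_1+r_2)/2-4(MN)^{2/5}$, that the weight ratio $w(\omega^{(1)})w(\omega^{(2)})/w(\tilde\omega^{(1)})w(\tilde\omega^{(2)})$ is uniformly bounded by $\mathsf{C}^{(MN)^{7/8}}$, and that the multiplicity of $\Phi$ is at most $N^2$. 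Summing over preimages and using these three properties yields \eqref{eq:weaklcasepbdintro} directly.

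To construct $\Phi$ I would first assume without loss of generality that $\h^{(1)}\ge\h^{(2)}$, so that the discrepancy $\delta:=\h^{(1)}-\h^{(2)}$ is a nonnegative integer. Enumerating the possible values of $\delta\in\{0,1,\dots,N\}$ yields one of the factors of $N$ in the multiplicity; the other arises from a similar count over the spatial location of the region being swapped. On each such stratum I would identify a diagonal strip of width $K=\Theta((MN)^{7/8})$ situated near $(M,N)$ along which roughly $\lfloor\delta/2\rfloor$ paths from $\omega^{(1)}$ can be interchanged with paths from $\omega^{(2)}$. The combinatorial content here is that S6V path ensembles restricted to such a strip admit local reconfiguration moves which modify only the six-vertex configuration strictly inside the strip; after the exchange, $\tilde\h^{(1)}(M,N)=\h^{(1)}(M,N)-\lfloor\delta/2\rfloor$ and $\tilde\h^{(2)}(M,N)=\h^{(2)}(M,N)+\lfloor\delta/2\rfloor$, so both values lie within $4(MN)^{2/5}$ of $(r_1+r_2)/2$.

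The weight-cost estimate follows from the uniform bound on vertex weights: each of the six local configurations shown in \Cref{fig:vertex weights} carries weight in a bounded ratio controlled by $\mathsf{C}=a^{-1}q^{-1}(1-a)^{-2}(1-q)^{-2}$, so altering the configuration at $K$ vertices changes the overall weight by at most $\mathsf{C}^K=\mathsf{C}^{(MN)^{7/8}}$. Combined with the $N^2$ multiplicity, this yields the claimed inequality. The lower-bound hypothesis $M,N\ge 8$ is used only to ensure that $K$ is smaller than the area $MN$ so the swap strip fits inside the rectangle.

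The main obstacle will be producing the swap map $\Phi$ with all the required features simultaneously: establishing that a strip of width $O((MN)^{7/8})$ near $(M,N)$ can, on a set of sufficient measure, accommodate the exchange of $\lfloor\delta/2\rfloor$ paths; verifying that the reconfiguration can be realized through admissible local moves while keeping the number of altered vertices below $(MN)^{7/8}$; and controlling the residual height error at $(M,N)$ by $4(MN)^{2/5}$ via a Chernoff-type concentration bound across the strip. It is precisely the balancing of the weight-ratio cost $\mathsf{C}^K$ against the concentration error that dictates the specific exponents $7/8$ and $2/5$ appearing in the statement.
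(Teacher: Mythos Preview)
Your high-level framework---an injection on pairs of configurations that brings the two heights at $(M,N)$ together while changing few vertex weights---is correct and matches the paper. However, the specific mechanism you sketch diverges from the paper's in ways that leave real gaps.

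First, the swap in the paper is not a path exchange in a ``diagonal strip.'' The construction works entirely with height functions: given $h,h'\in\mathscr{H}_0$ with $h(p)=r<r'=h'(p)$, one shifts $h$ up by an integer $k$, then swaps $\tau_k(h)$ and $h'$ on the connected component $R_p(\tau_k(h)\neq h')$ through $p$ of the set where they differ, then shifts back down. The swapped pair lies in $\mathscr{H}_0$ because the boundary conditions force $R_p$ to stay away from the axes when $0\le k\le r'-r$. The weight mismatch is localized precisely to $\partial R_p\cap\Lambda_{M,N}$, giving the factor $\mathsf{C}^{|\partial R_p\cap\Lambda_{M,N}|}$. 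Your ``strip of width $(MN)^{7/8}$'' does not capture this: the region $R_p$ is a level-set geometry with no a priori shape control.

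Second, and more seriously, the exponents $7/8$ and $2/5$ do \emph{not} come from any probabilistic concentration. The argument is a deterministic pigeonhole: one proves that the boundaries $\{\partial R_p(\tau_{2k}(h)\neq h')\}_{k}$ are pairwise disjoint subsets of $\Lambda_{M,N}$, so summing $|\partial R_p(\tau_k(h)\neq h')\cap\Lambda_{M,N}|$ over roughly $(MN)^{2/5}$ consecutive values of $k$ gives at most $2MN$. Hence some $k^*$ in the window $[\lceil(r'-r)/2\rceil,\,\lceil(r'-r)/2\rceil+(MN)^{2/5}]$ has boundary size $\le (MN)^{7/8}$. This simultaneously bounds the weight cost and pins $\widetilde h(p),\widetilde h'(p)$ within $4(MN)^{2/5}$ of $(r+r')/2$. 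Your proposed ``Chernoff-type concentration bound across the strip'' has no role here and would not produce a statement holding for \emph{every} pair $(h,h')$, which is what the injection requires.

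Finally, the factor $N^2$ arises differently: the injection argument first yields a pointwise bound $\Pr(\h=r)\Pr(\h=r')\le \mathsf{C}^{(MN)^{7/8}}\Pr(|\h-(r+r')/2|\le 4(MN)^{2/5})^2$; one then writes $\Pr(\h\ge r_i)=\sum_{k_i=\lceil r_i\rceil}^N\Pr(\h=k_i)$ and bounds each of the at most $N^2$ summands by the largest, using monotonicity of the tail. It is not a multiplicity count of the map.
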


The exponents $4/5$ and $2/5$ are not optimal. Note that in the lower-tail large deviation regime, i.e., when $M=\lfloor \kappa N\rfloor$ and $r_1,r_2>\lln N$, the probabilities appearing above are of order $e^{-O(N^2)}$ and thus the prefactor $N^2\mathsf{C}^{(MN)^{4/5}}$ is negligible. 

\medskip

\Cref{t.main0} is a key ingredient in proving a lower tail Large Deviation Principle (LDP) for the stochastic six-vertex model stated below.

\begin{theorem}\label{thm.main} For all $s\in [\lln,1]$ we have
    \begin{align}\label{e.main}
        \lim_{N\to \infty}-\frac1{N^2}\log\Pr(\h(\lfloor \kappa N \rfloor,N) \ge sN) = \Phi_\kappa^{(-)}(s).
    \end{align}
    $\Phi_\kappa^{(-)}$ is a non-decreasing, non-negative convex function with $\Phi_\kappa^{(-)}(\lln)=0$ and $\Phi_\kappa^{(-)}(1)=\kappa \log\frac{1-aq}{1-a}$. It has the following variational form
    \begin{align}\label{defphi}
        \Phi_\kappa^{(-)}(s):=\sup_{y\in \R} \left[\mathcal{F}_\kappa(y)-(\log q^{-1})\frac{(s-y)^2}{2}\right]
    \end{align}
    where $\mathcal{F}_\kappa(y)$ is a certain energy integral explained in \Cref{sec.rate}
\end{theorem}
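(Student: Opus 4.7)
My plan is to follow the three-stage program outlined in the abstract. First, I would invoke the Borodin--Olshanski distributional identity from \cite{BO2016_ASEP}, which expresses the tail probability $\Pr(\h(M,N)+G\ge r)$, with $G$ an independent nonnegative integer-valued shift of $q$-Gaussian/theta type, as a multiplicative functional of a random partition $\lambda$ drawn from a Schur measure whose principal specializations consist of $N$ copies of $u^{-1}$ and $M-1$ copies of $q^{-1/2}$. Under the scaling $M=\lfloor\alpha N\rfloor$ the shift satisfies $G/N\to 0$ in probability, while $G$ itself obeys an LDP at speed $N^2$ with Gaussian rate $(\log q^{-1})y^2/2$ on $[0,\infty)$.

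Next, I would establish an LDP for the shifted tail using potential theory. Under the Schur measure above, the rescaled particle configuration $\{(\lambda_i+N-i)/N\}$ concentrates around a deterministic equilibrium measure minimizing a logarithmic energy functional, and the multiplicative functional encoding $\{\h+G\ge sN\}$ amounts to imposing a one-sided restriction at height $s$ on the admissible profiles. A steepest-descent analysis of the contour-integral representation of the multiplicative functional (equivalently, solving the constrained variational problem) then produces the energy integral $\mathcal{F}_\alpha$ of \Cref{sec.rate} and yields
\begin{equation*}
    \lim_{N\to\infty}-\frac{1}{N^2}\log\Pr\bigl(\h(\lfloor\alpha N\rfloor,N)+G\ge sN\bigr)=\mathcal{F}_\alpha(s).
\end{equation*}

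The main obstacle is the final deconvolution step. By independence of $G$ and $\h$ the law of $\h+G$ is the convolution of the two laws, and a formal Varadhan/Laplace computation at speed $N^2$ predicts the infimal convolution identity
\begin{equation*}
    \mathcal{F}_\alpha(s)=\inf_{y\in\R}\left[\Phi_\alpha^{(-)}(s-y)+(\log q^{-1})\tfrac{y^2}{2}\right],
\end{equation*}
which Legendre-inverts against the parabola to give \eqref{defphi}. Making this rigorous is delicate: the upper bound $\Pr(\h\ge sN)\le \Pr(\h+G\ge sN)/\Pr(G=0)$ is immediate, but the matching lower bound has to rule out sharp oscillations of $r\mapsto\Pr(\h\ge r)$ that convolution with $G$ could smooth out. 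This is exactly the purpose of \Cref{t.main0}: the weak log-concavity lower-bounds the midpoint tail $\Pr(\h\ge(r_1+r_2)/2-4(MN)^{2/5})$ in terms of $\Pr(\h\ge r_i)$ with a subexponential prefactor $N^2\mathsf{C}^{(MN)^{7/8}}$, negligible on the $N^2$ scale, and this can be plugged into the deconvolution scheme of \cite{das2023large} to transfer the LDP from $\h+G$ to $\h$. Finally, the qualitative properties of $\Phi_\alpha^{(-)}$ stated in the theorem (convexity, monotonicity, vanishing at $\lln$, and the endpoint value $\alpha\log\frac{1-aq}{1-a}$ at $s=1$) follow from the variational formula together with a direct analysis of $\mathcal{F}_\alpha$ near the endpoints of $[\lln,1]$.
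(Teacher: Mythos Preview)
Your outline follows the same three-stage program as the paper, and the broad structure---identity from \cite{BO2016_ASEP}, potential-theoretic LDP for the shifted height with rate $\mathcal{F}_\alpha$, then deconvolution via \Cref{t.main0} and the scheme of \cite{das2023large}---is correct. However, you are missing one of the two key ingredients that make the deconvolution step go through.

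The deconvolution lemma from \cite{das2023large} requires, among its hypotheses, that the pre-limit rate functions $\Phi_{\alpha,N}(s):=-N^{-2}\log\Pr(\h(\lfloor\alpha N\rfloor,N)\ge sN)$ be \emph{equicontinuous} on $[\lln,1]$. Weak log-concavity (\Cref{t.main0}) gives approximate midpoint convexity, which shows that subsequential limits are convex and lets one propagate continuity at a single point into equicontinuity everywhere, but it does not by itself manufacture continuity at any point. The paper obtains the needed anchor by proving an explicit parabolic formula for $\mathcal{F}_\alpha$ at large arguments: $\mathcal{F}_\alpha(s)=(\log q^{-1})(s-1)^2/2+\alpha\log\frac{1-aq}{1-a}$ for all sufficiently large $s$. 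Since the constant term equals $\Phi_{\alpha,N}(1)$ exactly, comparing this with the infimal-convolution relation $\mathcal{F}_\alpha=\lim_N(g\oplus\Phi_{\alpha,N})$ forces approximate continuity of $\Phi_{\alpha,N}$ at the endpoint $s=1$; the weak convexity then iterates this to equicontinuity on the whole interval. This parabolic computation is a substantial piece of work---it requires computing the equilibrium measure and energy for the potential $\mathpzc{V}_\infty$ explicitly---and without it your argument does not close: you cannot invoke Arzel\`a--Ascoli, so you have no mechanism to show that $\Phi_{\alpha,N}$ converges at all, let alone to the deconvolution $\mathcal{F}_\alpha\ominus g$.

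Two minor inaccuracies: the random shift is not nonnegative---in the paper's notation it is $-\chi-S$ with $S$ a two-sided discrete theta variable, so the parabola $g(y)=(\log q^{-1})y^2/2$ lives on all of $\R$; and the relevant Schur measure is the $z$-measure $\mathfrak{M}(a;N,M-1)$, analyzed via discrete log-gas and potential theory rather than a contour-integral steepest descent.
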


While we do not address the upper tail LDP in this paper, it should be achievable using a perturbative analysis similar to \cite{dt21,lin2020kpz,dz1,das2023large}, leveraging the Fredholm determinant formula available for the S6V model \cite{borodin2016stochastic_MM,BO2016_ASEP}.

The lower tail case, on the other hand, is more subtle; perturbative analysis of the Fredholm determinant is no longer feasible. In this context, different techniques using connections to determinantal point processes have proven fruitful. This connection goes back to the work of Borodin and Olshanski \cite{BO2016_ASEP}, where they observed that the $q$-Laplace transform of the height function of the S6V model can be viewed as an expectation of a certain multiplicative functional under Schur measures. See also \cite{borodin2016stochastic_MM} for more general moment matching formulas between higher-spin S6V models and Macdonald measures.
It was first noted in \cite{lwtail} that it is worthwhile to analyze such multiplicative functionals to obtain lower tail estimates of the observables. This approach was carried out in \cite{lwtail} for the KPZ equation introduced in \cite{KPZ1986}, where the $q$-Laplace transform becomes the usual Laplace transform and the underlying point process is the $\mathrm{Airy}_2$ point process. Such analysis is by no means trivial, and a host of sophisticated techniques, ranging from the analysis of the stochastic Airy operator to Riemann-Hilbert problems, have been developed to analyze the underlying functional of the $\mathrm{Airy}_2$ point process \cite{tsai_lower_tail,cafasso_claeys_KPZ}.

The approach we take in the current paper is also based on the connection from \cite{borodin2016stochastic_MM,BO2016_ASEP} where the underlying point process is the Meixner ensemble. While the applicability of the aforementioned techniques to the Meixner ensemble is yet to be explored, the analysis of S6V model presents another challenge due to the presence of the $q$-Laplace transform instead of the usual Laplace transform. Loosely speaking, the $q$-Laplace transform of the height function observable is related to the lower tail probability of a randomly shifted height function. The shift has Gaussian tails and has a non-trivial effect in the rate function. In essence, the lower-tail rate functions of the shifted and unshifted height function \textit{are not the same} (unlike in the case of the KPZ equation). 

For the $q$-PNG model, where one faces similar challenges, we proposed a strategy in our earlier work \cite{das2023large} to negate the effect of this shift. This approach crucially uses the fact that the tail probabilities of the $q$-PNG height function (after a shift) are log-concave, stemming from the log-concavity of the Schur polynomials \cite{Okounkov1997,Lam_Postnikov_Pylyavskyy_concavity}. {In the present paper the same idea cannot be applied. In fact, while there still exists a precise connection between S6V and {periodic Schur measures}, guaranteed by a result of \cite{IMS_KPZ_free_fermions}, the latter in this instance are not non-negative. Therefore their log-concavity cannot be leveraged to deduce probabilistic properties of the S6V height function.

To bypass this problem we devise an alternative approach, that proves \Cref{t.main0} without relying on special algebraic structures which are consequence of the integrability of the S6V. The proof, which is outlined in \Cref{subs:convexity intro} and fully fleshed out in \Cref{sec:ltconvex}, can be thought as a multi-dimensional variant of the Gessel-Viennot involution \cite{gessel1985binomial}, which is an elegant and simple operation commonly seen in algebraic combinatorics. Despite the inspiration coming from algebraic combinatorics, our approach primarily depends on the Gibbsian structure of the growth process. It is amenable to broad generalizations, including Gibbsian growth processes of a more general nature in arbitrary dimensions -- ideas which we hope to pursue further in future works.}

As already pointed out the bound \eqref{eq:weaklcasepbdintro} is not optimal since it includes (a) an exponentially diverging factor in the right hand side and (b) a sublinearly diverging shift in the argument of the right hand side probability. A valid question, to which we have no answer, is if such diverging terms can be abated or removed all together. Such a question connects the world of integrable probability with that of the raising trend in algebraic combinatorics of studying log-concavity of algebraic objects such as multiplicities of representations or special symmetric polynomials \cite{Okounkov1997,Okounkov2003_why_would,huh_et_al_schur_log_concavity}. In fact, it is known \cite{BorodinBufetovWheeler2016} that certain multi-point distributions of the height function of the S6V is described by the Hall-Littlewood polynomials, which are one parameter deformations of the Schur polynomials. A pure log-concavity result for the tails of the height function $\mathfrak{h}$, would correspond to a log-concavity result for the Hall-Littlewood polynomials, which indeed appears as a challenging though enticing direction to pursue (see also \cite{BufetovGorinLogConcavity} for similar considerations).

Soon after our work, \cite{bkm24} proved several log-concavity results for a class of one dimensional log-gases and of Tracy-Widom type laws (see also \cite{bls17}). Although there is no overlap between results of this paper and \cite{bkm24}, the construction we present could be adapted, in the case $q=0$, to give an alternative construction of some of their results, including log concavity of Tracy Widom distributions.

{Recently, a variety of probabilistic techniques have emerged, successfully yielding lower tail estimates in the moderate deviations (MD) regime for models within the KPZ class \cite{aggarwal2023asep,ganguly_hegde2023optimal,ganguly2022sharp,landon2022tail,landon2023upper,corwin_hegde2022lower,landon2023tail,gs24,hindy24}. The work of \cite{landon2023tail} derives tail estimates in the MD regime for the height function of the stationary S6V model using coupling arguments. For the step initial condition, S6V tail estimates in the MD regime appear in the forthcoming works \cite{hindy24,gs24}. Both papers start from the same identity from \cite{BO2016_ASEP} that we use, which relates the $q$-Laplace transform to a multiplicative statistic of the holes of the Meixner ensemble. \cite{hindy24} uses determinantal techniques to derive tail estimates for the position of the smallest hole in the Meixner ensemble and thereby deduces tail estimates of $\h(\lfloor \kappa N\rfloor,N)$ in the MD regime. \cite{gs24}, on the other hand, uses Riemann-Hilbert problem machineries to derive precise asymptotic behaviors of these multiplicative statistics in the MD regime. It would be interesting to see how much of these techniques can be extended to the large deviation regime.}

Techniques we develop in this paper could, in principle, lead to establish large deviation principle for the Asymmetric Simple Exclusion Process (ASEP), which arises as a scaling limit of the S6V. Under such scaling limit the connection between the height function of the ASEP and a multiplicative functional of a determinantal point process survives, the latter being the discrete Laguerre ensemble, which is a certain limit of the Meixner ensemble at the hard edge. For this, two approaches appear to be natural: constructing a coupling between the S6V and the ASEP strong enough to capture rare events or alternatively study the large deviations of the discrete Laguerre ensemble. Each of these approaches present issues one needs to resolve. Couplings between S6V and ASEP which have been produced in literature (such as \cite{Aggarwal_6v_to_ASEP,ach24}), while useful to control fluctuations, do not seem to be suitable to control rare events. Thus some of their modifications should be considered. On the other hand, establishing a large deviation principle of the discrete Laguerre ensemble, which is not a log gas, appears to require the use of more sophisticated techniques, e.g., the Riemann-Hilbert techniques, instead of simpler Laplace-type arguments. We leave this for future work.

We end our discussion by mentioning few works related to our paper. For zero temperature models such as the totally asymmetric simple exclusion process, one-point LDPs were obtained using various techniques in \cite{logan_shepp1977variational,sep98a,seppalainen_98_increasing,deuschel_zeitouni_1999,johansson2000shape} and process-level LDPs were studied in \cite{jen00,var04,ot19,qt21}. The lower-tail LDP for the first passage percolation was established in \cite{basu2021upper} at the one-point level and recently in \cite{verges2024large} at the metric level. For the universal scaling limit, a metric level LDP for the directed landscape was recently proven in \cite{das2024upper}, leading to a process-level LDP for the $\mathrm{Airy}_2$ process in \cite{das2024solving}. Besides the above mentioned papers, one-point tails for the KPZ equation were also studied in the physics works \cite{ledoussal16long,ledoussal16short,krajenbrink17short,sasorov2017large,corwin2018coulomb,krajenbrink18half,krajenbrink18simple,krajenbrink2018systematic} and in the mathematics works \cite{dt21,kim21,corwin20general,gl20}, and process-level large deviations were studied in the physics works \cite{kolokolov07,kolokolov09,meerson16,meerson17,meerson18,kamenev16,hartmann19,krajenbrink21,krajenbrink22flat,krajenbrink23} and the mathematics works \cite{lin21,gaudreaulamarre2023kpz,lin22,ganguly2022sharp,ganguly2023brownian,lin23,tsai2023high}.

\subsubsection{Description of the rate function} \label{sec.rate}

To describe $\mathcal{F}_\kappa$ appearing in \eqref{defphi}, we need some basic notions and results from potential theory. For context and significance on potential theory, we refer to the excellent monograph of Saff and Totik \cite{saff1997logarithmic}.

Given a continuous function $V: [0,\infty)\to \mathbb{R}$ (often called external field or potential) satisfying $V(x) \ge 2\log(1+x^2)$ for large enough $x$, and  a probability measure on $\mu\in (\mathbb{R}_+,\mathcal{B}(\mathbb{R}_+))$, we define the logarithmic energy integral
\begin{align}\label{e.IVmu}
    I_V(\mu):=\iint_{x\neq y} k_V(x,y)\diff \mu(x) \diff \mu(y), \qquad k_V(x,y):=-\log|x-y|+\frac12V(x)+\frac12V(y).
\end{align}
For $s\in (0,\infty]$, let $\mathcal{A}_s$ denote the set of all $\phi\in L^1([0,s])$ satisfying
\begin{align}\label{cala}
0 \le \phi(x) \le 1 \mbox{ for Lebesgue a.e.}~x\in [0,s], \mbox{ and }\int_0^s \phi(x)\diff x=1.
\end{align}
Given a $\phi\in \mathcal{A}_{s}$, we will denote $\mu_{\phi} \in ([0,s],\mathcal{B}([0,s]))$ to be the probability measure with density $\phi$. 

One of the fundamental problem of potential theory is to minimize the energy integral over all possible measures. We quote a classical result from \cite{ds97} in this direction: There exists a unique $\phi_V \in \mathcal{A}_\infty$ with compact support such that
\begin{align}\label{l.ds97}
    \inf_{\phi \in \mathcal{A}_\infty} I_V(\mu_\phi)=I_V(\mu_{\phi_V})=:F_V \in \R. 
\end{align}
$\mu_{\phi_V}$ and $F_V$ are known as the equilibrium measure and the equilibrium energy for the potential $V$. 

\medskip

We now describe $\mathcal{F}_\kappa$ appearing in \eqref{defphi}. For $y\in (-\infty,\infty]$ and $\kappa\ge 1$ we define the external field:
\begin{equation}\label{def.vs}
\begin{aligned}
    \mathpzc{V}_y(x) & =(\log a^{-1}) x+(\log q^{-1})\min(x,y) -(\kappa-1)[\log(x+\kappa-1)-1]+x\log \frac{x}{x+\kappa-1}.
\end{aligned}
\end{equation}
 We have suppressed the dependency on $\kappa$ above. For $\kappa\ge 1$, define
\begin{align}
    \label{def:fs}
     \mathcal{F}_\kappa(y):=(\log q^{-1})\frac{y^2-2y}{2}-\kappa\log (1-a)+\frac{\log(a)}{2}+C_{\kappa}+F_\kappa(y),
\end{align}
where $F_\kappa(y):=F_{\mathpzc{V}_y}$ comes from \eqref{l.ds97} (with $V\mapsto \mathpzc{V}_y$) and 
\begin{align}\label{def.ca}
    C_{\kappa}:=-\frac{(\kappa -1)^2 \log (\kappa -1) -\kappa^2  \log (\kappa )+3\kappa}{2}.
\end{align}
When $\kappa\in (0,1)$ define
\begin{equation}\label{def:fs2}
   \begin{aligned}
    \mathcal{F}_\kappa(y) & :=(\log q^{-1})\frac{y^2-2y\kappa+2\kappa-2\kappa^2}{2}-(\log q^{-1})\int_{\kappa}^1 (y+x-1)_+\diff x \\ & \hspace{2cm}-\kappa\log (1-a)+\kappa^2\frac{\log(a)}{2}+\kappa^2C_{\kappa^{-1}}+\kappa^2 F_{\kappa^{-1}}(y\kappa^{-1}-\kappa^{-1}+1).
\end{aligned} 
\end{equation}

The following theorem summarizes the properties of $\mathcal{F}_\kappa$.

\begin{theorem}\label{thm.main2} The function $\mathcal{F}_\kappa$ is a non-decreasing, non-negative, convex, and continuously differentiable function with derivative being $(\log q^{-1})$-Lipschitz. We have $\mathcal{F}_\kappa(\lln)=0$ and there exists $x_0(a,q,\kappa)>0$ such that for all $x\ge x_0$
\begin{align}\label{parab}
    \mathcal{F}_\kappa(x)=(\log q^{-1})\frac{(x-1)^2}{2}+\kappa \log \frac{1-aq}{1-a}.
\end{align}
\end{theorem}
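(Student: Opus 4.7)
My strategy is to extract all the regularity and shape information of $\mathcal{F}_\alpha$ from a potential-theoretic analysis of the one-parameter family of equilibrium measures $\{\mu^*_y\}_{y\in\R}$ associated with the potentials $\{\mathpzc{V}_y\}$, combined with the envelope theorem.

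\emph{Step 1 (Reduction to $\alpha \ge 1$).} Comparing \eqref{def:fs} and \eqref{def:fs2}, the function $\mathcal{F}_\alpha$ in the regime $\alpha\in(0,1)$ is obtained from $\mathcal{F}_{\alpha^{-1}}$ by an affine rescaling of the argument $y\mapsto y\alpha^{-1}-\alpha^{-1}+1$ and of the value (multiplication by $\alpha^2$), augmented by a quadratic polynomial in $y$ with second derivative $(\log q^{-1})$. A direct check shows that monotonicity, non-negativity, convexity, the $(\log q^{-1})$-Lipschitz bound on the derivative, the vanishing at $\mu_\alpha$, and the parabolic tail are all preserved under this operation (using that $\mu_\alpha$ and $\mu_{\alpha^{-1}}$ match up via the rescaling). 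Hence it suffices to prove the theorem for $\alpha\ge 1$, which I assume from now on.

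\emph{Step 2 (First variation).} Since $I_V(\mu)=\iint -\log|x-z|\diff\mu(x)\diff\mu(z)+\int V\diff\mu$ is affine in $V$, $F_V=\inf_\mu I_V(\mu)$ is concave in $V$. As $\mathpzc{V}_y(x)$ depends on $y$ only through the concave-in-$y$ function $(\log q^{-1})\min(x,y)$, it follows that $F_\alpha(y):=F_{\mathpzc{V}_y}$ is concave in $y$. An envelope-theorem calculation (differentiating $I_{\mathpzc{V}_y}(\mu^*_y)$ in $y$ and exploiting optimality of $\mu^*_y$) yields
\begin{align*}
F_\alpha'(y)=(\log q^{-1})\,\mu^*_y\bigl((y,\infty)\bigr)\in [0,\log q^{-1}].
\end{align*}
Consequently, from \eqref{def:fs},
\begin{align*}
\mathcal{F}_\alpha'(y)=(\log q^{-1})\bigl[y-\mu^*_y([0,y])\bigr].
\end{align*}

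\emph{Step 3 (Monotonicity, non-negativity, and value at $\mu_\alpha$).} The first-order condition $\mathcal{F}_\alpha'(y)=0$ becomes the fixed-point equation $y=\mu^*_y([0,y])$, which characterizes the typical macroscopic height fraction and so has the unique solution $y=\mu_\alpha$. I will verify this by identifying $\mu^*_{\mu_\alpha}$ with the explicit Marchenko--Pastur-type limit shape density inherited from the Meixner ensemble via the identity of \cite{BO2016_ASEP}; plugging this density into $I_{\mathpzc{V}_{\mu_\alpha}}$ and unwinding the constants (using the explicit $C_\alpha$ from \eqref{def.ca}) produces a telescoping cancellation that yields $\mathcal{F}_\alpha(\mu_\alpha)=0$. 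Once this is in hand, convexity (Step 4) plus $\mathcal{F}_\alpha'(\mu_\alpha)=0$ give both monotonicity on $[\mu_\alpha,\infty)$ and non-negativity everywhere.

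\emph{Step 4 (Convexity and Lipschitz derivative).} The concavity of $F_\alpha$ from Step 2 gives $F_\alpha''(y)\le 0$, hence $\mathcal{F}_\alpha''(y)\le \log q^{-1}$, i.e.\ the $(\log q^{-1})$-Lipschitz bound on $\mathcal{F}_\alpha'$. For the lower bound $\mathcal{F}_\alpha''(y)\ge 0$, equivalently $\frac{d}{dy}\mu^*_y([0,y])\le 1$, I will use that the variational problem \eqref{l.ds97} is set on $\mathcal{A}_\infty$ (which carries the constraint $\phi\le 1$), so the equilibrium density satisfies $\phi^*_y\le 1$, and combine this density bound with a Dini-type analysis of how $\mu^*_y$ redistributes when the kink in the potential at $x=y$ is translated. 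The bound $\frac{d}{dy}\mu^*_y([0,y])\le 1$ then comes from absorbing the boundary contribution $\phi^*_y(y)\le 1$ plus the fact that the internal redistribution keeps mass below $y$ from growing faster than the kink motion.

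\emph{Step 5 (Parabolic tail).} For $y$ large enough that $\mathrm{supp}(\mu^*_y)\subset [0,y]$, the identity $\min(x,y)=x$ holds on the support, so $\mathpzc{V}_y$ coincides on the support with a $y$-independent potential and thus $\mu^*_y$ and $F_\alpha(y)$ are constant in $y$ on $[x_0,\infty)$ for some $x_0=x_0(a,q,\alpha)>0$. Substituting into \eqref{def:fs} and completing the square produces
\begin{align*}
\mathcal{F}_\alpha(y)=(\log q^{-1})\frac{(y-1)^2}{2}+\text{constant}.
\end{align*}
The constant is identified as $\alpha\log\frac{1-aq}{1-a}$ by evaluating the $y$-independent equilibrium energy of the limiting potential and using the closed form of $C_\alpha$.

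\emph{Main obstacle.} The most delicate point is Step 4: establishing $\frac{d}{dy}\mu^*_y([0,y])\le 1$ requires carefully leveraging the constraint $\phi^*_y\le 1$ and differentiating through the constrained Euler--Lagrange equations. Step 3 is also technically involved since identifying the equilibrium measure in closed form at $y=\mu_\alpha$ and computing its energy requires integrating against a Marchenko--Pastur-type density; however, the connection to the Meixner limit shape from \cite{BO2016_ASEP} should provide the required explicit form.
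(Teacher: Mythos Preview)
Your approach is genuinely different from the paper's, and it is worth understanding both what you gain and where your sketch breaks down.

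\textbf{How the paper proceeds.} The paper does \emph{not} prove convexity or the Lipschitz bound on $\mathcal{F}_\alpha'$ directly from the potential-theoretic formula \eqref{def:fs}. Instead it goes through the probabilistic identification: by \Cref{p.multldp} and \Cref{prop:f_limit_g+T}, $\mathcal{F}_\alpha=\lim_N(g\oplus\Phi_{\alpha,N})$, where $\Phi_{\alpha,N}$ is the pre-limit S6V rate function. Convexity of $\mathcal{F}_\alpha$ (\Cref{prop:fcon}) is then inherited from the approximate midpoint convexity of $\Phi_{\alpha,N}$, which in turn is the content of the weak log-concavity \Cref{t.main0} proven by the combinatorial injection in Section~2. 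Differentiability and the $(\log q^{-1})$-Lipschitz bound on $\mathcal{F}_\alpha'$ come for free once $\mathcal{F}_\alpha=g\oplus h$ with $h$ convex, via the deconvolution \Cref{lem:deconv1}. Non-negativity, monotonicity, and $\mathcal{F}_\alpha(\mu_\alpha)=0$ are read off directly from the probabilistic representation (\Cref{l.fha}), not from any equilibrium-measure computation at $y=\mu_\alpha$. Only your Step~5 matches the paper (\Cref{p.energystab}).

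\textbf{What you do differently, and the gap.} Your Steps~2 and the upper half of Step~4 are clean: concavity of $y\mapsto F_\alpha(y)$ as an infimum of concave-in-$y$ functionals, and the envelope formula $F_\alpha'(y)=(\log q^{-1})\mu^*_y((y,\infty))$, immediately give $\mathcal{F}_\alpha''\le\log q^{-1}$. This is actually a more direct route to the Lipschitz upper bound than the paper's, and it would be a nice complement. The serious gap is the convexity half of Step~4: the inequality $\tfrac{d}{dy}\mu^*_y([0,y])\le 1$ is asserted to follow from ``$\phi^*_y\le1$ plus a Dini-type analysis of how $\mu^*_y$ redistributes,'' but no mechanism is given. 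The constraint $\phi\le1$ bounds the boundary contribution $\phi^*_y(y)$ by $1$, yet the bulk of the equilibrium measure also moves with $y$, and there is no a priori reason the internal redistribution term is nonpositive. Controlling $\partial_y\mu^*_y$ for a constrained equilibrium problem with a moving kink in the potential is a genuine PDE/free-boundary question; the paper's entire Section~2 exists precisely because the authors could not (or chose not to) extract convexity from the energy formula alone. Your Step~3 is similarly more laborious than necessary: computing the equilibrium measure and its energy exactly at $y=\mu_\alpha$ and watching constants cancel is avoidable once you have the probabilistic representation, where $\mathcal{F}_\alpha(s)=0$ for $s<\mu_\alpha$ is immediate from the law of large numbers. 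In short: your Steps~2 and~5 are sound and partly sharper than the paper; Step~4 as written is a gap, and the paper fills it by an entirely different (combinatorial/probabilistic) argument rather than by potential theory.
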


\subsection{Proof ideas.} \label{sec:pfidea} In this section we discuss the key ideas behind the proof of our main theorems.

\begin{figure}
\setlength{\tabcolsep}{2em}
\begin{tabular}{cc}
  \includegraphics[width=60mm]{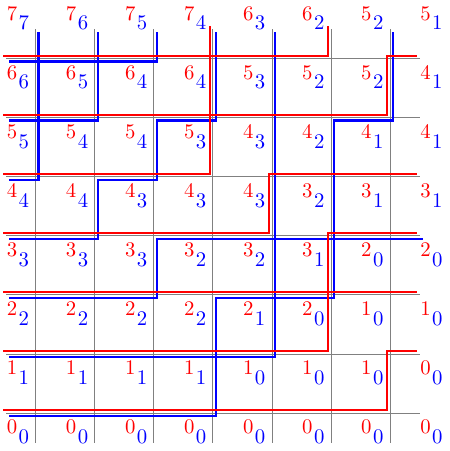} &    \includegraphics[width=60mm]{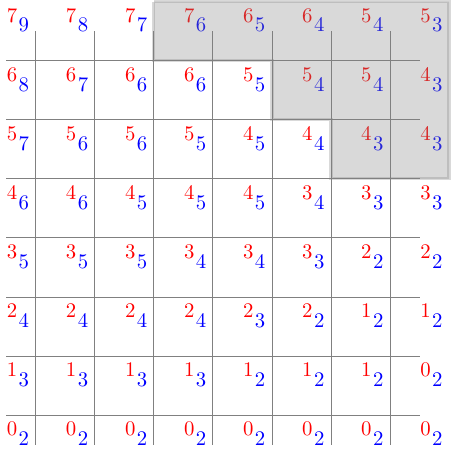} \\
(a) & (b) \\[6pt]
 \includegraphics[width=60mm]{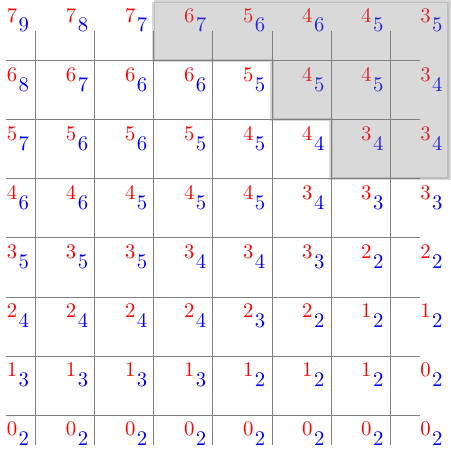} &   \includegraphics[width=60mm]{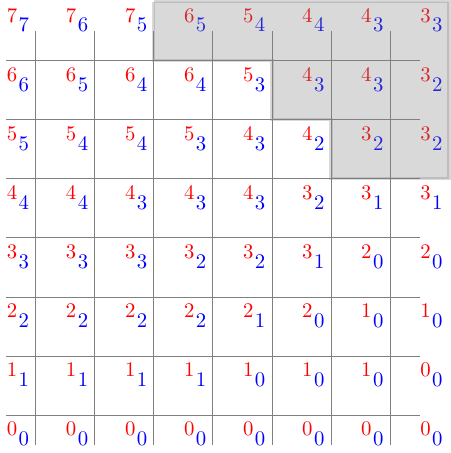} \\
(c) & (d) \\[6pt]
\includegraphics[width=60mm]{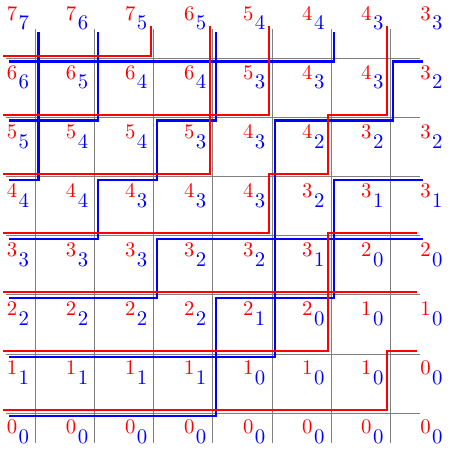} &    \\
(e) &  \\[6pt]
\end{tabular}
\caption{A depiction of the injective mapping $(h_1,h_2)\mapsto (\widetilde{h}_1,\widetilde{h}_2)$ described in \Cref{subs:convexity intro}. In panel (a) the blue and red heights are respectively $h_1$ and $h_2$, while in panels (d), (e) blue and red heights are respectively $\widetilde{h}_1$ and $\widetilde{h}_2$. Here we have $N=M=7$ and with the choice $k=2$. We end up with $\widetilde{h}_1(M,N)=\widetilde{h}_2(M,N) = 3$, which equals the midpoint $\frac{1}{2}\left( h_1(M,N) + h_2(M,N) \right)$.} \label{fig:injection}
\end{figure}

\subsubsection{Log concavity of lower-tail probabilities} \label{subs:convexity intro}
 The proof of \Cref{t.main0} relies on an injective argument. We prove that, given a pair of height functions $h_1,h_2$ associated with stochastic six vertex models with step boundary conditions and such that $h_1(M,N)=r$, $h_2(M,N)=r'$, we can construct, injectively, another pair of heights $\widetilde{h}_1, \widetilde{h}_2$ such that
    \begin{equation} \label{eq:inequality htilde intro}
         \frac{r+r'}{2} - (MN)^{2/5} \le \widetilde{h}_1(M,N),  \widetilde{h}_2(M,N) \le  \frac{r+r'}{2} + (MN)^{2/5}
    \end{equation}
    and importantly
    \begin{equation}\label{eq:inequality weight intro}
        \prod_{v=1}^2 \mathbb{P}(\mathfrak{h}(i,j)=h_v(i,j); (i,j)\in \Lambda_{M,N}) \le \mathsf{C}^{(MN)^{4/5}} \prod_{v=1}^2 \mathbb{P}(\mathfrak{h}(i,j)= \widetilde{h}_v(i,j); (i,j)\in \Lambda_{M,N}).
    \end{equation}
From the above relation, a simple computation allows one to extend weak log-concavity for lower-tail probabilities.  The injective mapping of height functions $(h_1,h_2) \to (\widetilde{h}_1,\widetilde{h}_2)$ is described in detail in \Cref{sec:ltconvex} and it is reminiscent of the Gessel-Viennot involution \cite{gessel1985binomial} for pairs of one dimensional paths. The procedure is illustrated in the various panels of \Cref{fig:injection} and we elaborate it next. Given the two heights $h_1,h_2$ we consider their overlayed plot as in \Cref{fig:injection}(a). Assuming that $h_1(M,N)=r < r' =h_2(M,N)$, we lift up rigidly the entire height function $h_1(\cdot,\cdot)$ by $k$ units, denoting the result by $\tau_k(h_1)$, where the specific $k$ will be decided at a later stage; this is shown in \Cref{fig:injection}(b), where we took $k=2$. By construction $\tau_k(h_1)(M,N)=r+k$. We will assume that $k < r'-r$, in which case we have $\tau_k(h_1)(M,N)<h_2(M,N)$ and therefore there exists a non-empty region
    \begin{equation*}
         \left\{ (i,j): \tau_k(h_1)(i,j) < h_2(i,j)\right\},
    \end{equation*}
    which possesses a connected component (in the topology induced by the Manhattan metric) containing $(M,N)$ which we denote by $R_{M,N}(\tau_k(h_1)<h_2)$. In \Cref{fig:injection}(b) such region is colored in gray. We now swap the values of the two heights $\tau_k(h_1), h_2$ only on the region $R_{M,N}(\tau_k(h_1)<h_2)$, as shown in \Cref{fig:injection}(c). Such operation is denoted by $(\overline{h}_1, \overline{h}_2)=\iota_{M,N}(\tau_k(h_1),h_2)$. We finally proceed to shift downward by $k$ the height $\overline{h}_1$, operation which re-establishes the correct boundary conditions; see \Cref{fig:injection}(d). Keeping the notation consistent with \Cref{sec:ltconvex}, this final operation defines
    \begin{equation*}
        (\widetilde{h}_1,\widetilde{h}_2) = \Upsilon_{(M,N)}^{k} (h_1,h_2),
        \qquad \text{with} \qquad \Upsilon_{(M,N)}^{k} = (\tau_{-k}\times \mathrm{Id}) \circ \iota_{(M,N)} \circ (\tau_{k}\times \mathrm{Id}).
    \end{equation*}
    We can finally transform the height configurations back to paths configurations as done in \Cref{fig:injection}(e).

    The sequence of transformations described above is not necessarily weight preserving, in the sense that it is not true that the weights corresponding to $\widetilde{h}_1,\widetilde{h}_2$ are just a permutation of those corresponding to $h_1,h_2$. Nevertheless, it is not hard to show that differences between  weights corresponding to pairs $h_1,h_2$ and $\widetilde{h}_1,\widetilde{h}_2$ are only possible along the boundaries of the region $R_{M,N}(\tau_k(h_1)<h_2)$. One can be convinced of this claim after checking in \Cref{fig:injection}. This is the point where we choose our shifting index $k$: we pick $k=k^*_{M,N}(h_1,h_2)$ to be the minimal number greater than $\frac{r'-r}{2}$ such that the length of the boundary of $R_{M,N}(\tau_k(h_1)<h_2) \cap \Lambda_{M,N}$ does not exceed $(MN)^{4/5}$. A pigeonhole principle argument (see the proof of Lemma \ref{lem:lift by 4} and Lemma \ref{lem:bound k} for the details) shows that such choice of $k$ will satisfy the bound
    \begin{equation*} 
        \left\lceil \frac{r'-r}{2} \right\rceil \le k^*_{M,N}(h_1,h_2) \le \left\lceil \frac{r'-r}{2} \right\rceil + (MN)^{2/5},
    \end{equation*}
    guaranteeing that the values of the transformed heights $\widetilde{h}_1,\widetilde{h}_2$ satisfy the desired inequalities \eqref{eq:inequality htilde intro} and simultaneously \eqref{eq:inequality weight intro}. Here the factor $\mathsf{C}^{(MN)^{4/5}}$ appears to bound the maximal difference in  weights which the injection produces. Since the choice of $k$ forces the number of vertices where such  weight mismatch can occur to be less than $(MN)^{4/5}$, the exponent is also justified.

\subsubsection{Lower tail LDP} The starting point of our proof of \Cref{thm.main} is an identity from \cite{borodin2016stochastic_MM,BO2016_ASEP} {which, in view of an identity from \cite{IMS_matching}}, can be read as follows:
\begin{align}\label{pfidea1}
    \Pr\big(\mathfrak{h}(\lfloor\kappa N\rfloor,N)-\chi-S \ge sN\big) = \left[\prod_{j=0}^\infty \frac{1}{1+q^{j-sN}}\right]\mathbb{E}\left[\prod_{j=0}^{N-1}\Big(1+q^{\lambda_{N-j}+j-sN}\Big)\right].
\end{align}
Here $\chi+S$ is a random shift independent of $\mathfrak{h}(\cdot,\cdot)$ with an explicit distribution (given in \eqref{chisd}) and in the right-hand side  $\lambda$ is distributed as a Schur measure with a certain positive specialization (see \eqref{eq:schurmeasure}). Since the work of \cite{johansson2000shape}, it has been known that such Schur measures can be viewed as \textit{discrete log-gases}.  Discrete and continuous log-gases, on the other hand, are extensively studied in random matrix theory and beyond (see the books \cite{AndersonGuionnetZeitouniBook,deift2000orthogonal,Forrester-LogGas,mehta2004random,pastur2011eigenvalue} for a review), and have a close connection with potential theory. This connection allows one to derive large deviations for the empirical measure $\sum_{j=0}^{N-1} \delta\big((\lambda_{N-j}+j)/N\big)$. Using a Varadhan's lemma type argument from this point,  the asymptotics of the right-hand side of \eqref{pfidea1} can be determined. This eventually leads to
 \begin{align}\label{pfidea15}
        \lim_{N\to \infty}-\frac1{N^2}\log\Pr(\h(\lfloor \kappa N \rfloor,N)-\chi-S \ge sN) = \mathcal{F}_\kappa(s),
    \end{align}
where $\mathcal{F}_\kappa(s)$ is given in \eqref{def:fs} and \eqref{def:fs2}. The details of the derivation of \eqref{pfidea15} are presented in \Cref{sec.2.1}. 

\medskip

To prove \eqref{e.main} from here, we need to negate the effect of the shift $\chi+S$. It turns out that the lower tail of $\chi+S$ is Gaussian, with $\Pr(\chi+S \le -yN) \asymp e^{-N^2g(y)}$ where $g(y):=\big(\log q^{-1}\big)y^2/2$, and hence {the presence of this random shift} has a non-trivial effect {on the limit \eqref{pfidea15}}. Indeed, if we define
\begin{equation*}
    \Phi_{\kappa,N}(y) \coloneqq -\frac1{N^2}\log\Pr(\h(\lfloor \kappa N \rfloor,N) \ge yN),
\end{equation*}
we can deduce (\Cref{prop:f_limit_g+T} in the text) that
\begin{align}\label{pfidea2}
    \mathcal{F}_\kappa(y) =\lim_{N\to \infty}\inf_{x\in \R} \{g(x)+\Phi_{\kappa,N}(y-x)\}.
\end{align}
To show that $\Phi_{\kappa,N}$ has a limit, we use a deconvolution scheme from \cite{das2023large} which relies on two key components:
\begin{itemize}
\setlength\itemsep{0.2em}
    {
    \item \emph{The rate function $\mathcal{F}_\kappa(y)$ possesses, for $y$ large, an explicit parabolic behavior}.  
    \item \emph{The pre-limit rate function $\Phi_{\kappa,N}$ is approximately convex}, as implied by \Cref{t.main0}.
    }
\end{itemize}
{In order to establish the precise behaviour at large $y$ of the rate function $\mathcal{F}_\kappa(y)$, we show that, for all large enough $y$, we have $F_\kappa(y)=F_\kappa(\infty)$, where $F_{\kappa}$ is the energy integral defined below \eqref{def:fs}, and then compute $F_\kappa(\infty)$ explicitly. This relies heavily on potential theory, and we refer the readers to \Cref{sec.2.2} for details. A result of such computation is that the exact expression in \eqref{parab} in fact is equal to $g(y-1)+\Phi_{\kappa,N}(1)$. This fact is quite important; as in view of \eqref{pfidea2}, it implies an approximate continuity of $\Phi_{\kappa,N}$ at $1$: 
\begin{align*}
    & \mbox{For any $\e>0$, there exists $\delta>0$ (free of $N$) such that } 
\Phi_{\kappa,N}(1)-\Phi_{\kappa,N}(1-\delta) \le \e \mbox{ for all large }N.
\end{align*} 
Approximate convexity of $\Phi_{\kappa,N}$ then allows us to upgrade the above approximate continuity to equicontinuity of $\{\Phi_{\kappa,N}\}_N$ and hence $\Phi_{\kappa,N}$ has a limit{, by the Ascoli-Arzela theorem}. {Finally, any such limit can be expressed uniquely, using its convexity and properties of the Legendre-Fenchel transform, as the infimal deconvolution between $\mathcal{F}_\kappa$ and the parabola $g$, as in \eqref{defphi}, hence proving that it is unique.} The details {of this procedure} are given in \Cref{prop:equicont}.
}

\medskip

{We mention that our deconvolution scheme bears resemblance to Seppäläinen's deconvolution idea, used in proving the upper-tail LDP for the polynuclear growth (PNG) model \cite{seppalainen_98_increasing}. Seppäläinen's proof was based on a coupling between the PNG model and a suitable particle system that admits certain known stationary initial conditions. This coupling provides a variational relation between the upper-tail rate function of the PNG model and the stationary model (similar to \eqref{pfidea2}). The rate function in the stationary model can be computed explicitly. Using the variational relation, one then derives an explicit form for the rate function in the original model.} 
 
 {The coupling approach has been fruitful in obtaining upper-tail LDPs for various other models in the KPZ universality class \cite{sepp98mprf,georgiou2013large,janjigian2015large,emrah,ciech,jan19}. We stress that the existence of the upper-tail rate function in these models can be deduced from a standard subadditive argument \cite{kesten}. Thus, it is the explicit form of the upper-tail rate function that is of interest in these models. In our case, however, the existence of the rate function is non-trivial. Due to the lack of an explicit form for $\mathcal{F}_\kappa$, we currently do not have an explicit form for $\Phi_\kappa^{(-)}$.}

\subsection*{Outline.} In \Cref{sec:ltconvex}, we derive our weak log-concavity result for the tail probabilities of the height function: \Cref{t.main0}. In \Cref{sec: LDP_schur} we prove the lower-tail LDP for the shifted height function.  This section can be read independently of \Cref{sec:ltconvex}. Finally, in \Cref{sec.4} we apply the deconvolution scheme and prove \Cref{thm.main} and \Cref{thm.main2}.
The appendices \Cref{sec.appb} and \Cref{sec.app} contain, respectively, some potential theory estimates and equilibrium measure calculations. As both involve fairly standard arguments, we relegate them to the appendix.

\subsection*{Notation and Conventions} Throughout the paper we fix $\aw,q\in (0,1)$ and $\kappa\in (0,\infty)$. We use the notation $\logq:=\log q^{-1}$. We use $\Con(x,y,z,\ldots)>0$ to denote a generic
deterministic positive finite constant that is dependent on the designated variables $x,y,z,\ldots.$ All our constants may depend on $a,q,\kappa$; we will not mention this further. $\#|A|$ denotes the cardinality of a finite set $A$.

\subsection*{Acknowledgements}
{We thank Ivan Corwin for his feedback and suggestions on an earlier draft of the paper. We also thank Hindy Drillick and Promit Ghosal for sharing their upcoming works, \cite{hindy24} and \cite{gs24} respectively, with us. We thank the anonymous referees for their careful reading and useful comments
on improving our manuscript.
The work of YL was partially supported by EPSRC via grant EP/X03237X/1.}

\section{Log-concavity bounds for the lower tail probabilities}  \label{sec:ltconvex}

The goal of this section is to deduce log-concavity bounds for the tail probabilities of the S6V model. As explained in the introduction, the proof relies on an injective argument that heavily utilizes surface geometry of the height function of the S6V model. In \Cref{sec3.1}, we derive certain properties of the underlying geometry of height surfaces and define certain class of bijective contraction maps. In \Cref{sec3.2}, using the maps and the geometry, we prove \Cref{t.main0}.

\subsection{Surface geometry of height function} \label{sec3.1} In this section, we introduce a space $\mathscr{H}$ for the height functions of the S6V model and discuss some geometric properties of the space $\mathscr{H}$. 

Towards this end, we define  the dual lattice $\Lambda' :=  \mathbb{Z}_{\ge0}' \times \mathbb{Z}_{\ge0}'$, where $\mathbb{Z}'_{\ge0}=\mathbb{Z}_{\ge0}+\frac{1}{2}$, which we view as the set of faces of $\Lambda=\Z_{\ge 0}^2$; see \Cref{fig:boundary set}.
For every vertex $v \in \Lambda$ its adjacent faces are defined as
\begin{center}
    \begin{minipage}[c]{0.3\textwidth}
        \centering
        \begin{tikzpicture}
            \draw[thick] (-1.5,0) -- (1.5,0);
            \draw[thick] (0,-1.5) -- (0,1.5);
            \draw[fill] (0,0 ) circle(.1cm);
            \node[] at (-.8,.8) {$\mathsf{NW}(v)$};
            \node[] at (.8,.8) {$\mathsf{NE}(v)$};
            \node[] at (-.8,-.8) {$\mathsf{SW}(v)$};
            \node[] at (.8,-.8) {$\mathsf{SE}(v)$};
            \node[] at (.25,.25) {$v$};
        \end{tikzpicture}
    \end{minipage}
    \begin{minipage}[c]{0.6\textwidth}
        \centering
        \begin{equation}
        \label{dir}
        \begin{split}
            \mathsf{NW}(v) = v - \frac{\mathbb{e}_1}{2} + \frac{\mathbb{e}_2}{2},
            \qquad
            &\mathsf{NE}(v) = v + \frac{\mathbb{e}_1}{2} + \frac{\mathbb{e}_2}{2}
            \\
            \mathsf{SW}(v) = v - \frac{\mathbb{e}_1}{2} - \frac{\mathbb{e}_2}{2},
            \qquad
            &\mathsf{SE}(v) = v + \frac{\mathbb{e}_1}{2} - \frac{\mathbb{e}_2}{2}.
        \end{split}
        \end{equation}
    \end{minipage}
\end{center}
For any region $R \subset \Lambda'$, we define its boundary $\partial R \subset \Lambda$ as the set of vertices $v \in \mathbb{Z}_{\ge 1}^2$ adjacent to both faces in $R$ and in $R^\mathrm{c}$; see \Cref{fig:boundary set}.

{For a function $h:\Lambda' \to \Z$ we introduce the discrete gradients
\begin{equation*}
    \nabla_1 h(p) \coloneqq h(p + \mathbb{e}_1) - h(p),
    \qquad
    \nabla_2 h(p) \coloneqq h(p + \mathbb{e}_2) - h(p)
\end{equation*}
and we define the collection
\begin{align}
    \mathscr{H} & :=\Big\{ h : \Lambda' \to \mathbb{Z} : \nabla_1 h(p) \in \{0,-1\}, \, \nabla_2 h(p) \in \{0,1\}\Big\}.
\end{align}
}
$\mathscr{H}$ is the set of all possible stochastic six vertex model height functions. We also define
    \begin{equation}\label{eq: H0}
        \mathscr{H}_0 := \left\{ h \in \mathscr{H} : h(m,1/2) = 0, \, h(1/2,m) = m-1/2, \, \text{for all } m \in \mathbb{Z}_{\ge 0}' \right\}
    \end{equation}
    to denote the set of stochastic six vertex model height functions obtained from step initial condition.

In our arguments, we shall typically consider two height functions $h,h'\in \mathscr{H}$ simultaneously which would often require us to consider regions where one of the heights dominates the other. We introduce these regions next and then derive some of their geometric properties.
    
    \begin{definition}
    Given two height functions $h,h' \in \mathscr{H}$ we define the region of $\Lambda'$
    \begin{equation*}
        R(h<h') = \left\{ p \in \Lambda' : h(p) < h'(p) \right\}.
    \end{equation*}
    In general the region $R(h<h')$ is the disjoint union of connected components, in the topology of $\Lambda'$ induced by the nearest neighbor metric.   For any $p\in \Lambda'$ we will denote the connected component of $R(h<h')$ containing $p$ by $R_p(h<h')$. In case $p \notin R(h<h')$, by agreement we set $R_p(h<h')=\varnothing$. Similarly, we define 
    \begin{equation*}
        R(h\neq h') = R(h<h') \cup R(h'<h)
    \end{equation*} 
    and we denote by $R_p(h \neq h')$ the connected component of $R(h\neq h')$ containing the face $p$.
\end{definition}
The following proposition states a simple property of the boundary of the regions $R(h<h')$.

\begin{proposition} \label{lem:adjacent faces}
    Fix $h,h' \in \mathscr{H}$. If $p,p' \in \Lambda'$ are adjacent faces such that $p \in R(h<h')$ and $p' \notin R(h<h')$, then $h'(p) = h(p)+1$ and $h'(p') = h(p')$.
\end{proposition}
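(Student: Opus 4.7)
The plan is to show that $\Delta := h'-h$ is $1$-Lipschitz on the face lattice $\Lambda'$ with respect to the nearest-neighbor graph distance, after which the stated conclusion is forced purely by integrality.

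First, I will observe that since both $h$ and $h'$ lie in $\mathscr{H}$, one has $\nabla_1 h,\nabla_1 h' \in \{-1,0\}$ and $\nabla_2 h,\nabla_2 h' \in \{0,1\}$. Subtracting yields $\nabla_i \Delta(q) \in \{-1,0,1\}$ for $i=1,2$ and every $q\in\Lambda'$, which translates to the Lipschitz bound $|\Delta(q)-\Delta(q')| \le 1$ whenever $q,q'\in\Lambda'$ are adjacent faces.

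Next, I will combine this with the integrality of $\Delta$. The hypothesis $p\in R(h<h')$ gives $\Delta(p) \ge 1$, while $p'\notin R(h<h')$ gives $\Delta(p) \le 0$. Applying the Lipschitz bound to the adjacent pair $p,p'$ then yields
\[
1 \;\le\; \Delta(p) \;\le\; \Delta(p')+1 \;\le\; 1,
\]
which forces $\Delta(p)=1$ and $\Delta(p')=0$, i.e.\ $h'(p)=h(p)+1$ and $h'(p')=h(p')$, as claimed.

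I do not expect a substantive obstacle: the only point worth verifying carefully is that the mixed sign conventions in the definition of $\mathscr{H}$ ($\nabla_1\in\{-1,0\}$ versus $\nabla_2\in\{0,1\}$) still give $\nabla_i \Delta\in\{-1,0,1\}$, so that the $1$-Lipschitz inequality holds uniformly across all four possible adjacencies relating $p$ to $p'$ (horizontal or vertical, in either direction).
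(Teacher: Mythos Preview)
Your proof is correct and follows essentially the same idea as the paper's: both exploit that the discrete gradients of $h,h'$ lie in two-element sets, forcing $\Delta=h'-h$ to change by at most one across adjacent faces (the paper verifies this by explicit enumeration of the four value pairs at $p'$ for one adjacency direction rather than stating the $1$-Lipschitz bound abstractly). Note the minor typo in your second paragraph: ``$\Delta(p)\le 0$'' should read ``$\Delta(p')\le 0$''.
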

\begin{proof}
    Consider the case $p'=p+\mathbb{e}_1$. Calling $r=h(p)$ and $r'=h'(p)$, then we have $h(p')$ is either $r$ or $r-1$ and $h'(p')$ is either $r'$ or $r'-1$ as depicted below
    \begin{equation*}
        \begin{tikzpicture}
            \draw[thick] (0,0) -- (1,0) -- (1,1) -- (0,1) -- (0,0);
            \node[] at (.5,.3) {\small \textcolor{blue}{$r$}};
            \node[] at (.5,.7) {\small \textcolor{red}{$r'$}};
            \draw[thick] (1,0) -- (2,0) -- (2,1) -- (1,1) -- (1,0);
            \node[] at (1.5,.3) {\small \textcolor{blue}{$r$}};
            \node[] at (1.5,.7) {\small \textcolor{red}{$r'$}};
            \node[below] at (.5,-.12) {$p$};
            \node[below] at (1.5,0) {$p'$};

            \draw[thick,xshift=3cm] (0,0) -- (1,0) -- (1,1) -- (0,1) -- (0,0);
            \node[xshift=3cm] at (.5,.3) {\small \textcolor{blue}{$r$}};
            \node[xshift=3cm] at (.5,.7) {\small \textcolor{red}{$r'$}};
            \draw[thick,xshift=3cm] (1,0) -- (2,0) -- (2,1) -- (1,1) -- (1,0);
            \node[xshift=3cm] at (1.5,.3) {\small \textcolor{blue}{$r-1$}};
            \node[xshift=3cm] at (1.5,.7) {\small \textcolor{red}{$r'$}};
            \node[below,xshift=3cm] at (.5,-.12) {$p$};
            \node[below,xshift=3cm] at (1.5,0) {$p'$};

            \draw[thick,xshift=6cm] (0,0) -- (1,0) -- (1,1) -- (0,1) -- (0,0);
            \node[xshift=6cm] at (.5,.3) {\small \textcolor{blue}{$r$}};
            \node[xshift=6cm] at (.5,.7) {\small \textcolor{red}{$r'$}};
            \draw[thick,xshift=6cm] (1,0) -- (2,0) -- (2,1) -- (1,1) -- (1,0);
            \node[xshift=6cm] at (1.5,.3) {\small \textcolor{blue}{$r$}};
            \node[xshift=6cm] at (1.5,.7) {\small \textcolor{red}{$r'-1$}};
            \node[below,xshift=6cm] at (.5,-.12) {$p$};
            \node[below,xshift=6cm] at (1.5,0) {$p'$};

            \draw[thick,xshift=9cm] (0,0) -- (1,0) -- (1,1) -- (0,1) -- (0,0);
            \node[xshift=9cm] at (.5,.3) {\small \textcolor{blue}{$r$}};
            \node[xshift=9cm] at (.5,.7) {\small \textcolor{red}{$r'$}};
            \draw[thick,xshift=9cm] (1,0) -- (2,0) -- (2,1) -- (1,1) -- (1,0);
            \node[xshift=9cm] at (1.5,.3) {\small \textcolor{blue}{$r-1$}};
            \node[xshift=9cm] at (1.5,.7) {\small \textcolor{red}{$r'-1$}};
            \node[below,xshift=9cm] at (.5,-.12) {$p$};
            \node[below,xshift=9cm] at (1.5,0) {$p'$};
        \end{tikzpicture}
        .
    \end{equation*}
Since $p\in R(h<h')$, we have $r<r'$. Since $p' \notin R(h<h')$, we have $h(p')\ge h'(p')$. But, among the above four cases, this is only possible when $r'=r+1$. This proves our claim in the case $p'=p+\mathbf{e}_1$. All other cases $p'=p-\mathbb{e}_1$, $p'=p+\mathbb{e}_2$, $p'=p-\mathbb{e}_2$ can be proven analogously.
\end{proof}

By virtue of \Cref{lem:adjacent faces}, for any pair of heights $h,h' \in \mathscr{H}$ the sets $R(h<h')$ and $R(h'<h)$ do not have any adjacent faces. In other words, in terms of Manhattan set distance, $d_1(R(h<h'), R(h'<h)) > 1$. A consequence of this fact is that, for any $p$ the set $R_p(h\neq h')$ is either equal to $R_p(h < h')$ or $R_p(h' < h)$.

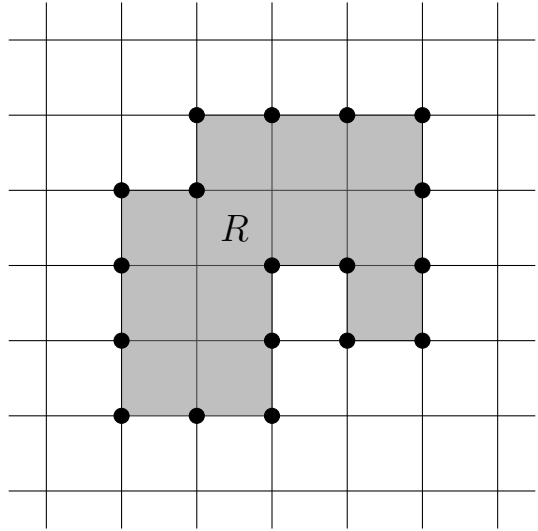
\begin{figure}
    \centering
    \begin{tikzpicture}
        \foreach \i in {1,...,7}
        {
            \draw[] (.5,\i) -- (7.5,\i);
            \draw[] (\i,.5) -- (\i,7.5);
        }
        \draw[fill=gray,opacity=.5] (2,2) -- (2,5) -- (3,5) -- (3,6) -- (6,6) -- (6,3) -- (5,3) -- (5,4) -- (4,4) -- (4,2);
        \foreach \p in {(2,2),(2,3),(2,4),(2,5),(3,5),(3,6),(4,6),(5,6),(6,6),(6,5),(6,4),(6,3),(5,3),(5,4),(4,4),(4,3),(4,2),(3,2)}
        {
            \draw[fill] \p circle(.1cm);
        }
        \node[] at (3.5,4.5) {\Large $R$};
    \end{tikzpicture}  
    \caption{A connected region $R\subset \Lambda'$. Thick dots denote the boundary set $\partial R$.}
    \label{fig:boundary set}
\end{figure}

\medskip

We next introduce the shift map $\tau_k : \mathscr{H} \to \mathscr{H}$ for any $k\in \Z$ by setting 
$$\tau_k(h) (p) := h(p)+k.$$
In words, the shift map $\tau_k$ lifts a height by $k$ units.  The following lemma shows that given any vertex $v$ on the boundary of two height functions, i.e., $\partial R(h\neq h')$, if we lift one of the heights by at least $2$ units, the vertex $v$ no longer lies in the boundary $\partial R(\tau_k(h)\neq h')$.

\begin{lemma} \label{lem:lift by 4}
    Fix $h,h' \in \mathscr{H}$. If $v \in \partial R(h \neq h')$, then, for all $k\ge 2$, $v\notin \partial R(\tau_k(h) \neq h')$.
\end{lemma}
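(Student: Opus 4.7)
My strategy is to analyse the discrete difference $f \coloneqq h' - h$ on the four faces $\mathsf{NW}(v), \mathsf{NE}(v), \mathsf{SE}(v), \mathsf{SW}(v)$ adjacent to $v$, and to show that a shift of $h$ by $k$ units drives $f-k$ strictly away from $0$ on every such face. This would force all four adjacent faces into $R(\tau_k(h)\neq h')$ and hence take $v$ off the boundary.

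First, the gradient constraints $\nabla_1 h,\nabla_1 h' \in\{0,-1\}$ and $\nabla_2 h,\nabla_2 h' \in \{0,1\}$ imply that for any two adjacent faces $p,p'\in\Lambda'$ the increment $f(p')-f(p)$ lies in $\{-1,0,1\}$. Running this inequality around the $4$-cycle of faces adjacent to $v$, the set $\{f(\mathsf{NW}(v)),f(\mathsf{NE}(v)),f(\mathsf{SE}(v)),f(\mathsf{SW}(v))\}$ has diameter at most $2$. Second, Proposition \ref{lem:adjacent faces} forces $f$ to jump by exactly $\pm 1$ whenever an adjacent pair of faces straddles the boundary of $R(h<h')$ or $R(h'<h)$; hence no two adjacent faces can simultaneously carry $f=1$ and $f=-1$, i.e.\ $R(h<h')$ and $R(h'<h)$ are at Manhattan distance strictly greater than $1$. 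In particular, around $v$ the four values of $f$ all share a common sign, and by symmetry I may assume they are all $\ge 0$. Combined with the hypothesis $v\in \partial R(h\neq h')$ (which forces some adjacent $f$-value to vanish and some not), this confines the four values of $f$ to the set $\{0,1,2\}$.

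Finally, after the shift we have $h'-\tau_k(h)=f-k$ on the four adjacent faces, taking values in $\{-k,1-k,2-k\}$. For $k\ge 3$ these are all strictly negative, so every face adjacent to $v$ lies in $R(\tau_k(h)\neq h')$ and we are done. The sharp case $k=2$ requires ruling out the extremal pattern $f=(0,1,2,1)$ around the $4$-cycle: my plan is to apply Proposition \ref{lem:adjacent faces} at the face where $f=2$, which forces every one of its $\Lambda'$-neighbours to lie in $R(h<h')$; propagating this local structure together with the gradient bounds should then produce a contradiction with $v$ being a boundary vertex of $R(h\neq h')$. This delicate propagation argument is the main obstacle in the proof; the diameter-and-sign considerations handle all larger values of $k$ uniformly and without subtlety.
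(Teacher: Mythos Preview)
Your reduction---analyzing $f=h'-h$ on the four faces around $v$, bounding the diameter by $2$, pinning one value to $0$, and concluding for $k\ge 3$---is essentially the paper's own case analysis (phrased there via gray/white face pictures), and that part is correct. One minor slip: ``the four values of $f$ all share a common sign'' does not follow, since diagonally opposite faces around $v$ are not adjacent in $\Lambda'$ and can carry $f=1$ and $f=-1$; but in that sub-case the remaining two faces are forced to $f=0$, so $|f|\le1$ and $k\ge2$ is immediate anyway.

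The genuine gap is exactly where you flag it, and your proposed fix cannot work: the extremal pattern $(0,1,2,1)$ around the $4$-cycle is \emph{realizable}, so no propagation argument will produce a contradiction. Concretely, for $v\in\mathbb{Z}_{\ge1}^2$ and any $a\in\mathbb{Z}$ set
\[
\bigl(h(\mathsf{SW}),h(\mathsf{NW}),h(\mathsf{NE}),h(\mathsf{SE})\bigr)=(a,a,a-1,a-1),
\qquad
\bigl(h'(\mathsf{SW}),h'(\mathsf{NW}),h'(\mathsf{NE}),h'(\mathsf{SE})\bigr)=(a,a+1,a+1,a);
\]
all gradient constraints hold (both are ``straight-through'' vertex configurations) and these extend globally in $\mathscr{H}$, e.g.\ by taking $h$ constant along columns and $h'$ constant along rows. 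Then $f=(0,1,2,1)$ and $v\in\partial R(h\neq h')$. After the shift, $h'-\tau_2(h)=(-2,-1,0,-1)$: the face $\mathsf{NE}$ has $\tau_2(h)=h'$ while the other three have $\tau_2(h)>h'$, so $v\in\partial R(h'<\tau_2(h))\subset\partial R(\tau_2(h)\neq h')$. Your application of Proposition~\ref{lem:adjacent faces} at $\mathsf{NE}$ only tells you its $\Lambda'$-neighbours lie in $R(h<h')$, and indeed in this example they all have $f\in\{1,2\}$; nothing contradicts $v\in\partial R(h\neq h')$.

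For context, the paper's own proof shares this gap: in the three-face case it concludes only ``$v\notin\partial R(\tau_k(h)<h')$ for all $k\ge2$'', the one-sided version, which is correct but strictly weaker than the stated ``$\neq$'' claim. The lemma becomes true (and both arguments complete) with $k\ge3$ in place of $k\ge2$; the downstream pigeonhole in Lemma~\ref{lem:bound k} then runs with stride $3$ rather than $2$, affecting only inessential constants.
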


\begin{proof} Note that if $v\in \partial R(h'<h)$, then $h(p) \ge h'(p)-1$ for all faces $p$ around $v$. But then for $k\ge 2$, $\tau_k(p)> h'(p)$ for all faces $p$ around $v$. Thus $v\notin \partial R(h'<\tau_k(h))$ for all $k\ge 2$. Suppose $v\in \partial R(h<h')$.
    Then among the faces adjacent to the vertex $v$ at least one is in $R(h<h')$, while at least one is not in $R(h<h')$. Coloring faces in $R(h<h')$ in gray and in white those not in $R(h<h')$, we can depict cases in which only one of the faces adjacent to $v$ is in $R(h<h')$ as
    \begin{equation} \label{eq:1face}
        \begin{tikzpicture}
            \draw[thick] (0,0) -- (1,0);
            \draw[thick] (.5,-.5) -- (.5,.5);
            \draw[fill=gray,opacity=.5] (0,0) -- (.5,0) -- (.5,.5) -- (0,.5) -- (0,0);

            \draw[thick,xshift=2cm] (0,0) -- (1,0);
            \draw[thick,xshift=2cm] (.5,-.5) -- (.5,.5);
            \draw[fill=gray,opacity=.5,xshift=2.5cm] (0,0) -- (.5,0) -- (.5,.5) -- (0,.5) -- (0,0);

            \draw[thick,xshift=4cm] (0,0) -- (1,0);
            \draw[thick,xshift=4cm] (.5,-.5) -- (.5,.5);
            \draw[fill=gray,opacity=.5,xshift=4cm,yshift=-.5cm] (0,0) -- (.5,0) -- (.5,.5) -- (0,.5) -- (0,0);

            \draw[thick,xshift=6cm] (0,0) -- (1,0);
            \draw[thick,xshift=6cm] (.5,-.5) -- (.5,.5);
            \draw[fill=gray,opacity=.5,xshift=6.5cm,yshift=-.5cm] (0,0) -- (.5,0) -- (.5,.5) -- (0,.5) -- (0,0);
        \end{tikzpicture}
        ,
    \end{equation}
    cases in which two of the faces adjacent to $v$ are in $R(h<h')$ as
    \begin{equation} \label{eq:2faces}
        \begin{tikzpicture}
            \draw[thick] (0,0) -- (1,0);
            \draw[thick] (.5,-.5) -- (.5,.5);
            \draw[fill=gray,opacity=.5] (0,0) -- (.5,0) -- (.5,.5) -- (0,.5) -- (0,0);
            \draw[fill=gray,opacity=.5,xshift=.5cm] (0,0) -- (.5,0) -- (.5,.5) -- (0,.5) -- (0,0);

            \draw[thick,xshift=2cm] (0,0) -- (1,0);
            \draw[thick,xshift=2cm] (.5,-.5) -- (.5,.5);
            \draw[fill=gray,opacity=.5,xshift=2cm] (0,0) -- (.5,0) -- (.5,.5) -- (0,.5) -- (0,0);
            \draw[fill=gray,opacity=.5,xshift=2cm,yshift=-.5cm] (0,0) -- (.5,0) -- (.5,.5) -- (0,.5) -- (0,0);

            \draw[thick,xshift=4cm] (0,0) -- (1,0);
            \draw[thick,xshift=4cm] (.5,-.5) -- (.5,.5);
            \draw[fill=gray,opacity=.5,xshift=4cm,yshift=-.5cm] (0,0) -- (.5,0) -- (.5,.5) -- (0,.5) -- (0,0);
            \draw[fill=gray,opacity=.5,xshift=4.5cm,yshift=-.5cm] (0,0) -- (.5,0) -- (.5,.5) -- (0,.5) -- (0,0);

            \draw[thick,xshift=6cm] (0,0) -- (1,0);
            \draw[thick,xshift=6cm] (.5,-.5) -- (.5,.5);
            \draw[fill=gray,opacity=.5,xshift=6.5cm,yshift=-.5cm] (0,0) -- (.5,0) -- (.5,.5) -- (0,.5) -- (0,0);
            \draw[fill=gray,opacity=.5,xshift=6.5cm] (0,0) -- (.5,0) -- (.5,.5) -- (0,.5) -- (0,0);

            \draw[thick,xshift=8cm] (0,0) -- (1,0);
            \draw[thick,xshift=8cm] (.5,-.5) -- (.5,.5);
            \draw[fill=gray,opacity=.5,xshift=8.5cm,yshift=-.5cm] (0,0) -- (.5,0) -- (.5,.5) -- (0,.5) -- (0,0);
            \draw[fill=gray,opacity=.5,xshift=8cm] (0,0) -- (.5,0) -- (.5,.5) -- (0,.5) -- (0,0);

            \draw[thick,xshift=10cm] (0,0) -- (1,0);
            \draw[thick,xshift=10cm] (.5,-.5) -- (.5,.5);
            \draw[fill=gray,opacity=.5,xshift=10cm,yshift=-.5cm] (0,0) -- (.5,0) -- (.5,.5) -- (0,.5) -- (0,0);
            \draw[fill=gray,opacity=.5,xshift=10.5cm] (0,0) -- (.5,0) -- (.5,.5) -- (0,.5) -- (0,0);
        \end{tikzpicture}
    \end{equation}
    and finally cases in which three of the faces adjacent to $v$ are in $R(h<h')$ as
    \begin{equation}\label{eq:3faces}
        \begin{tikzpicture}
            \draw[thick] (0,0) -- (1,0);
            \draw[thick] (.5,-.5) -- (.5,.5);
            \draw[fill=gray,opacity=.5] (0,0) -- (.5,0) -- (.5,.5) -- (0,.5) -- (0,0);
            \draw[fill=gray,opacity=.5,xshift=.5cm] (0,0) -- (.5,0) -- (.5,.5) -- (0,.5) -- (0,0);
            \draw[fill=gray,opacity=.5,yshift=-.5cm] (0,0) -- (.5,0) -- (.5,.5) -- (0,.5) -- (0,0);

            \draw[thick,xshift=2cm] (0,0) -- (1,0);
            \draw[thick,xshift=2cm] (.5,-.5) -- (.5,.5);
            \draw[fill=gray,opacity=.5,xshift=2.5cm] (0,0) -- (.5,0) -- (.5,.5) -- (0,.5) -- (0,0);
            \draw[fill=gray,opacity=.5,xshift=2.5cm,yshift=-.5cm] (0,0) -- (.5,0) -- (.5,.5) -- (0,.5) -- (0,0);
            \draw[fill=gray,opacity=.5,xshift=2cm] (0,0) -- (.5,0) -- (.5,.5) -- (0,.5) -- (0,0);

            \draw[thick,xshift=4cm] (0,0) -- (1,0);
            \draw[thick,xshift=4cm] (.5,-.5) -- (.5,.5);
            \draw[fill=gray,opacity=.5,xshift=4cm,yshift=-.5cm] (0,0) -- (.5,0) -- (.5,.5) -- (0,.5) -- (0,0);
            \draw[fill=gray,opacity=.5,xshift=4.5cm,yshift=-.5cm] (0,0) -- (.5,0) -- (.5,.5) -- (0,.5) -- (0,0);
            \draw[fill=gray,opacity=.5,xshift=4.5cm] (0,0) -- (.5,0) -- (.5,.5) -- (0,.5) -- (0,0);

            \draw[thick,xshift=6cm] (0,0) -- (1,0);
            \draw[thick,xshift=6cm] (.5,-.5) -- (.5,.5);
            \draw[fill=gray,opacity=.5,xshift=6.5cm,yshift=-.5cm] (0,0) -- (.5,0) -- (.5,.5) -- (0,.5) -- (0,0);
            \draw[fill=gray,opacity=.5,xshift=6cm,yshift=-.5cm] (0,0) -- (.5,0) -- (.5,.5) -- (0,.5) -- (0,0);
            \draw[fill=gray,opacity=.5,xshift=6cm] (0,0) -- (.5,0) -- (.5,.5) -- (0,.5) -- (0,0);
        \end{tikzpicture}
        .
    \end{equation}
    By \Cref{lem:adjacent faces} we know that, white faces $p'$ adjacent to gray faces are such that $h(p')=h'(p')$, while gray faces $p$ adjacent to white faces are such that $h'(p)=h(p)+1$. This shows that in all cases \eqref{eq:1face}, \eqref{eq:2faces} none of the faces adjacent to $v$ will belong to $R(\tau_1(h)<h')$. On the other hand if faces configurations fall into case \eqref{eq:3faces}, then, after lifting up by 1 height $h$, around vertex $v$ there could still be a single face of $R(\tau_1(h)<h')$, as in \eqref{eq:1face}. Then, lifting further $h$, we have that $R(\tau_2(h)<h')$ cannot intersect any of the vertices adjacent to $v$. Thus, $v\notin \partial R(\tau_k(h)<h')$ for all $k\ge 2$. This completes the proof.
\end{proof}

As a consequence of the above lemma, we observe that $\{\partial R(\tau_{2k}(h)\neq h')\}_{k\ge 0}$ is a disjoint family of sets. By a pigeonhole principle argument, this will allow us to obtain $k^*$ where $|\partial R(\tau_{2k^*}(h)\neq h') \cap ([0,M]\times [0,N])|$ is of smaller order than $MN$. We record a version of this consequence in the following lemma. 

\begin{lemma} \label{lem:bound k}
    Fix any $p\in \Lambda'$ and $M,N \ge 8$ and $r < r'$. For any pair of height functions $h,h' \in \mathscr{H}_0^{}$ with $h(M,N)=r$, $h'(M,N)=r'$, define
    \begin{equation} \label{eq:def k}
        k^*_{M,N}(p;h,h') = \min \left\{ k\ge \left\lceil \frac{r'-r}{2} \right\rceil \,: \, \left| \partial R_{p}(\tau_k(h)\neq h') \cap \Lambda_{M,N} \right| \le (MN)^{4/5} \right\},
    \end{equation}
    where $\Lambda_{M,N} := \left\{ 1,\dots, M \right\} \times \left\{ 1,\dots, N  \right\}$.
    Then, we have
    \begin{equation} \label{eq:bound k}
        \left\lceil \frac{r'-r}{2} \right\rceil \le k^*_{M,N}(p;h,h') \le \left\lceil \frac{r'-r}{2} \right\rceil + (MN)^{2/5}.
    \end{equation}
\end{lemma}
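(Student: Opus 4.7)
The lower bound in \eqref{eq:bound k} is immediate from the definition \eqref{eq:def k} of $k^*_{M,N}(p;h,h')$, so the real content is the upper bound. The approach is a pigeonhole argument built on iterated application of \Cref{lem:lift by 4}, arranged along an arithmetic progression of shifts of length $\sim (MN)^{2/5}$.

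First, I would set up the progression. Let $k_j := \lceil (r'-r)/2 \rceil + 2j$ for $j=0,1,\dots,J$, where $J := \lfloor \tfrac{1}{2}(MN)^{2/5}\rfloor$. By construction $k_0 \ge \lceil (r'-r)/2\rceil$ and $k_J \le \lceil (r'-r)/2\rceil + (MN)^{2/5}$, so it suffices to show that at least one $k_j$ lies in the set whose minimum defines $k^*_{M,N}(p;h,h')$. The crucial structural input is disjointness: applying \Cref{lem:lift by 4} with $h$ replaced by $\tau_{k_j}(h)$ and shift parameter $k_{j'}-k_j\ge 2$ for $j<j'$, any vertex in $\partial R(\tau_{k_j}(h)\neq h')$ is absent from $\partial R(\tau_{k_{j'}}(h)\neq h')$. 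Hence the sets
\[
    \partial R(\tau_{k_j}(h)\neq h'), \qquad j=0,1,\dots,J,
\]
are pairwise disjoint. Since $\partial R_p(\tau_{k_j}(h)\neq h') \subseteq \partial R(\tau_{k_j}(h)\neq h')$, the corresponding connected-component boundaries intersected with $\Lambda_{M,N}$ form a pairwise disjoint family of subsets of $\Lambda_{M,N}$, a set of cardinality $MN$.

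The pigeonhole step then runs as follows. Suppose, toward contradiction, that $k^*_{M,N}(p;h,h') > \lceil (r'-r)/2\rceil + (MN)^{2/5}$. Then none of the $k_j$ satisfies the bound in \eqref{eq:def k}, so for every $j=0,\dots,J$,
\[
    \left|\partial R_p(\tau_{k_j}(h)\neq h')\cap \Lambda_{M,N}\right| > (MN)^{7/8}.
\]
Using the disjointness established above, summing over $j$ gives $(J+1)(MN)^{7/8} \ge \tfrac{1}{2}(MN)^{2/5+7/8} = \tfrac{1}{2}(MN)^{51/40}$. Since $51/40 > 1$ and $M,N>8$ forces $MN\ge 81$, a direct check shows $\tfrac{1}{2}(MN)^{51/40} > MN$, contradicting $|\Lambda_{M,N}|=MN$. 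This yields the upper bound.

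The only step that requires any care is the application of \Cref{lem:lift by 4} across the whole progression rather than to consecutive terms — but this is essentially free since every gap $k_{j'}-k_j$ is at least $2$, which is exactly the hypothesis of that lemma. The choice of the exponents $7/8$ and $2/5$ is tailored so that $\tfrac{7}{8}+\tfrac{2}{5}=\tfrac{51}{40}>1$, which is what makes the pigeonhole work while simultaneously keeping the shift $(MN)^{2/5}$ small relative to the macroscopic scale; no other choice-specific issue arises.
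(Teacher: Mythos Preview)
Your proof is correct and follows essentially the same pigeonhole argument as the paper: both use \Cref{lem:lift by 4} to obtain disjointness of the boundary sets for shifts differing by at least $2$, then count against $|\Lambda_{M,N}|=MN$ to force some shift in the range to have small boundary. The only cosmetic difference is that you restrict from the outset to a single parity class $k_j=\lceil(r'-r)/2\rceil+2j$, whereas the paper sums over all consecutive $k$ in the range and splits into two parity classes (incurring a factor of $2$); both routes reduce to the same numerical constraint $(MN)^{11/40}>2$.
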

\begin{proof}
    For any $p\in \Lambda'$, pairs of height functions $h,h'$, and for $i\in \{0,1\}$, the sets $\left\{ \partial R_p(\tau_{2k+i}(h) \neq h') \right\}_{k \in \mathbb{Z}}$
    form a disjoint family, by \Cref{lem:lift by 4}. Thus,
   \begin{equation*}
       \sum_{k=\left\lceil \frac{r'-r}{2} \right\rceil}^{\left\lceil \frac{r'-r}{2} \right\rceil + (MN)^{2/5}} \left| \partial R_p(\tau_{k}(h)\neq h') \cap \Lambda_{M,N} \right| \le 2 |\Lambda_{M,N}| \le 2MN.
   \end{equation*}
   Then, whenever $(MN)^{\frac{4}{5}+\frac{2}{5}-1}=(MN)^{\frac{1}{5}}>2$, which is guaranteed by our choice of $M,N$, we can be sure that not all of the $(MN)^{2/5}$ terms in the above summation can be greater than $(MN)^{4/5}$, by a straightforward counting argument. This proves the bound \eqref{eq:bound k}.
\end{proof}

\begin{definition}[Involution of surfaces] \label{iotamap}
    For $p\in \Lambda'$ and two height functions $h,h' \in \mathscr{H}$, we define 
    \begin{equation*}
        \iota_p(h,h'):= (\overline{h},\overline{h}') \in \mathscr{H} \times \mathscr{H},
    \end{equation*}
    where
    \begin{equation} \label{eq:def ip}
        (\overline{h}(z),\overline{h}'(z)) := 
        \begin{cases}
            (h'(z),h(z)) \qquad & \text{if } z \in R_p(h \neq h')
            \\
            (h(z),h'(z)) \qquad & \text{else}.
        \end{cases}
    \end{equation}
    If $h(p)=h'(p)$ we set $\iota_p (h,h'):=(h,h')$.
\end{definition}
Note that $(\overline{h},\overline{h}')$, defined via \eqref{eq:def ip}, indeed lies in $\mathscr{H}\times \mathscr{H}$ as on the faces that are not in $R_p(h\neq h')$ but are adjacent to some face in $R_p(h \neq h')$, $h$ and $h'$ are equal. In words, the $\iota_p$ swaps the height function $h$ and $h'$ in the region $R_p(h\neq h')$. Clearly, it is an involution.

\begin{definition}[Bijective contraction maps] \label{upsilonmap}
    For $p\in \Lambda'$, $k \in \mathbb{Z}$ and two height functions $h,h' \in \mathscr{H}$, we define the map $\Upsilon_p^k:\mathscr{H} \times \mathscr{H} \to \mathscr{H} \times \mathscr{H}$ as
    \begin{equation*}
        \Upsilon_p^k := (\tau_{-k} \times \mathrm{Id}) \circ \iota_p \circ (\tau_{k} \times \mathrm{Id}),
    \end{equation*}
    where $(\tau_k \times \mathrm{Id})(h,h') = ( \tau_k(h),h' )$.
\end{definition}

As $\iota_p$ is an involution, it is clear that for every $k\in \mathbb{Z}$ and $p \in \mathbb{Z}$ the map $\Upsilon_p^k$ is a bijection on the space of pairs of height functions with inverse given by $\Upsilon_p^{-k}$.  A closer look to the map $\Upsilon_p^k$ shows that, calling $(\widetilde{h},\widetilde{h}') = \Upsilon_p^k(h,h')$, we have
    \begin{equation} \label{eq:shrinkage at p}
         \widetilde{h}(p)=h'(p)-k, \widetilde{h}'(p)= h(p)+k \mbox{ whenever } h'(p)\ge h(p)+k.
    \end{equation}
    Thus whenever $h'(p)$ and $h(p)$ are far apart, the $\Upsilon_p^k$ map brings the heights at $p$ closer. For this reason, we call it a \textit{contraction} map. 
A further property of the map $\Upsilon_p^k$ stating that it preserves the step initial condition, which will be instrumental to establish convexity properties of the law of the six vertex model height function, is stated next.

\begin{proposition} \label{prop:contraction property}
    Fix $h,h' \in \mathscr{H}_0$ (recall the definition from \eqref{eq: H0}) and $p\in \Lambda'$. Suppose $h(p)=r$, $h'(p)=r'$ with $r'>r$. Then for all $k \in \{ 0, \dots, r'-r \}$,  $\Upsilon_p^k(h,h') \in \mathscr{H}_0^{}$.
\end{proposition}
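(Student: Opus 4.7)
The plan is to decompose the map $\Upsilon_p^k = (\tau_{-k}\times \mathrm{Id}) \circ \iota_p \circ (\tau_k \times \mathrm{Id})$ into its three constituent steps and verify separately that the output satisfies the two requirements of membership in $\mathscr{H}_0$: the step boundary values on the coordinate axes, and the discrete gradient constraints $\nabla_1 \in \{0,-1\}$, $\nabla_2 \in \{0,1\}$ that define $\mathscr{H}$.

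The trivial case $k=r'-r$ can be disposed of immediately: here $\tau_k(h)(p)=r'=h'(p)$, so the convention in \Cref{iotamap} forces $\iota_p$ to act as the identity, giving $\Upsilon_p^k(h,h')=(h,h')\in \mathscr{H}_0\times \mathscr{H}_0$. Henceforth I assume $0\le k<r'-r$, so that $\tau_k(h)(p)<h'(p)$ and $p\in R(\tau_k(h)<h')$.

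For the boundary values, I would observe that on the horizontal axis of faces $A_1:=\{(m,1/2):m\ge 1/2\}$ we have $\tau_k(h)\equiv k\ge 0 = h'$, and on the vertical axis $A_2:=\{(1/2,m):m\ge 1/2\}$ we have $\tau_k(h)(1/2,m)=m-1/2+k\ge m-1/2=h'(1/2,m)$. Hence no face of $A_1\cup A_2$ belongs to $R(\tau_k(h)<h')$. By \Cref{lem:adjacent faces} the regions $R(\tau_k(h)<h')$ and $R(h'<\tau_k(h))$ have no pair of adjacent faces, so every connected component of $R(\tau_k(h)\neq h')$ sits entirely in one of them; in particular $R_p(\tau_k(h)\neq h')\subseteq R(\tau_k(h)<h')$ and is disjoint from $A_1\cup A_2$. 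Consequently $\iota_p$ leaves both heights unchanged on the axes, applying $\tau_{-k}$ in the first coordinate restores the original step values of $h$ on $A_1,A_2$, and $\widetilde h'=\overline h'$ inherits the correct step boundary directly from $h'$.

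For the gradient constraints, inside and outside $R_p(\tau_k(h)\neq h')$ each of $\overline h,\overline h'$ agrees pointwise with one of $h',\tau_k(h)\in \mathscr{H}$, so the constraints hold there automatically. The only delicate check is at an edge $(p,p')$ with $p$ inside $R_p(\tau_k(h)\neq h')$ and $p'$ outside; here \Cref{lem:adjacent faces} pins down $h'(p)=\tau_k(h)(p)+1$ and $h'(p')=\tau_k(h)(p')$. For $\overline h'$ the verification is immediate since $\overline h'(p')-\overline h'(p)=h'(p')-\tau_k(h)(p)=\tau_k(h)(p')-\tau_k(h)(p)$ is a valid gradient of $\tau_k(h)$. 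For $\overline h$, imposing the gradient constraints of \emph{both} $\tau_k(h)$ and $h'$ simultaneously across the edge $(p,p')$ forces $\tau_k(h)(p')-\tau_k(h)(p)$ to take the unique value compatible with both, and a short inspection (one case per direction $\pm \mathbb{e}_1,\pm \mathbb{e}_2$) confirms that $\overline h(p')-\overline h(p)$ then sits in the prescribed range. Since $\tau_{-k}$ preserves gradients, I conclude $(\widetilde h,\widetilde h')\in \mathscr{H}_0 \times \mathscr{H}_0$.

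The main obstacle I anticipate is this last edge-by-edge gradient check: one has to convince oneself that the one-step jump of $h'$ over $\tau_k(h)$ on the inside face, together with their equality on the outside face, indeed pins down the ``free'' increment of $\tau_k(h)$ to the single value that absorbs the swap. Fortunately the rigidity provided by \Cref{lem:adjacent faces} reduces this to four essentially one-line checks.
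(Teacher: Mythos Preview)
Your proof is correct and follows the same route as the paper: show that the connected component $R_p(\tau_k(h)\neq h')$ lies entirely in $R(\tau_k(h)<h')$ and hence avoids the boundary axes $B_\llcorner$, so the swap followed by the shift $\tau_{-k}$ restores the step boundary values. The paper's argument is shorter only because the fact that $\iota_p$ lands in $\mathscr{H}\times\mathscr{H}$ was already recorded immediately after \Cref{iotamap} (faces adjacent to but outside $R_p(h\neq h')$ satisfy $h=h'$), so your separate edge-by-edge gradient verification is correct but redundant; incidentally, your ``delicate'' check for $\overline h$ collapses to one line without case analysis, since $\tau_k(h)(p')=h'(p')$ gives $\overline h(p')-\overline h(p)=\tau_k(h)(p')-h'(p)=h'(p')-h'(p)$, a valid gradient of $h'\in\mathscr{H}$.
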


\begin{proof}
    Consider the left and bottom boundary of the lattice $\Lambda'$
    \begin{equation*}
        B_{\llcorner} = \left\{ (i',j') \in \Lambda' : \min(i',j')=1/2 \right\}.
    \end{equation*}
    In case $k \in \{ 0, \dots, r'-r \}$ we have $h(p)+k = r+k \le r' = h'(p).$
    On the other hand for each boundary face $v \in B_{\llcorner}$ we have $h(v) + k \ge h'(v)$.  Therefore $R_p(\tau_k(h) \neq h')$ has empty intersection with the boundary $B_{\llcorner}$ and calling $(\overline{h}, \overline{h}') = \iota_p (\tau_k(h),h')$ we have
    \begin{equation*}
        \overline{h}(p) = h(p)+k, \qquad \overline{h}'(p) = h'(p) \qquad \text{for all } p \in B_{\llcorner}
    \end{equation*}
    and as a result for $(\widetilde{h},\widetilde{h}')=(\tau_{-k}(\overline{h}), \overline{h}')$ we have
    \begin{equation*}
        \widetilde{h}(p) = h(p) \qquad \widetilde{h}'(p) = h'(p) \qquad \text{for all } p \in B_{\llcorner}.
    \end{equation*}
    Therefore boundary conditions of $h,h',\widetilde{h},\widetilde{h}'$ are the same, completing the proof.
\end{proof}

In words, \Cref{prop:contraction property} allows us to transform a pair $(h,h')$ of height functions in $\mathscr{H}_0^{}$ which assume values far apart at a fixed point $p$ into a pair $(\widetilde{h},\widetilde{h}')$, where both heights still lie in $\mathscr{H}_0^{}$, but their value at $p$ is closer by $2k$ as prescribed by \eqref{eq:shrinkage at p}.

\subsection{Log-concavity bounds}\label{sec3.2} In this section, we prove weak log-concavity bounds for the tail probabilities of the stochastic six vertex model. Towards this end, we first view the S6V height function $\h$ as a function on the faces by setting $$\h(p):=\h(p-\mathbb{e}_1/2-\mathbb{e}_2/2)$$ for all $v\in \Lambda'$ where the right-hand side is defined via \eqref{eq:height}. The Markov evolution rules for vertex configurations defining the S6V model can clearly be phrased in terms of height functions, as shown by \Cref{fig:vertex weights}. Recall the notation of adjacent faces from \eqref{dir}. Given a height function $h \in \mathscr{H}$ and any vertex $v \in \Lambda$ we introduce the notation
    \begin{equation*}
        w_{v}(h) = w \left( -\nabla_1 h (\mathsf{SW}(v) ), \nabla_2 h(\mathsf{SW}(v)) ; -\nabla_1 h(\mathsf{NW}(v)), \nabla_2 h(\mathsf{SE}(v))  \right),
    \end{equation*}
where  $w(i_1,j_1;i_2,j_2)$ are vertex weights from \Cref{fig:vertex weights}. $w_v(h)$ is precisely the stochastic weight of the vertex $v$  corresponding to a given height function $h$. With a slight abuse of notation, for any finite set of vertices $V \subset \Lambda$ we define
    \begin{equation*}
        w_{V}(h) = \prod_{v \in V} w_{v}(h).
    \end{equation*}
We shall call $w_{V}(h)$ the Boltzmann weight of the height $h$ restricted to $V$. With this notation in place, it is easy to see that for all $h \in \mathscr{H}_0^{}$ we have
\begin{align}\label{probs6v}
    \Pr\big(\h(v)=h(v) \mbox{ for all } v\in \Lambda_{M,N}\big)=w_{\Lambda_{M,N}}(h).
\end{align}

\medskip

Our next two results shows certain weak log-concavity of the Boltzmann weight under the $\iota_p$ and $\Upsilon_p^k$ map defined in Definition \ref{iotamap} and \ref{upsilonmap} respectively.
Towards this end, we define the constant
\begin{equation} \label{eq:constant c}
    \mathsf{C} := a^{-2}q^{-2}(1-a)^{-2}(1-q)^{-2}(1-aq)^{4},
\end{equation}
where $a,q$ are the parameters of the S6V model. This constant has the property that
\begin{equation} \label{cineq}
    \frac{w_{v_1}(h_1)w_{v_2}(h_2)}{w_{v_3}(h_3)w_{v_4}(h_4)} \le \frac{1}{w_{v_3}(h_3)w_{v_4}(h_4)}  \le \mathsf{C}
\end{equation}
for all $v_i\in \Lambda'$ and $h_i\in \mathscr{H}$ which can be readily checked.

\begin{proposition}[Weak log-concavity under $\iota_p$ map] \label{prop:inequality involution}
    Fix $M,N\in \mathbb{Z}_{\ge 1}$ and $p\in \Lambda_{M,N}$. Consider $h,h' \in \mathscr{H}$ and let $ (\overline{h},\overline{h}') = \iota_p(h,h')$. Then
    \begin{equation*}
        w_{\Lambda_{M,N}}(h) w_{\Lambda_{M,N}} (h') \le  \mathsf{C}^{\#\left| \partial R_p(h \neq h') \cap \Lambda_{M,N} \right|} w_{\Lambda_{M,N}} (\overline{h}) w_{\Lambda_{M,N}} (\overline{h}').
    \end{equation*}
\end{proposition}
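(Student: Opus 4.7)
The plan is to exploit the locality of the Boltzmann weights: $w_v(h)$ depends on $h$ only through its restriction to the four adjacent faces $\mathsf{NW}(v), \mathsf{NE}(v), \mathsf{SW}(v), \mathsf{SE}(v)$. Accordingly, I would classify each vertex $v \in \Lambda_{M,N}$ into three mutually exclusive classes based on how its four adjacent faces meet $R_p(h \neq h')$: (A) all four adjacent faces lie outside $R_p(h \neq h')$; (B) all four adjacent faces lie inside $R_p(h \neq h')$; and (C) the mixed case, which by definition of the boundary is precisely $v \in \partial R_p(h \neq h') \cap \Lambda_{M,N}$.

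For $v$ in class (A), the involution $\iota_p$ leaves $h$ and $h'$ unchanged at all four adjacent faces, so $w_v(h) = w_v(\overline{h})$ and $w_v(h') = w_v(\overline{h}')$. For $v$ in class (B), the definition \eqref{eq:def ip} gives $\overline{h}(q) = h'(q)$ and $\overline{h}'(q) = h(q)$ at each of the four adjacent faces $q$, hence $w_v(\overline{h}) = w_v(h')$ and $w_v(\overline{h}') = w_v(h)$. In either case
$$w_v(h)\, w_v(h') = w_v(\overline{h})\, w_v(\overline{h}'),$$
so vertices in classes (A) and (B) contribute an exact equality to the vertex-wise factorization of the product, and thus nothing to the constant $\mathsf{C}$.

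For class (C) vertices the two products may genuinely differ, and here I would invoke the uniform bound \eqref{cineq} applied with $h_1 = h, h_2 = h', h_3 = \overline{h}, h_4 = \overline{h}'$ and $v_1 = v_2 = v_3 = v_4 = v$, which yields $w_v(h)\, w_v(h') \le \mathsf{C} \cdot w_v(\overline{h})\, w_v(\overline{h}')$. Taking the product of the vertex-level equalities from classes (A), (B) and the vertex-level inequalities from class (C) over all $v \in \Lambda_{M,N}$ yields the desired bound, with the prefactor $\mathsf{C}^{\#|\partial R_p(h \neq h') \cap \Lambda_{M,N}|}$ arising solely from the class (C) contributions.

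The main subtlety I would need to verify carefully is class (B): one must confirm that for any vertex $v$ whose four adjacent faces all lie in $R_p(h \neq h')$, the swap formula $\overline{h}(q) = h'(q)$ (and simultaneously $\overline{h}'(q) = h(q)$) holds uniformly at all four adjacent faces $q$. This is immediate from \eqref{eq:def ip} once one observes, as noted just after \Cref{lem:adjacent faces}, that a single connected component $R_p(h \neq h')$ is contained entirely inside either $R(h < h')$ or $R(h' < h)$, so the swap is applied coherently across the whole neighborhood of $v$. Once this local bookkeeping is in place, the proposition follows directly by multiplying the local bounds.
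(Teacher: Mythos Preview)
Your proposal is correct and follows essentially the same approach as the paper's proof: both arguments partition the vertices of $\Lambda_{M,N}$ according to whether all, none, or some of the four adjacent faces lie in $R_p(h\neq h')$, obtain exact equality $w_v(h)w_v(h')=w_v(\overline{h})w_v(\overline{h}')$ in the first two cases, and invoke the uniform ratio bound \eqref{cineq} for the boundary vertices. Your additional remark about the coherence of the swap (via the containment of $R_p(h\neq h')$ in a single region $R(h<h')$ or $R(h'<h)$) is not strictly needed, since \eqref{eq:def ip} already prescribes the swap uniformly on all of $R_p(h\neq h')$, but it does no harm.
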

\begin{proof}
     Let $v\in \Lambda$ be a vertex and let $\mathsf{SW}(v),\mathsf{SE}(v),\mathsf{NW}(v),\mathsf{NE}(v) \in \Lambda'$ be the quadruple of faces adjacent to $v$. To lighten the notation, we write $R_p$ to mean $R_p(h\neq h')$. If $\mathsf{SW}(v),\mathsf{SE}(v),\mathsf{NW}(v),\mathsf{NE}(v)$ all belong to $R_p^{\mathrm{c}}$ we have, by construction
    \begin{equation*}
        h(p) = \overline{h}(p), \qquad h'(p) = \overline{h}'(p),
        \qquad \text{for all } p\in \{ \mathsf{SW}(v),\mathsf{SE}(v),\mathsf{NW}(v),\mathsf{NE}(v) \},
    \end{equation*}
    which implies that $w_v(h) = w_v(\overline{h})$ and $w_v(h') = w_v(\overline{h}').$
    Similarly, if $\mathsf{SW}(v),\mathsf{SE}(v),\mathsf{NW}(v),\mathsf{NE}(v)$ all belong to $R_p$ we have, by construction
    \begin{equation*}
        h(p) = \overline{h}'(p), \qquad h'(p) = \overline{h}(p),
        \qquad \text{for all } p\in \{ \mathsf{SW}(v),\mathsf{SE}(v),\mathsf{NW}(v),\mathsf{NE}(v) \},
    \end{equation*}
    which implies that $w_v(h) = w_v(\overline{h}')$ and $w_v(h') = w_v(\overline{h})$.
    This shows that
    \begin{equation*}
        w_v(h) w_v(h') = w_v(\overline{h}) w_v(\overline{h}'), \qquad \text{if } v \notin \partial R_p
    \end{equation*}
    and hence
    \begin{equation} \label{eq:weights not on Rp}
        w_{\Lambda_{M,N} \setminus \partial R_p }(h) w_{\Lambda_{M,N} \setminus \partial R_p }(h') = w_{\Lambda_{M,N} \setminus \partial R_p }(\overline{h}) w_{\Lambda_{M,N} \setminus \partial R_p }(\overline{h}').
    \end{equation}
    On the other hand, when $v\in \partial R_p(h\neq h')$, in general $w_v(h) w_v(h') \neq w_v(\overline{h}) w_v(\overline{h}')$. However, due to the property \eqref{cineq} we have
    \begin{equation*}
        \frac{ w_v(h) w_v(h') }{ w_v(\overline{h}) w_v(\overline{h}') }  \le \mathsf{C}.
    \end{equation*}
    As a result we have 
    \begin{equation} \label{eq:weights on Rp}
    \begin{split}
        &w_{\partial R_p \cap \Lambda_{M,N} }(h) w_{\partial R_p \cap \Lambda_{M,N} }(h') 
         \le \mathsf{C}^{\#\left| \partial R_p\cap \Lambda_{M,N} \right|} w_{\partial R_p\cap \Lambda_{M,N} }(\overline{h}) w_{\partial R_p \cap \Lambda_{M,N} }(\overline{h}').
    \end{split}
    \end{equation}
    Combining \eqref{eq:weights not on Rp} and \eqref{eq:weights on Rp} completes the proof.
\end{proof}

We can easily upgrade the above proposition to $\Upsilon_p^k$ maps defined in Definition \ref{upsilonmap}.

\begin{proposition} \label{prop:bound weights contraction}
    Fix $M,N\in \mathbb{Z}_{\ge 1}$ and $p\in \Lambda_{M,N}$. Consider $h,h' \in \mathscr{H}$ and let $ (\widetilde{h},\widetilde{h}') = \Upsilon_p^k (h,h')$. Then
    \begin{equation*}
        w_{\Lambda_{M,N}}(h) w_{\Lambda_{M,N}} (h') \le  \mathsf{C}^{\#\left| \partial R_p(\tau_k(h)\neq h') \cap \Lambda_{M,N} \right|} w_{\Lambda_{M,N}} (\widetilde{h}) w_{\Lambda_{M,N}} (\widetilde{h}'),
    \end{equation*}
    where the constant $\mathsf{C}$ is defined in \eqref{eq:constant c}.
\end{proposition}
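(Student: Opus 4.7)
The plan is to reduce this statement to Proposition \ref{prop:inequality involution} (the weak log-concavity under the involution $\iota_p$) by exploiting the translation invariance of the Boltzmann weights. The main observation is that the vertex weights $w_v(h)$ depend on $h$ only through the four discrete gradients $\nabla_1 h(\mathsf{SW}(v)), \nabla_2 h(\mathsf{SW}(v)), \nabla_1 h(\mathsf{NW}(v)), \nabla_2 h(\mathsf{SE}(v))$ (they encode the arrow configuration at $v$, as per \Cref{fig:vertex weights}). Since $\tau_k$ is a global additive shift, it preserves all discrete gradients, hence $w_v(\tau_k(h)) = w_v(h)$ for every $v \in \Lambda$, and therefore
\begin{equation*}
    w_{\Lambda_{M,N}}(\tau_k(h)) = w_{\Lambda_{M,N}}(h), \qquad w_{\Lambda_{M,N}}(\tau_{-k}(\overline{h})) = w_{\Lambda_{M,N}}(\overline{h})
\end{equation*}
for any $h, \overline{h} \in \mathscr{H}$.

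With this shift invariance at hand, I would unpack the definition $\Upsilon_p^k = (\tau_{-k} \times \mathrm{Id}) \circ \iota_p \circ (\tau_k \times \mathrm{Id})$ step by step. Setting $(\overline{h},\overline{h}') := \iota_p(\tau_k(h), h')$, the target pair is $(\widetilde{h},\widetilde{h}') = (\tau_{-k}(\overline{h}), \overline{h}')$. Now I would apply \Cref{prop:inequality involution} to the pair $(\tau_k(h), h') \in \mathscr{H} \times \mathscr{H}$, obtaining
\begin{equation*}
    w_{\Lambda_{M,N}}(\tau_k(h))\, w_{\Lambda_{M,N}}(h') \le \mathsf{C}^{\#|\partial R_p(\tau_k(h)\neq h') \cap \Lambda_{M,N}|}\, w_{\Lambda_{M,N}}(\overline{h})\, w_{\Lambda_{M,N}}(\overline{h}').
\end{equation*}
Using shift invariance on both the left and right hand sides (on $\tau_k(h)$ on the left, on $\overline{h} = \tau_k(\widetilde{h})$ on the right), this immediately rewrites as
\begin{equation*}
    w_{\Lambda_{M,N}}(h)\, w_{\Lambda_{M,N}}(h') \le \mathsf{C}^{\#|\partial R_p(\tau_k(h)\neq h') \cap \Lambda_{M,N}|}\, w_{\Lambda_{M,N}}(\widetilde{h})\, w_{\Lambda_{M,N}}(\widetilde{h}'),
\end{equation*}
which is exactly the claim.

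There is no real obstacle here: the statement is essentially a corollary, and the only thing to verify carefully is that the vertex weights in \Cref{fig:vertex weights} truly depend only on the local gradients (equivalently on the arrow configuration at the vertex), so that $\tau_k$ acts trivially on $w_{\Lambda_{M,N}}$. The exponent in the constant $\mathsf{C}$ inherits the set $\partial R_p(\tau_k(h)\neq h') \cap \Lambda_{M,N}$ directly from \Cref{prop:inequality involution}, with no additional modification, since the discrepancy between $(h,h')$ and $(\widetilde{h},\widetilde{h}')$ is entirely localized — by construction — to the swap performed by $\iota_p$ inside $R_p(\tau_k(h)\neq h')$, whose boundary is the only place where vertex weights can fail to match after the reshuffle.
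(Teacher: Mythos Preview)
Your proposal is correct and follows exactly the same approach as the paper's proof: apply \Cref{prop:inequality involution} to the shifted pair $(\tau_k(h),h')$ and use that the vertex weights $w_v$ depend only on the local gradients of the height function, so that $w_{\Lambda_{M,N}}(\tau_k(h))=w_{\Lambda_{M,N}}(h)$ for all $k$. The paper states this in one sentence; you have simply written out the details more explicitly.
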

\begin{proof}
    This is an immediate consequence of \Cref{prop:inequality involution} after noticing that for every height $h \in \mathscr{H}$ and any finite set $A$, for every $k\in \mathbb{Z}$, we have $w_A(h) = w_A(\tau_k(h))$.
\end{proof}

Since the above proposition holds for all $k$, using Lemma \ref{lem:bound k}, we may choose a $k$ for which $\#\left| \partial R_p(\tau_k(h)\neq h') \cap \Lambda_{M,N} \right|$ is of smaller order than $MN$. This will allow us to deduce a density version of \Cref{t.main0}.

\begin{proposition} \label{prop: weak log concavity}
    Fix $M,N \ge 8$ and $r,r'\in \Z_{\ge 0}$. Then, we have
    \begin{equation} \label{eq: weak log concavity}
        \begin{split}
            & \mathbb{P}(\h(M,N)=r) \mathbb{P}(\h(M,N)=r')
            \le \mathsf{C}^{(MN)^{4/5}} \mathbb{P} \left(  \left| \h(M,N) - \frac{r+r'}{2} \right| \le 4(MN)^{2/5} \bigg| \right)^2,
        \end{split}
    \end{equation}
   where the coefficient $\mathsf{C}$ is given in \eqref{eq:constant c}. 
\end{proposition}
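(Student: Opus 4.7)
The plan is to translate \eqref{eq: weak log concavity} into an injective weighted comparison between pairs of height configurations, exploiting the contraction map $\Upsilon_{(M,N)}^{k}$ from \Cref{upsilonmap} together with the weight bound of \Cref{prop:bound weights contraction}. Using \eqref{probs6v}, the left-hand side of \eqref{eq: weak log concavity} can be rewritten as $\sum_{(h,h')\in \mathcal{S}_{r,r'}} w_{\Lambda_{M,N}}(h)\,w_{\Lambda_{M,N}}(h')$, where $\mathcal{S}_{r,r'} := \{(h,h')\in\mathscr{H}_0\times\mathscr{H}_0: h(M,N)=r,\ h'(M,N)=r'\}$. Because the bound is symmetric under $r\leftrightarrow r'$, I may assume $r\le r'$; the case $r=r'$ is immediate, and if either of $r,r'$ exceeds $N$ the left-hand side vanishes, so I focus on $r<r'\le N$.

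I split on the size of the gap $r'-r$. When $r'-r \le 2(MN)^{2/5}+2$, both $r$ and $r'$ themselves lie within $(MN)^{2/5}+1\le 4(MN)^{2/5}$ of the midpoint $(r+r')/2$ (using $M,N>8$), so each factor on the left-hand side is bounded by $\mathbb{P}(|\h(M,N)-(r+r')/2|\le 4(MN)^{2/5})$ and \eqref{eq: weak log concavity} follows without even invoking the constant $\mathsf{C}$. When $r'-r > 2(MN)^{2/5}+2$, to each $(h,h')\in \mathcal{S}_{r,r'}$ I associate $k^*:=k^*_{M,N}((M,N);h,h')$ defined in \eqref{eq:def k}. \Cref{lem:bound k} gives $\lceil (r'-r)/2\rceil \le k^* \le \lceil (r'-r)/2\rceil + (MN)^{2/5}$, and under the case hypothesis the upper bound is strictly less than $r'-r$; in particular \Cref{prop:contraction property} applies and $(\widetilde{h},\widetilde{h}'):=\Upsilon_{(M,N)}^{k^*}(h,h')\in \mathscr{H}_0\times\mathscr{H}_0$. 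Since $\tau_{k^*}(h)(M,N)=r+k^*<r'$, the face $(M,N)$ lies in $R_{(M,N)}(\tau_{k^*}(h)<h')$, so \eqref{eq:shrinkage at p} gives $\widetilde{h}(M,N)=r'-k^*$ and $\widetilde{h}'(M,N)=r+k^*$, both within $(MN)^{2/5}+1/2 \le 4(MN)^{2/5}$ of $(r+r')/2$. Combining the defining property of $k^*$ with \Cref{prop:bound weights contraction} yields
\begin{equation*}
    w_{\Lambda_{M,N}}(h)\,w_{\Lambda_{M,N}}(h') \le \mathsf{C}^{(MN)^{7/8}}\,w_{\Lambda_{M,N}}(\widetilde{h})\,w_{\Lambda_{M,N}}(\widetilde{h}').
\end{equation*}

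The main obstacle is the injectivity of the map $(h,h')\mapsto (\widetilde{h},\widetilde{h}')$ on $\mathcal{S}_{r,r'}$, which is non-trivial because $k^*$ depends on the input pair. My key observation is that, with $r$ fixed throughout $\mathcal{S}_{r,r'}$, the identity $\widetilde{h}'(M,N)=r+k^*$ allows $k^*$ to be recovered from the image, after which the involutive identity $\Upsilon_p^{k}\circ \Upsilon_p^{k} = \mathrm{Id}$ (a direct consequence of $\iota_p$ being an involution) recovers $(h,h')=\Upsilon_{(M,N)}^{k^*}(\widetilde{h},\widetilde{h}')$. Summing the weight inequality over $\mathcal{S}_{r,r'}$ and bounding the image by the set of all $(\widetilde{h},\widetilde{h}')\in\mathscr{H}_0\times\mathscr{H}_0$ whose heights at $(M,N)$ both lie within $4(MN)^{2/5}$ of $(r+r')/2$, the resulting double sum factors into $\mathsf{C}^{(MN)^{7/8}}\,\mathbb{P}(|\h(M,N)-(r+r')/2|\le 4(MN)^{2/5})^{2}$, completing the proof.
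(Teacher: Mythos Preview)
Your proposal is correct and follows essentially the same route as the paper's own proof: the same split into a trivial small-gap case and an injective-map case, the same choice of $k^*$ from \Cref{lem:bound k}, the same use of \Cref{prop:contraction property}, \eqref{eq:shrinkage at p}, and \Cref{prop:bound weights contraction}, and the same injectivity argument via recovery of $k^*$ from the image height at $(M,N)$. The only cosmetic differences are your slightly tighter small-gap threshold and your recovery of $k^*$ from $\widetilde{h}'(M,N)=r+k^*$ rather than from $\widetilde{h}(M,N)=r'-k^*$; both are equally valid.
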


\begin{proof}[Proof of \Cref{prop: weak log concavity}] Without loss of generality, assume $r<r'$.
    When $\frac{r'-r}{2} \le 4(MN)^{2/5}$, 
    the event
    \begin{equation*}
        \left\{\left| \h(M,N) - \frac{r+r'}{2} \right| \le 4(MN)^{2/5} \right\},
    \end{equation*}
     contains both the events $\{\h(M,N)=r \}$ and $\{\h(M,N)=r' \}$. Hence \eqref{eq: weak log concavity} is trivial. So, let us assume $\frac{r'-r}{2} >4(MN)^{2/5}$. Set $p=(M,N)\in \Lambda$ and consider the set
     $$T_{p;r,r'}:=\big\{ (h,h')\in \mathscr{H}_0^{}\times \mathscr{H}_0^{} : h(p)=r, h'(p)=r'\big\}.$$
     Using \eqref{probs6v}, we may write the left-hand side of \eqref{eq: weak log concavity} as follows.
     \begin{align}\label{eq:ltr}
         \mathbb{P}(\h(M,N)=r) \mathbb{P}(\h(M,N)=r')=\sum_{(h,h')\in T_{p;r,r'} } w_{\Lambda_{M,N}}(h) w_{\Lambda_{M,N}} (h'). 
     \end{align}
   Consider the $\Upsilon_{p}^{k^*}$ map restricted to $T_{p;r,r'}$ with $k^*=k_{M,N}^*(p;h,h')$ coming from \eqref{eq:def k}. Since $\frac{r'-r}{2} >4(MN)^{2/5}$, by the bound on $k^*$ from \eqref{eq:bound k}, we have that $k^* \le r'-r=h'(p)-h(p)$ for all $(h,h')\in T_{p;r,r'}$. Hence the condition in \eqref{eq:shrinkage at p} holds. By \eqref{eq:shrinkage at p} and the bound on $k^*$ from \eqref{eq:bound k} we have that
    \begin{equation}\label{eqbdf}
        \frac{r+r'}{2} - 4(MN)^{2/5} \le \widetilde{h}(p) , \widetilde{h}'(p) \le \frac{r+r'}{2} + 4(MN)^{2/5}
    \end{equation}
    for all $(\widetilde{h},\widetilde{h}') \in \Upsilon_{p}^{k^*}(T_{p;r,r'})$. Moreover, by definition of $k^*=k_{M,N}^*(p;h,h')$ from \eqref{eq:def k} we have
    \begin{equation*}
        \#\left| \partial R_{p}(\tau_{k^*}(h) \neq h') \cap \Lambda_{M,N} \right| \le (MN)^{4/5},
    \end{equation*}
    which shows, using \Cref{prop:bound weights contraction}, that
    \begin{equation} \label{eq:log concavity h htilde}
        w_{\Lambda_{M,N}}(h) w_{\Lambda_{M,N}} (h') \le  \mathsf{C}^{(MN)^{4/5}} w_{\Lambda_{M,N}} (\widetilde{h}') w_{\Lambda_{M,N}} (\widetilde{h}').
    \end{equation}
We claim that $\Upsilon_{p}^{k^*}$ is injective when restricted to $T_{p;r,r'}$ (note that $k^*$ depends on $(h,h')$).  Indeed, if $(\widetilde{h},\widetilde{h}')=\Upsilon_p^{k_1^*}(h_1,h_1')=\Upsilon_p^{k_2^*}(h_2,h_2')$, we have $\widetilde{h}(p)=h_1'(p)-k_1^*=h_2'(p)-k_2^*$. But on $T_{p;r,r'}$, $h_1'(p)=h_2'(p)=r'$. Thus $k_1^*=k_2^*$. Now injectivity of $\Upsilon_p^k$ for fixed $k$ leads to $(h_1,h_1')=(h_2,h_2')$.  Thus,
    \begin{equation} \label{eq:weak log concavity expansion}
    \begin{split}
       \mbox{r.h.s.~of \eqref{eq:ltr}} & \le 
        \mathsf{C}^{(NM)^{4/5}} \sum_{ \substack{ (h,h')\in T_{p;r,r'} \\ (\widetilde{h},\widetilde{h}') =\Upsilon_p^{k^*}(h,h')} }  w_{\Lambda_{M,N}}(\widetilde{h}) w_{\Lambda_{M,N}} (\widetilde{h}')  \\ & \le 
\mathsf{C}^{(NM)^{4/5}}\sum_{\substack{\widetilde{h} \in \mathscr{H}_0^{} \\ |\widetilde{h}(p)-\frac{r+r'}2|\le 4(MN)^{2/5}}} \,\, \sum_{\substack{\widetilde{h}'\in \mathscr{H}_0^{} \\ |\widetilde{h}'(p)-\frac{r+r'}2|\le 4(MN)^{2/5}}}  w_{\Lambda_{M,N}}(\widetilde{h}) w_{\Lambda_{M,N}} (\widetilde{h}')
    \end{split}
    \end{equation}
    where in the first inequality we used \eqref{eq:log concavity h htilde} and in the second inequality we used \eqref{eqbdf} and injectivity of $\Upsilon_p^{k^*}$. This completes the proof after recognizing that the right hand side of \eqref{eq:weak log concavity expansion} equals the right hand side of \eqref{eq: weak log concavity}.
\end{proof}

  \Cref{t.main0} now follows as a  corollary of \Cref{prop: weak log concavity}. 
\begin{proof}[Proof of \Cref{t.main0}]   Fix any $M,N \ge 8$ and $r_1, r_2\ge0$. We have $\Pr(\h(M,N)\ge r_i)=\Pr(\h(M,N)\ge \lceil r_i \rceil)$. Using the fact that $\h(M,N)$ can be at most $N$, we have the following chain of inequality
    \begin{equation*}
    \begin{split}
        \prod_{i=1}^2\Pr\big(\h(M,N) \ge  \lceil r_i \rceil \big) &= 
        \sum_{k_1 = \lceil r_1 \rceil}^N \sum_{k_2 = \lceil r_2 \rceil}^N \prod_{i=1}^2\Pr\big(\h(\lfloor \kappa N \rfloor,N) =  k_i \big)
        \\
        & \le \mathsf{C}^{(MN)^{4/5}} \sum_{k_1 = \lceil r_1 \rceil}^N \sum_{k_2 = \lceil r_2 \rceil}^N  \mathbb{P} \left(   \h(M,N)  \ge  \frac{k_1+k_2}{2} - 4(MN)^{2/5} \right)^2
        \\
        & \le N^2\mathsf{C}^{(MN)^{4/5}} \mathbb{P} \left(   \h(M,N)  \ge  \frac{r_1+r_2}{2} - 4(MN)^{2/5} \right)^2.
    \end{split}
    \end{equation*}
   The first inequality is due to \Cref{prop: weak log concavity} and in the second inequality we simply used the fact that $m\mapsto \mathbb{P} \left(   \h(\lfloor \kappa N \rfloor,N)  \ge m \right)$ is a  decreasing function and hence all terms of the summation over $k_1,k_2$ can be bounded with the first term $k_1=\lceil r_1 \rceil, k_2=\lceil r_2 \rceil$.
\end{proof}

\section{Large Deviation Principle for multiplicative expectation of the Schur measure}\label{sec: LDP_schur}

In this section, we prove the lower-tail LDP for the shifted height function and discuss some properties of the rate function $\mathcal{F}_\kappa(s)$ defined in \eqref{def:fs}. In \Cref{sec.2.1}, we show the existence of lower-tail rate function using the moment matching formula from \cite{borodin2016stochastic_MM}. In \Cref{sec.2.2}, we establish the parabolic behavior of $\mathcal{F}_\kappa(s)$ for large enough $s$. As explained in the introduction, our argument in this section heavily relies on potential theory tools. We shall introduce them in the text as needed.

\subsection{Existence of lower tail rate function $\mathcal{F}_\kappa(s)$} \label{sec.2.1} We introduce a few notations to explain the moment matching formula from \cite{borodin2016stochastic_MM}. 

A partition $\lambda$ with at most $n$ non-zero elements is a decreasing sequence of non-negative integers $\lambda_1 \ge \lambda_2 \ge \cdots \ge\lambda_n \ge 0$ and $\lambda_{n+1}=\lambda_{n+2}=\cdots=0$. We define the size of such a partition to be sum of its elements $|\lambda|:=\lambda_1+\lambda_2+\cdots+\lambda_n$ and denote the set of all such partitions by $\mathbb{Y}^n$. For later purposes, we also introduce the set
\begin{align}
    \label{defwn}
    \mathbb{W}^n := \left\{\ell =(\ell_1,\ell_2,\ldots,\ell_n) \mid \exists \ \lambda\in \mathbb{Y}^n, \mbox{ such that } \ell_i=\lambda_i+n-i \mbox{ for all } i\in \{1,2,\ldots,n\}\right\}.
\end{align}
Given $a\in (0,1)$, $z,z'\in \Z_{\ge 1}$, $z$-measures are probability measures on $\mathbb{Y}^{\min(z,z')}$ defined through
\begin{equation}\label{eq:schurmeasure}
    \mathfrak{M}(a;z,z')(\lambda):= (1-a)^{zz'}a^{|\lambda|}s_\lambda(1^z)s_\lambda(1^{z'}),
\end{equation}
where $s_{\lambda}(x_1,x_2,\ldots,x_n)$ are Schur polynomials (c.f.~\cite{Macdonald1995}) and in particular we have 
\begin{equation*}
    s_\lambda(1^{K})=s_{\lambda}(1,1,\ldots,1) = \prod_{1\leq i<j\leq K}\frac{\lambda_i-i-\lambda_j+j}{j-i}.
\end{equation*}
$z$-measures were first used in the context of harmonic
analysis on the infinite symmetric group in \cite{Kerov1993}. They are a special case of Schur measures introduced in \cite{okounkov2001infinite} and  are related to Meixner ensembles (see \cite{johansson2000shape} for more details).  

It was observed by Borodin \cite{borodin2016stochastic_MM,BO2016_ASEP} that the $q$-Laplace transform of the height function of S6V coincides with the expectation of a certain multiplicative functional of the $z$-measures. We recall this result as follows.

\begin{proposition}[Proposition 8.4 in \cite{BO2016_ASEP}]
    Let $M,N\geq 1$ and recall the height function $\h(M,N)$ from \eqref{eq:height}. Then for any $\zeta>0$ we have 
    \begin{equation}\label{e.mommatch}
        \mathbb{E} \left[\prod_{i=0}^\infty\frac{1}{1+\zeta q^{\mathfrak{h}(M,N)+i}}\right]=\left[\prod_{j=0}^{\infty} \frac1{1+\zeta q^j}\right]\mathbb{E}_{\gint}\left[\prod_{j=0}^{N-1} (1+\zeta q^{\lambda_{N-j}+j})\right].
    \end{equation}
\end{proposition}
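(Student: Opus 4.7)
The plan is to reduce \eqref{e.mommatch} to a matching of $q$-moments via the $q$-exponential identity, and then establish this match using the integrable structure on both sides. Concretely, I would invoke the identity
$$\prod_{i=0}^\infty \frac{1}{1+\zeta q^{x+i}} = \sum_{k=0}^\infty \frac{(-\zeta)^k q^{kx}}{(q;q)_k}$$
to expand the left-hand side of \eqref{e.mommatch} as $\sum_{k\ge 0}(q;q)_k^{-1}(-\zeta)^k\, \mathbb{E}[q^{k\mathfrak{h}(M,N)}]$. Applying the same identity to the prefactor $\prod_{j\ge 0}(1+\zeta q^j)^{-1}$ on the right and clearing denominators, the claim becomes the polynomial-in-$\zeta$ identity
$$\mathbb{E}\!\left[\prod_{i=0}^{\mathfrak{h}(M,N)-1}(1+\zeta q^i)\right] = \mathbb{E}_{\mathfrak{M}(a;N,M-1)}\!\left[\prod_{j=0}^{N-1}(1+\zeta q^{\lambda_{N-j}+j})\right],$$
and equating coefficients of $\zeta^k$ reduces the problem to matching the $k$-th elementary symmetric functions in the two random multisets $\{1,q,\ldots,q^{\mathfrak{h}(M,N)-1}\}$ and $\{q^{\lambda_{N-j}+j}\}_{j=0}^{N-1}$ for every $k\ge 0$.

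Next, I would bring integrability to bear on each side. On the S6V side, the $q$-moments $\mathbb{E}[q^{k\mathfrak{h}(M,N)}]$ admit nested contour integral representations coming from the general framework of higher-spin stochastic vertex models specialized at the six-vertex point, producing an explicit $k$-fold integral with rational integrand of $q$-Pochhammer type. On the Schur side, the positions $\{\lambda_{N-j}+j\}_{j=0}^{N-1}$ form the Meixner determinantal point process, whose multiplicative statistics (equivalently, expectations of elementary symmetric polynomials in $q^{\lambda_{N-j}+j}$) can be evaluated by Cauchy--Binet as a $k$-fold sum or contour integral against the Meixner correlation kernel. Standard $q$-hypergeometric manipulations and residue computations should then bring both expressions to the same $k$-fold contour integral, completing the match term-by-term.

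The main obstacle I anticipate is this contour-identification step: deforming the two $k$-fold integrals onto a common cycle, carefully tracking poles at $z_i=q^m$, and reconciling combinatorial prefactors via $q$-Gauss/$q$-Saalsch\"utz-type summations. A cleaner alternative, actually followed in \cite{borodin2016stochastic_MM,BO2016_ASEP}, is to argue at the level of Macdonald measures: the analogue of \eqref{e.mommatch} there is a duality between $q$-Hahn-type moment formulas for higher-spin stochastic vertex models and multiplicative observables of Macdonald measures; specializing the Macdonald parameter $t\to 0$ collapses the Macdonald measure to the $z$-measure $\mathfrak{M}(a;N,M-1)$, while tuning the remaining parameters so that the higher-spin model degenerates to S6V at parameters $(a,q)$ yields \eqref{e.mommatch} directly. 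This route packages the contour manipulations inside the algebraic identities already built into the Macdonald symmetric function machinery and avoids the manual matching that the direct approach requires.
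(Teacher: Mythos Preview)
The paper does not prove this proposition: it is quoted verbatim as Proposition 8.4 of \cite{BO2016_ASEP} and used as a black box. So there is no ``paper's own proof'' to compare against.

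Your reduction is sound. Multiplying both sides of \eqref{e.mommatch} by $\prod_{j\ge 0}(1+\zeta q^j)$ and using that $\mathfrak{h}(M,N)\in\{0,\dots,N\}$ does collapse the identity to the polynomial-in-$\zeta$ statement
\[
\mathbb{E}\Bigg[\prod_{i=0}^{\mathfrak{h}(M,N)-1}(1+\zeta q^i)\Bigg]
=
\mathbb{E}_{\mathfrak{M}(a;N,M-1)}\Bigg[\prod_{j=0}^{N-1}(1+\zeta q^{\lambda_{N-j}+j})\Bigg],
\]
and matching coefficients of $\zeta^k$ is equivalent to matching the $q$-moments $\mathbb{E}[q^{k\mathfrak{h}}]$ against difference-operator observables of the Schur measure. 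That is indeed how \cite{borodin2016stochastic_MM,BO2016_ASEP} proceed: nested contour integral formulas for $\mathbb{E}[q^{k\mathfrak{h}}]$ on the S6V side are identified with the action of Macdonald difference operators on the partition function on the symmetric-function side.

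One correction to your ``cleaner alternative'': the Schur measure is the $q=t$ (not $t\to 0$) specialization of the Macdonald measure. The $t\to 0$ limit gives the $q$-Whittaker measure. In \cite{borodin2016stochastic_MM} the general moment match is stated at the Macdonald level, and the specialization producing \eqref{e.mommatch} sets $q=t$ so that Macdonald polynomials become Schur polynomials and the relevant eigenvalue of the first Macdonald operator becomes $\sum_j q^{\lambda_j+N-j}$, which is exactly the multiplicative functional appearing on the right. Aside from this mislabeling of the limit, your outline matches the route actually taken in the cited references.
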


We shall see in \Cref{sec.4} how the left-hand side can be viewed as a lower tail of the height function after a random shift.
The above formula is our starting point for the asymptotics of $q$-Laplace transform of the height function:
 \begin{theorem}\label{p.multldp}
    For each $s\in \R$ and $\kappa>0$ we have 
    \begin{equation}\label{e.multldp}
        -\lim_{N\to \infty} \frac{1}{N^2}\log \mathbb{E}\left[\prod_{i=0}^\infty\frac{1}{1+ q^{\mathfrak{h}(\lfloor\kappa N\rfloor,N)+i-sN}}\right] = \mathcal{F}_\kappa(s),
    \end{equation}
    where $\mathcal{F}_\kappa$ is defined in \eqref{def:fs}.
\end{theorem}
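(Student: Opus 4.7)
My plan is to start from the moment-matching identity \eqref{e.mommatch} with $\zeta = q^{-sN}$ and $M = \lfloor\alpha N\rfloor$, which rewrites the left-hand side of \eqref{e.multldp} as a product of two factors:
\begin{equation*}
\Big[\prod_{j=0}^\infty\frac{1}{1+q^{j-sN}}\Big] \cdot \mathbb{E}_{\mathfrak{M}(a;N,M-1)}\Big[\prod_{i=1}^N(1+q^{\ell_i-sN})\Big],
\end{equation*}
where I have performed the change of variables $\ell_i = \lambda_i+N-i$, noting that $\lambda_{N-j}+j = \ell_{N-j}$ under reindexing. I would analyze the two factors separately. For the prefactor, splitting the sum $\sum_{j\ge 0}\log(1+q^{j-sN})$ at $j = \lfloor sN\rfloor$ and using $\log(1+q^{j-sN}) = (j-sN)\log q + \log(1+q^{sN-j})$ for $j \le sN$ produces the sharp asymptotic $-\log\prod_{j\ge 0}(1+q^{j-sN})^{-1} = (\log q^{-1})s^2N^2/2 + O(N)$, contributing $(\log q^{-1})s^2/2$ to the final rate.

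The substantive work concerns the expectation. Using the Vandermonde/factorial formula for $s_\lambda(1^K)$, the $z$-measure in the $\ell$-coordinates becomes a discrete log-gas (Meixner ensemble): its weight is proportional to $a^{\sum_i\ell_i}\prod_{i<j}(\ell_i-\ell_j)^2 \prod_i[\ell_i!(\ell_i+M-N)!]^{-1}$. After Stirling's expansion of the factorials and the rescaling $\ell_i = Nx_i$, the empirical measure $\mu_N = N^{-1}\sum_i\delta_{\ell_i/N}$ satisfies the classical log-gas large deviation principle at speed $N^2$, with rate function of the form $I_{V_0}(\mu)-\inf I_{V_0}$ for a fully explicit external field $V_0$ obtained by reading off the coefficients of $x$, $x\log x$, and $(x+\alpha-1)\log(x+\alpha-1)$ in the Stirling expansion. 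The minimization is taken over $\mathcal{A}_\infty$, where the constraint $\phi \le 1$ is forced by the distinctness of the integers $\ell_i$, and the constraint $\phi \ge 0$ by the non-negativity of the weights.

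Since $\log(1+q^{\ell_i-sN}) = N(\log q^{-1})(s-\ell_i/N)_+ + o(N)$ uniformly over compact sets, the multiplicative statistic equals $\exp\{N^2(\log q^{-1})\int(s-x)_+ d\mu_N + o(N^2)\}$, and Varadhan's lemma (after verifying exponential tightness for the support of $\mu_N$) yields
\begin{equation*}
\lim_{N\to\infty}\frac{1}{N^2}\log \mathbb{E}\Big[\prod_i(1+q^{\ell_i-sN})\Big] = -\inf_{\phi\in\mathcal{A}_\infty}\Big\{I_{V_0}(\mu_\phi) - (\log q^{-1})\int(s-x)_+\phi(x)\,dx\Big\} + \kappa_\alpha,
\end{equation*}
where $\kappa_\alpha$ collects the $O(N^2)$ normalization contributions from $(1-a)^{N(M-1)}$, from $a^{|\lambda|}=a^{\sum\ell_i-N(N-1)/2}$, and from the residual Stirling corrections. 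The final step is to recognize the variational problem: using $(s-x)_+ = s - \min(x,s)$ and $\int d\mu_\phi = 1$, the linear term $(\log q^{-1})\min(x,s)$ is absorbed into the external field, turning $V_0$ into precisely the potential $\mathpzc{V}_s$ of \eqref{def.vs}; the infimum therefore equals $F_\alpha(s) - (\log q^{-1})s$. Combining the prefactor contribution $(\log q^{-1})s^2/2$ with $-(\log q^{-1})s$ and with $F_\alpha(s)$, and matching $\kappa_\alpha$ against $-\alpha\log(1-a) + (\log a)/2 + C_\alpha$, recovers the formula \eqref{def:fs} for $\mathcal{F}_\alpha(s)$ when $\alpha \ge 1$.

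For $\alpha < 1$ one has $M-1 < N$, so $\lambda \in \mathbb{Y}^{M-1}$ and the first $N-M+1$ entries of the ordered sequence $\ell$ are frozen at the consecutive integers $\ell_i = i-1$. The nontrivial particles form a Meixner ensemble of size $M-1 \sim \alpha N$, and rescaling by $\alpha^{-1}$ produces the $\alpha^2 F_{\alpha^{-1}}$ and $\alpha^2 C_{\alpha^{-1}}$ factors in \eqref{def:fs2}, while the explicit contribution of the frozen block to the multiplicative statistic, computed by the same splitting argument used for the prefactor, yields the additional term $-(\log q^{-1})\int_\alpha^1(y+x-1)_+dx$. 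The main technical obstacles will be (i) establishing the log-gas LDP with the density constraint $\phi\le 1$ and the exponential tightness required by Varadhan's lemma, especially in view of the logarithmic singularity of $V_0$ at $x = 1-\alpha$ in the case $\alpha<1$; and (ii) the careful bookkeeping of the $O(N^2)$ constants through Stirling's expansion, the $(1-a)^{N(M-1)}$ normalization, and the Barnes-$G$-type product $\prod_{i<j}(j-i)^{-2}$ to reproduce the explicit expression \eqref{def.ca} for $C_\alpha$.
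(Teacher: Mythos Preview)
Your proposal is correct and follows essentially the same route as the paper: both start from the moment-matching identity with $\zeta=q^{-sN}$, rewrite the Schur-measure expectation as a discrete log-gas in the shifted variables $\ell_i=\lambda_i+N-i$, and extract the rate via a Varadhan-type argument that produces the constrained energy minimization $\inf_{\phi\in\mathcal{A}_\infty}I_{\mathpzc{V}_s}(\mu_\phi)$.

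The differences are in packaging and in the level at which the analysis is carried out. You analyze the prefactor $\prod_{j\ge0}(1+q^{j-sN})^{-1}$ separately and then invoke a log-gas LDP plus Varadhan's lemma for the expectation; the paper instead absorbs everything (prefactor, normalization, multiplicative statistic) into a single expression $\exp(-N^2[R_N+I_{V_N}(\mu_N(\ell))])$ with explicit $R_N,V_N$, and then proves the upper and lower bounds by hand using three technical estimates imported from \cite{das2022large} (their Lemma~\ref{l.dd22}): a discrete-to-continuous comparison $I_{V_N}(\mu_N(\ell))\approx I_{V_N}(\widetilde\mu_N(\ell))$, the approximation $I_{V_N}\approx I_{\mathpzc{V}_s}$, and the existence of a near-optimal discrete configuration $\ell_*$. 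Exponential tightness is handled by the elementary observation $|\lambda|\stackrel{d}{=}\sum g_{ij}$ with i.i.d.\ geometrics and Bernstein's inequality. Your approach is cleaner conceptually; the paper's is more self-contained and sidesteps the need to cite a discrete log-gas LDP with the $\phi\le1$ constraint as a black box (which, as you note in obstacle~(i), is not entirely classical). For $\alpha<1$ the paper simply says the argument is analogous, so your sketch of the frozen-block mechanism actually adds detail; note though that it is the \emph{last} $N-M+1$ coordinates $\ell_M,\dots,\ell_N$ (not the first) that are frozen at $N-M,\dots,1,0$.
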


Let us consider the case $\kappa\ge 1$. The $\kappa<1$ case can be treated analogously. For notational convenience we shall assume $\kappa N$ is an integer and work with $\kappa N+1$ instead of $\lfloor \kappa N \rfloor$. Before going into the proof of \Cref{p.multldp}, it is instructive to understand why the rate function has relation to potential theory (recall the notions and basic facts from \Cref{sec.rate}). It is known from the work of \cite{johansson2000shape} that the $z$-measures can be viewed as certain discrete log-gases and has remarkable connection to potential theory. To see this, let us introduce the shifted variables $\ell_j:=\lambda_j+N-j$ for $1\leq j\leq N$, the measure in \eqref{eq:schurmeasure} with $z=N, z'=\kappa N$ can be rewritten as 
\begin{equation}\label{zmeas}
\begin{aligned}
   (1-a)^{\kappa N^2}a^{-N(N-1)/2} \prod_{i=1}^{N}\left(a^{\ell_i}\prod_{j=N+1}^{\kappa N}\frac{\ell_i+j-N}{j-i}\right)\cdot \prod_{1\leq i<j\leq N}\frac{(\ell_i-\ell_j)^2}{(j-i)^2}.
    \end{aligned}
\end{equation}
The above measure can be realized as $\exp(-N^2 I_{\widetilde{V}_N}(\mu_N(\ell))$ for some explicit external field $\widetilde{V}_N$. Here $\mu_N$ is the empirical measure given by
\begin{align*}
    \mu_N(\ell):=N^{-1}\sum_{i=1}^{N} \delta(\ell_i/N), \qquad \ell_i:=\lambda_i+N-i,
\end{align*}
and $I_V(\cdot)$ is defined in \eqref{e.IVmu}.
Invoking potential theory results, one can then obtain large deviation estimates for the empirical measures. Based on this large deviation estimate, the analysis of the multiplicative functional of the Schur measure can essentially be done using a Varadhan's lemma type argument, which eventually leads to \Cref{p.multldp}.

Before going into the details, we recall few technical estimates from \cite{das2022large}. 
\begin{lemma}\label{l.dd22} Suppose $V_N: [0,\infty)\to \R$ be a sequence of continuously differentiable functions satisfying $|V_N'(x)|\le C_1(1+\min(x^{-1},\log N))$ for some $C_1>0$. Suppose further that there exists a function $\mathpzc{V}$ and $C_2>0$ such that
\begin{align} \label{e.vclose}
    0 \le \mathpzc{V}(x)-V_N(x) \le \frac{C_2}{N}+\frac{1}{N}\log\frac{x+\kappa-1}{x},
\end{align}
for some $\kappa \ge 1$. We have the following
    \begin{enumerate}[label=(\alph*),leftmargin=18pt]
    \item \label{dda} There exists $K_1$ depending on $C_1$ such that for each $\ell\in \mathbb{W}^N$, we have \begin{align}
        \label{aineq}
        |I_{V_N}(\mu_N(\ell))-I_{V_N}(\widetilde{\mu}_N(\ell))| \le K_1 \cdot N^{-1}\log N,
    \end{align} where $\widetilde{\mu}_N(\ell)$ is a measure on $(\R_+,\mathcal{B}(\R_+))$ with density
    \begin{align*}
        \psi_{N;\ell}(x):=\sum_{i=1}^N \ind_{[\ell_i/N,\ell_i/N+1/N]}.
    \end{align*}
        \item \label{ddb} Recall the set $\mathcal{A}_{\infty}$ and the notation $\mu_\phi$ from the discussion around \eqref{cala}. There exists $K_2$ depending on $C_2, \kappa$ such that for any $\phi\in \mathcal{A}_\infty$, we have $|I_{V_N}(\mu_\phi)- I_{\mathpzc{V}}(\mu_\phi)| \le K_2\cdot N^{-1}$.
        \item \label{ddc} Let $\phi_{\mathpzc{V}}$ be the equilibrium measure corresponding to potential $\mathpzc{V}$ (recall \eqref{l.ds97}). $\phi_{\mathpzc{V}}$ has a compact support. Let $b$ be the right end point of the support of $\phi_{\mathpzc{V}}$. Set $K_3:=\sup_{x\in [0,b]} \mathpzc{V}(x)$.
There exists $\ell_*\in \mathbb{W}^N$ and $K_4$ depending on $C_1$ and $b$ such that
        \begin{align*}
            I_{V_N}(\mu_N(\ell_*)) \le I_{\mathpzc{V}}(\mu_{\phi_{\mathpzc{V}}})+K_3N^{-1}+K_4 \cdot N^{-1}\log N,
        \end{align*}
    \end{enumerate}
\end{lemma}
The proof of the above lemma follows from minor modification of results in \cite{das2022large} and so we defer its proof to \Cref{sec.appb}.

\begin{proof}[Proof of \Cref{p.multldp}]  Fix $s>0$. 
    Taking $\zeta=q^{-sN}$ and $M=\kappa N+1$ in \eqref{e.mommatch} we get
    \begin{equation*}
        \mathbb{E}\left[\prod_{i\geq 0}\frac{1}{1+q^{\mathfrak{h}(M,N)+i-sN}}\right]=\left[\prod_{j=0}^\infty \frac{1}{1+q^{j-sN}}\right]\mathbb{E}_{\mathfrak{M}}\left[\prod_{j=1}^N(1+q^{\lambda_{N-j}+j-sN})\right],
    \end{equation*}
    where we use the abbreviation $\mathfrak{M}$ to mean $\mathfrak{M}(a;N,\kappa N)$. We shall proceed by demonstrating $\liminf -\frac1{N^2}\log$ and $\limsup -\frac1{ N^2}\log$ of the r.h.s.~of the above expression is lower bounded and upper bounded by $\mathcal{F}_\kappa(s)$ respectively. 
    
    Throughout the proof we shall use the standard  big $O$ notation and write $a_N=b_N+O(c_N)$ or $a_N\le b_N+O(c_N)$ to mean that $|a_N-b_N| \le C\cdot c_N$ and $a_N \le b_N+C\cdot c_N$ for some constant $C$ depending on $a,\kappa, q,s$, and  $\mathpzc{V}_s$ (defined in \eqref{def.vs}).

\medskip

\noindent\textbf{Lower Bound.}  Let us define 

\begin{align}
    \label{def:mrs}
    \mathsf{R}(s):=\logq\frac{s^2-2s}{2}-\kappa\log (1-a)+\frac{\log(a)}{2}+C_{\kappa},
\end{align} where $C_\kappa$ is defined in \eqref{def.ca} and $\logq:=\log q^{-1}$. Recall the equilibrium energy $F_\kappa(s)=F_{\mathpzc{V}_s}$ defined via \eqref{l.ds97} for the $\mathpzc{V}_s$ potentials defined in \eqref{def.vs}. The $\mathcal{F}_{\kappa}$ function defined in \eqref{def:fs} satisfies
\begin{align}
    \label{idenf}
    \mathcal{F}_\kappa(s)= \mathsf{R}(s)+I_{\mathpzc{V}_s}(\mu_{\phi_{\mathpzc{V}_s}}).
\end{align}
It is well known that $|\lambda|\stackrel{d}{=}\sum_{i=1}^N\sum_{j=1}^{\kappa N} g_{ij}$, where $g_{ij}$'s are i.i.d.~Geometric$(a)$ random variables, i.e., $\Pr(g_{ij}=k)=(1-a)a^k \mbox{ for } k\in \Z_{\ge 0}$, see for example \cite{Johansson_growth_matrices,johansson2000shape}. Thus, by the Chernoff's inequality for all $K>\Ex[|\lambda|]/N^2=\frac{a\kappa}{1-a}$  we have
    \begin{equation}\label{e.tail}
      \left[\prod_{j=0}^\infty \frac{1}{1+q^{j-sN}}\right]\mathbb{E}_{\mathfrak{M}}\left[\prod_{j=1}^N(1+q^{\lambda_{N-j}+j-sN})\mathbf{1}_{|\lambda|>KN^2}\right]  \le \mathbb{P}_{\mathfrak{M}}\left({|\lambda|>KN^2}\right) \le c^{-1}\exp(-cKN^2).
    \end{equation}
for some constant $c>0$ depending on $a$. 
We choose $K$ large enough so that $cK> \mathsf{R}(s)+F_\kappa(s)+1$ and $K>a\kappa/(1-a)$. Next we observe the following string of equalities (to be explained in a moment): 
\begin{align}
\label{e.smidens}
   &  \left[\prod_{j=0}^\infty \frac{1}{1+q^{j-sN}}\right]\mathbb{E}_{\mathfrak{M}}\left[\prod_{j=1}^N(1+q^{\lambda_{N-j}+j-sN})\mathbf{1}_{|\lambda| \le KN^2}\right]  
   \\ \label{e.smiden3} & = \sum_{\lambda : |\lambda|\le KN^2}(1-a)^{\kappa N^2}a^{-N(N-1)/2}\prod_{i=1}^{N}\left(a^{\ell_i}\prod_{j=N+1}^{\kappa N}\frac{\ell_i/N+j/N-1}{j/N-i/N}\right) \\ & \hspace{4cm}\cdot \prod_{1\leq i<j\leq N}\frac{(\ell_i/N-\ell_j/N)^2}{(i/N-j/N)^2}\prod_{j=1}^{N}(1+q^{\ell_{j}-sN})\prod_{j\ge 0}\frac{1}{1+q^{j-sN}}\\
   & \label{e.smidenm} =\sum_{\lambda : |\lambda|\le KN^2}
   \left(\frac{(1-a)^{\kappa N^2}}{ a^{\frac{N(N-1)}{2}}}\prod_{1\leq i<j\leq N}\frac{1}{(\frac{i}{N}-\frac{j}{N})^{2}}\prod_{i=0}^{N-1}\prod_{j=1}^{\kappa N-N}\frac{N}{j+i}\prod_{j\geq 0}\frac{1}{(1+q^{j-sN})}\right)\\ \label{e.smidenm0}
   &\hspace{2cm} \cdot \left(\prod_{i=1}^N a^{\ell_i}\prod_{1\leq i<j\leq N}\left(\frac{\ell_i}{N}-\frac{\ell_j}{N}\right)^2\prod_{i=1}^N\prod_{j=N+1}^{\kappa N} \left(\frac{\ell_i}{N}+\frac{j-N}{N}\right)\prod_{j=1}^N(1+q^{\ell_j-sN})\right)
   \\ & = \sum_{\lambda : |\lambda|\le KN^2} \exp\left(- N^2 [R_N(s)+I_{V_{s,N}}\big(\mu_N(\ell)\big)]\right), \label{e.smiden}
\end{align}
  where
\begin{equation}
\label{defrn}
\begin{aligned}
    R_N(s):=& -\kappa \log(1-a)+\frac{\log a}{2}\left(1-\frac{1}{N}\right) +\frac{2}{ N^2}\sum_{0\leq i<j\leq N-1}\log|(j-i)/N| \\
    &\hspace{1cm}+\frac{1}{ N^2}\sum_{j=1}^{\kappa N-N}\sum_{i=0}^{N-1} \log|(j+i)/N|+\frac{1}{ N^2}\sum_{j=0}^{\infty}\log(1+q^{j-sN})-\logq s,
\end{aligned}
\end{equation}
and 
\begin{equation*}
\begin{aligned}
    V_{s,N}(x):= &x\log\frac1a -\frac{1}{ N}\sum_{j=1}^{\kappa N-N} \log|x+(j/N)|-\frac{1}{ N}\log(1+q^{N(x-s)})+ \logq s.
\end{aligned}
\end{equation*}
The functional $I_{V_{s,N}}(\mu)$ is defined as in \eqref{e.IVmu} with $V=V_{s,N}$. 
The equality in \eqref{e.smiden3} follows using the explicit form of the $z$-measures from \eqref{zmeas}.
The equality in \eqref{e.smidenm} is a straightforward algebraic step, separating the $\ell$-independent and $\ell$-dependent terms into two brackets. The equality in \eqref{e.smiden} follows by noting that $e^{-N^2(R_N(s)+\logq s)}$ is precisely the term within the brackets in \eqref{e.smidenm} and $e^{-N^2(I_{V_{s,N}}(\mu_N(\ell))-\logq s)}$ is precisely the term within the brackets in \eqref{e.smidenm0}.

It is not hard to check that for each fixed $s$
\begin{align*}
    R_N(s) & = -\kappa \log(1-a)+\frac{\log a}{2} +\int_{0}^1 \int_0^1 \log|x-y|\diff x\diff y \\
    &\hspace{1cm}+\int_0^1\int_0^{\kappa -1} \log|x+y|\diff x\diff y+\logq\int_0^\infty (s-x)_+\diff x -(\logq)s +O(N^{-1}) \\ & = \mathsf{R}(s)+O(N^{-1}), 
\end{align*}
where $\mathsf{R}(s)$ is defined in \eqref{def:mrs}. Hereafter, we will drop the $s$ from the notations $\mathsf{R}(s)$, $R_N(s)$, $V_{s,N}$ and $\mathpzc{V}_s$ for convenience. 
Let us quickly verify that $V_N$ and $\mathpzc{V}$ from \eqref{def.vs} satisfy the assumptions in Lemma \ref{l.dd22}. Note that
\begin{align*}
   V_N'(x)=\log\frac1a -\frac{1}{ N}\sum_{j=1}^{\kappa N-N} \frac1{x+(j/N)}+\logq\frac{q^{N(x-s)}}{1+q^{N(x-s)}}.   
\end{align*}
It follows from the above explicit expression that $|V_N'(x)|$ is uniformly bounded above by $C(1+\min(x^{-1},\log N))$. On the other hand, for $y\in [(j-1)/N,j/N]$ we have
\begin{align*}
  0\le  \log \bigg(x+\frac{j}N\bigg) -\log (x+y)= \log \left[1+\frac{\frac{j}N-y}{x+y}\right] \le \frac{\frac{j}N-y}{x+y} \le \frac{1}{N(x+y)}.
\end{align*}
Thus,
\begin{align*}
  0 \le \frac1N\sum_{j=1}^{\kappa N-N}\log (x+\frac{j}{N}) - \int_0^{\kappa -1} \log (x+y)\diff y \le \frac{1}{N} \int_0^{\kappa-1}\frac1{x+y}\diff y = \frac1N\log\frac{x+\kappa-1}{x},
\end{align*}
which in turn implies \eqref{e.vclose}. Thus $V_N$ and $\mathpzc{V}$ satisfy the assumptions of Lemma \ref{l.dd22}. Thus, for each $\ell\in \mathbb{W}^N$, by Lemma \ref{l.dd22}\ref{dda} and \ref{ddb} we have 
 \begin{align*}
  I_{V_N}(\mu_N(\ell)) & \ge I_{V_N}(\widetilde{\mu}_N(\ell))+ O(N^{-1}\log N) \\ & \ge  I_{\mathpzc{V}}(\widetilde{\mu}_N(\ell))+O(N^{-1}\log N) \ge I_{\mathpzc{V}}(\mu_{{\phi}_\mathpzc{V}})+O(N^{-1}\log N),
\end{align*}
where in the last line we used the fact the $\mu_{\phi_{\mathpzc{V}}}$ minimizes $I_{\mathpzc{V}}(\cdot)$. This along with $R_N=\mathsf{R}+O(N^{-1})$ implies
\begin{align*}
 \eqref{e.smiden}  \le \exp\big(-N^2[\mathsf{R}+I_{\mathpzc{V}}(\mu_{{\phi}_\mathpzc{V}})]+O(N\log N)\big)\sum_{n=1}^{KN^2} \mathsf{p}_n,
\end{align*}
where $\mathsf{p_n}$ is the number of partitions with $|\lambda|=n$. Using the well known fact (see \cite[Eq.~(1.15)]{romik_2015} for example) that $\mathsf{p}_n \le e^{C\sqrt{n}}$ for some absolute constant $C>0$, we see that the sum on the r.h.s.~of the above equation is at most $e^{\widetilde{C}N}$ for some constant $\widetilde{C}>0$ depending only on $K$. Combining this with the tail estimate from \eqref{e.tail}, in view of \eqref{idenf}, we arrive at the lower bound.

\medskip

\noindent\textbf{Upper Bound.}  By the exact same computation as in \eqref{e.smidens}-\eqref{e.smiden} we have 
\begin{equation}\label{e.smiden2}
    \begin{aligned}
    \left[\prod_{j=0}^\infty \frac{1}{1+q^{j-sN}}\right]\mathbb{E}_{\mathfrak{M}}\left[\prod_{j=1}^N(1+q^{\lambda_{N-j}+j-sN})\right] 
   & = \sum_{\ell\in \mathbb{W}^N} \exp\left(-N^2\cdot [R_N+I_{V_N}\big(\mu_N(\ell)\big)]\right) \\ & \ge \exp\left(-N^2\cdot [R_N+I_{V_N}\big(\mu_N(\ell_*)\big)]\right),
\end{aligned}
\end{equation}
where $\ell_*\in \mathbb{W}^N$ is the one coming from Lemma \ref{l.dd22}\ref{ddc}. By Lemma \ref{l.dd22}\ref{ddc},  we have
\begin{align*}
    N^2 I_{V_N}(\mu_N(\ell_*)) &  \le N^2 I_{\mathpzc{V}}(\mu_{\phi_{\mathpzc{V}}}) +O(N\log N).
\end{align*}
Plugging this inequality back in \eqref{e.smiden2} and using the fact that $R_N=\mathsf{R}+O(N^{-1})$ we see that 
\begin{equation*}
    \begin{aligned}
\mathbb{E}_{\mathfrak{M}}\left[\prod_{j\geq 0}\frac{1+q^{\lambda_{N-j}+j-sN}}{1+q^{j-sN}}\right]  
   & \ge  \exp\big(- N^2[\mathsf{R}+I_{{\mathpzc{V}}}(\mu_{\phi_{\mathpzc{V}}})]+O(N\log N)\big).
\end{aligned}
\end{equation*}
Taking $\limsup_{N\to \infty} -\frac1{ N^2}\log$ both sides, in view of \eqref{idenf}, we get the desired upper bound. 
\end{proof}

An easy consequence of the above theorem is the following:

\begin{lemma}\label{l.fha} $\mathcal{F}_\kappa$ is a non-negative non-decreasing function. $\mathcal{F}_\kappa(s)=0$ for all $s<\lln$.
\end{lemma}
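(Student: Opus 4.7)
The plan is to deduce all three claims directly from the identity
\begin{equation*}
    \mathcal{F}_\alpha(s) = -\lim_{N\to \infty} \frac{1}{N^2}\log \mathbb{E}\left[\prod_{i=0}^\infty\frac{1}{1+ q^{\mathfrak{h}(\lfloor\alpha N\rfloor,N)+i-sN}}\right]
\end{equation*}
established in \Cref{p.multldp}, together with very elementary observations about the integrand. I will not need any further potential-theoretic input for this lemma.

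\textbf{Non-negativity.} Since $q\in(0,1)$, every factor $\tfrac{1}{1+q^{\mathfrak{h}+i-sN}}$ lies in $(0,1]$, so the infinite product is bounded above by $1$. Consequently the expectation is at most $1$, hence $-\frac{1}{N^2}\log(\cdot)\ge 0$. Passing to the limit yields $\mathcal{F}_\alpha(s)\ge 0$.

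\textbf{Monotonicity.} For $s_1<s_2$ and any fixed realization of $\mathfrak{h}(\lfloor\alpha N\rfloor,N)$, each factor satisfies $\tfrac{1}{1+q^{\mathfrak{h}+i-s_1N}}\ge \tfrac{1}{1+q^{\mathfrak{h}+i-s_2N}}$ because $q^{-sN}$ is increasing in $s$ (recall $q<1$). Multiplying over $i$ and taking expectations shows the expectation is non-increasing in $s$, so $-\frac{1}{N^2}\log$ of it is non-decreasing, and the same is true in the limit.

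\textbf{Vanishing below the limit shape.} The key input is the law of large numbers $\mathfrak{h}(\lfloor\alpha N\rfloor, N)/N\to \mu_\alpha$ in probability (from \cite{BCG6V}, upgraded to a.s.\ in \cite{drillick2023strong}). Fix $s<\mu_\alpha$ and choose $\delta>0$ with $s+\delta<\mu_\alpha$. On the event $E_N=\{\mathfrak{h}(\lfloor\alpha N\rfloor,N)\ge (s+\delta)N\}$ we have
\begin{equation*}
    \prod_{i=0}^\infty\frac{1}{1+q^{\mathfrak{h}+i-sN}}\;\ge\; \prod_{i=0}^\infty\frac{1}{1+q^{\delta N+i}}\;\xrightarrow[N\to\infty]{}\;1,
\end{equation*}
where the limit follows from the fact that the tail of $\sum_{i\ge 0}\log(1+q^{\delta N+i})$ is $O(q^{\delta N})$. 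Combining with $\Pr(E_N)\to 1$, and the a.e.~upper bound $1$ for the product, one concludes that the expectation converges to $1$. Therefore $-\frac{1}{N^2}\log$ of it converges to $0$, giving $\mathcal{F}_\alpha(s)=0$.

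The argument is entirely straightforward; the only minor point requiring attention is handling the infinite product in the lower bound above, which one does by noting that convergence is uniform once $\delta N$ is large enough. No step here is a substantive obstacle.
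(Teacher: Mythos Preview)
Your proof is correct and follows essentially the same approach as the paper: all three claims are read off directly from the limiting identity of \Cref{p.multldp}, with non-negativity and monotonicity being immediate from the factors lying in $(0,1]$ and being monotone in $s$, and the vanishing below $\mu_\alpha$ coming from the law of large numbers. The only cosmetic difference is that for $s<\mu_\alpha$ the paper bounds the product below on $\{\mathfrak{h}\ge sN\}$ by the fixed constant $\prod_{i\ge 0}(1+q^i)^{-1}$ rather than your $N$-dependent bound tending to $1$; either suffices since constants disappear under $-\tfrac{1}{N^2}\log$.
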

\begin{proof} From the relation \eqref{e.multldp} $\mathcal{F}_\kappa$, it is clear that $\mathcal{F}_\kappa$ is non-negative and non-decreasing. Fix any $s<\lln$. Note that
\begin{align*}
    \mathbb{E} \left[\prod_{i= 0}\frac{1}{1+ q^{\mathfrak{h}(\lfloor \kappa N \rfloor,N)+i-sN}}\right] \ge \Pr(\mathfrak{h}(\lfloor \kappa N \rfloor,N) \ge s N) \prod_{i=0}^{\infty} \frac{1}{1+q^i}.
\end{align*}  
The product is a constant and $\Pr(\mathfrak{h}(\lfloor \kappa N \rfloor,N) \ge s N) \to 1$ as $s<\lln$ and $\lln$ is the law of large numbers. Thus, taking $\lim_{N\to\infty} -\frac1{N^2}\log$, in view of \Cref{p.multldp}, we see that $\mathcal{F}_\kappa(s)=0$.
\end{proof}

\subsection{Parabolic behavior of $\mathcal{F}_\kappa(s)$} \label{sec.2.2} In this subsection we determine an explicit value of $\mathcal{F}_\kappa(s)$ for large $s$. The key ideas essentially comes from potential theory arguments.

\begin{proposition} \label{p.energystab} For large enough $s$ we have 
\begin{align}\label{f.para}
    \mathcal{F}_\kappa(s)=\logq \frac{(s-1)^2}{2}+\kappa \log \frac{1-aq}{1-a}.
\end{align}
\end{proposition}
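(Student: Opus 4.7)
The plan is to show that the equilibrium energy $F_\alpha(y)$ stabilizes at a constant value $F_\infty$ for $y$ sufficiently large, and then to identify $F_\infty$ to match \eqref{f.para}.

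\textbf{Stabilization.} The potential $\mathpzc{V}_y$ depends on $y$ only through $\logq\min(x,y)$, so it coincides with the $y$-independent potential
\[
\mathpzc{V}_\infty(x) := \log((aq)^{-1})x - (\alpha-1)[\log(x+\alpha-1)-1] + x\log\tfrac{x}{x+\alpha-1}
\]
on $[0,y]$, while $\mathpzc{V}_y(x) = \mathpzc{V}_\infty(x) - \logq(x-y)$ for $x > y$. Applying \eqref{l.ds97} to $\mathpzc{V}_\infty$ yields a unique equilibrium measure $\mu^*$ with compact support $[c,d]\subset \mathbb{R}_+$, equilibrium constant $\ell_\infty$, and energy $F_\infty$. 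I would show that $\mu^*$ remains the equilibrium measure for $\mathpzc{V}_y$ whenever $y$ is large enough, hence $F_\alpha(y) = F_\infty$. By the Euler--Lagrange characterization of the equilibrium measure, writing $U^*(x) := \int -\log|x-z|\,d\mu^*(z)$, the only nontrivial condition to check is
\[
h(x) := U^*(x) + \mathpzc{V}_\infty(x) - \ell_\infty \ge \logq(x-y), \qquad x > y.
\]
A direct computation gives $\mathpzc{V}_\infty'(x) = \log((aq)^{-1}) + \log\frac{x}{x+\alpha-1}$, which converges to $\log((aq)^{-1}) = \logq + \log(a^{-1}) > \logq$ as $x\to\infty$, while $U^{*\prime}(x) = -\int\frac{d\mu^*(z)}{x-z} \to 0$. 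Therefore $h'(x) > \logq$ past some threshold $y_0 \ge d$; combined with $h(y) \ge 0$ for $y \ge d$ (by the off-support condition for $\mathpzc{V}_\infty$), integration yields the required inequality for all $y \ge y_0$.

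\textbf{Identifying $F_\infty$.} Substituting $F_\alpha(y) = F_\infty$ into \eqref{def:fs} and matching with \eqref{f.para} reduces the claim to
\[
F_\infty = \alpha\log(1-aq) + \tfrac{1}{2}\log((aq)^{-1}) - C_\alpha.
\]
I would verify this probabilistically from \eqref{e.mommatch}, noting that the substitution $\min(x,y) \mapsto x$ effectively replaces $a$ by $aq$ in the Meixner-type external field, so $F_\infty$ is the equilibrium energy of this shifted ensemble. For $s$ large, $(1+q^{\mathfrak{h}+i-sN})^{-1}\sim q^{sN-\mathfrak{h}-i}$ whenever $sN-\mathfrak{h}-i > 0$, giving
\[
\prod_{i\ge 0}(1+q^{\mathfrak{h}+i-sN})^{-1} \sim q^{(sN-\mathfrak{h})(sN-\mathfrak{h}+1)/2}/(-q;q)_\infty.
\]
Decomposing the expectation over possible values of $\mathfrak{h}$, the dominant contribution for $s$ large comes from $\mathfrak{h} = N$ (the maximal value). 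Under step initial condition, $\{\mathfrak{h}(\lfloor\alpha N\rfloor, N) = N\}$ is the event that no path turns upward in the first $\lfloor\alpha N\rfloor$ columns; every vertex in the corresponding rectangle sees an incoming horizontal arrow and no incoming vertical arrow, so the stochastic weights factorize and each vertex contributes $\frac{1-a}{1-aq}$ (the probability of a $(1,0)\to(1,0)$ transition from \Cref{fig:vertex weights}), producing
\[
\Pr\big(\mathfrak{h}(\lfloor\alpha N\rfloor, N) = N\big) = \Big(\tfrac{1-a}{1-aq}\Big)^{N\lfloor\alpha N\rfloor}.
\]
Taking $-N^{-2}\log$ of \eqref{e.mommatch} gives $\mathcal{F}_\alpha(s) = \frac{\logq(s-1)^2}{2} + \alpha\log\frac{1-aq}{1-a}$ for $s$ large, completing the proof.

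\textbf{Main obstacle.} The most delicate step is the off-support verification in the stabilization argument, which requires quantitative tail estimates on $U^{*\prime}$ and a comparison of asymptotic slopes between $\mathpzc{V}_\infty$ and the perturbation $-\logq(x-y)$. A secondary issue is making the Laplace-type dominant-term argument in the identification step rigorous, ensuring that contributions from $\mathfrak{h} < N$ are genuinely subdominant in the limit; one route is to combine the probabilistic lower bound with the convex structure established elsewhere in the paper. The case $\alpha \in (0,1)$ reduces to $\alpha > 1$ via the dual formulation \eqref{def:fs2} involving $F_{\alpha^{-1}}$.
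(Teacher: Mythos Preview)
Your stabilization argument is correct and is in fact cleaner than the paper's route. The paper proves a uniform compact-support lemma (Lemma~\ref{l.ds972}) for the whole family $\{\mathpzc{V}_s\}_{s\ge 2}$ and then observes that $\mathpzc{V}_s|_{[0,R_0]}=\mathpzc{V}_\infty|_{[0,R_0]}$ once $s\ge R_0$. You instead fix the equilibrium measure $\mu^*$ of $\mathpzc{V}_\infty$ and directly check the single remaining Euler--Lagrange inequality for $\mathpzc{V}_y$ on $\{x>y\}$ via the slope comparison $\mathpzc{V}_\infty'(\infty)=\log((aq)^{-1})>\logq$. Both work; yours avoids the auxiliary lemma. (Minor point: with the paper's kernel $k_V$ the relevant function is $U^*+\tfrac12\mathpzc{V}_\infty$, not $U^*+\mathpzc{V}_\infty$, but the slope argument goes through unchanged.)

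The identification step, however, has a real gap. Restricting the expectation to $\{\mathfrak{h}=N\}$ gives only the inequality $\mathcal{F}_\alpha(s)\le g(s-1)+\alpha\log\tfrac{1-aq}{1-a}$. For the matching lower bound you must show the contribution from $\mathfrak{h}=N-j$, $j\ge 1$, is subdominant, and this requires a bound of the type $\Pr(\mathfrak{h}\ge N-j)/\Pr(\mathfrak{h}=N)\le e^{C jN}$, i.e.\ a finite left-derivative of $\Phi_{\alpha,N}$ at $1$. Weak log-concavity (Theorem~\ref{t.main0}) does not give this: convexity bounds $\Phi_{\alpha,N}$ from \emph{above} near $1$ by the chord, not from below. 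Worse, the paper's only route to controlling $\Phi_{\alpha,N}$ near $1$ is Proposition~\ref{prop:equicont}, whose proof \emph{uses} Proposition~\ref{p.energystab}; so invoking ``convex structure established elsewhere'' risks circularity.

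The paper sidesteps this entirely by computing $F_\alpha(\infty)$ directly from potential theory: it writes down the explicit density $\phi_{\mathpzc{V}_\infty}$, verifies the variational conditions of Lemma~\ref{l.ds973}, and evaluates the energy integral (Appendix). A viable alternative in the spirit of your approach---which you did not propose---is to work on the \emph{Schur} side of \eqref{e.mommatch}: for $s$ larger than the right edge of the Meixner equilibrium measure one has $\prod_j(1+q^{\ell_j-sN})= q^{|\lambda|+N(N-1)/2-sN^2}\cdot e^{O(N)}$ on an event of probability $1-e^{-cN^2}$, and $\mathbb{E}_{\mathfrak{M}}[q^{|\lambda|}]=\big(\tfrac{1-a}{1-aq}\big)^{\alpha N^2}$ by the Cauchy identity. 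This yields both inequalities at once without any input on $\Phi_{\alpha,N}$.
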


Recall the expression of $\mathcal{F}_\kappa(s)$ from \eqref{def:fs} and \eqref{def:fs2}. There are two key steps in proving \Cref{p.energystab}:
\begin{enumerate}[label=(\Alph*),leftmargin=18pt]
    \item \label{1ststep} We first show that the equilibrium energy $F_\kappa(s)$ becomes $F_\kappa(\infty)$ for large enough $s$ (\Cref{l.energystab}). We achieve this by controlling the support of $\phi_{\mathpzc{V}_s}$ uniformly as $s\to \infty$ (\Cref{l.ds972}). 
    \item \label{2ndstep} We then obtain explicit expression for $\phi_{\mathcal{V}_\infty}$ using the variational characterization of the equilibrium measure. This allows us to compute $F_\kappa(\infty)$ explicitly.
\end{enumerate}
     
Let us now work out the details of the above two steps. We begin with a uniform compact support lemma for a class of well-behaved potentials.

\begin{lemma} \label{l.ds972} Let $\mathcal{C}$ be a collection of potentials satisfying the following.  Suppose $\sup_{V\in \mathcal{C}, x\in [0,2]} V(x) <\infty,$ and for every $K>0$, there exists $R>0$ such that for all $V\in \mathcal{C}$ we have
\begin{align}\label{1cond}
    & \log \left[|x-y|e^{-\frac12V(x)-\frac12V(y)}\right]^{-1} > K \mbox{ for all } x,y\ge R, \\
    & \log \left[|x-y|e^{-\frac12V(x)}\right]^{-1} > K \mbox{ for all } x\ge R, y\in [0,R]. \label{2cond}
\end{align}
Then there exists $R_0$ such that the support of $\phi_V$ lies in $[0,R_0]$ for all $V\in \mathcal{C}$.
\end{lemma}
\begin{proof} The proof essentially follows the proof of the claim in the proof of Theorem 2.1 in \cite{ds97},  We write it here for completeness. Take any $\mu \in \mathcal{A}_\infty$ with density $\phi$. Decompose $\mu:=\mu_1+\mu_2$ where $\mu_1=\mu|_{[0,R]}$ and $\mu_2=\mu|_{[R,\infty)}$. Set $\delta=\mu_2(\mathbb{R}_+)$. Our goal is to show that for all $R$ large enough, there exists $\eta \in \mathcal{A}_{\infty}$ which is supported on $[0,R]$ and satisfies $I_V(\eta)\le I_V(\mu)$ for all $V\in \mathcal{C}$. Let $\nu\in \mathcal{A}_\infty$ whose density is proportional to $(1-\phi)(x)\ind_{x\in [0,2]}$. Note that the proportionality constant $\int_0^2 (1-\phi(x))dx=2-\int_0^2 \phi(x)dx \ge 1.$ Thus, for every $x\ge 0$ 
\begin{align}\label{nuine}
   \mbox{the density of $\nu$ at $x$ is at most $1-\phi(x) \le 1$.}
\end{align}
Let $\eta:=\mu_1+\delta \nu \in \mathcal{A}_\infty$. Note that the $\eta$'s support is contained in $[0,R]$. For two measures $\nu_1,\nu_2$, define
\begin{align}\nonumber
    \langle \nu_1,\nu_2\rangle & :=\int\int \log \left[|x-y|e^{-\frac12V(x)-\frac12V(y)}\right]^{-1} \diff \nu_1(x)\diff \nu_2(y) \\ & = \int\int \log \left[|x-y|e^{-\frac12V(y)}\right]^{-1} \diff \nu_1(x)\diff \nu_2(y)+\frac12\nu_2(\R_{+})\int V(x)\diff \nu_1(x). \label{2exp}
\end{align}
Let $A:=\sup_{V\in \mathcal{C},x\in [0,2]} V(x)$.
Let us note that
\begin{align}\nonumber
    I_V(\eta)-I_V(\mu) & =\langle \mu_1 +\delta\nu, \mu_1+\delta\nu\rangle-\langle \mu_1 +\mu_2, \mu_1+\mu_2\rangle \\ & 
    =\delta^2\langle \nu,\nu\rangle+2\delta\langle \mu_1 ,\nu\rangle -  \langle \mu_2 ,\mu_2\rangle-2\langle \mu_1 ,\mu_2\rangle \label{3exp}
\end{align}
Using \eqref{1cond}, we obtain $\langle \mu_2 ,\mu_2\rangle \ge \delta^2 K$. 
As $\nu(\mathbb{R}_+)=1$ and $\mu_2(\mathbb{R}_+)=\delta$, using the second expression for $\langle \cdot,\cdot \rangle$ from \eqref{2exp}, we obtain
\begin{align*}
    & 2\delta\langle \mu_1 ,\nu\rangle -2\langle \mu_1 ,\mu_2\rangle \\ & \hspace{1cm}= 2\delta \int\int \log \frac{1}{|x-y|e^{-\frac12V(y)}}\diff \mu_1(x)\diff\nu(y) - 2\int\int \log \frac{1}{|x-y|e^{-\frac12V(y)}}\diff \mu_1(x)\diff\mu_2(y) \\ & \hspace{1cm} \le 2\delta \int\int \log \frac{1}{|x-y|e^{-\frac12V(y)}}\diff \mu_1(x)\diff\nu(y) - 2\int\int K\diff \mu_1(x)\diff\mu_2(y)
\end{align*}
where the last inequality follows from \eqref{2cond}. As $\mu_1(\R_+)=1-\delta$ and $\mu_2(\R_+)=\delta$, plugging these estimates back in \eqref{3exp} yields
\begin{align}\label{4exp}
    I_V(\eta)-I_V(\mu)  \le \delta^2\langle \nu,\nu\rangle+2\delta\int\int \log \frac{1}{|x-y|e^{-\frac12V(y)}}\diff \mu_1(x)\diff\nu(y)-\delta^2K-2\delta(1-\delta)K.
\end{align}
We will now bound $\langle \nu,\nu\rangle$ and the double integral above. Let us note that
\begin{align*}
\langle \nu,\nu\rangle & = \int_0^2\int_0^2 \log \left[|x-y|e^{-\frac12V(x)-\frac12V(y)}\right]^{-1}\diff \nu(x) \diff \nu(y) \\ & \le \int_0^2\int_0^2 \left[A-\log 2+\log\frac2{|x-y|}\right]\diff \nu(x) \diff \nu(y) \\ & =A-\log 2+ \int_0^2\int_0^2 \log\frac2{|x-y|}\diff \nu(x) \diff \nu(y)  \le  A-\log 2+\int_0^2\int_0^2 \log\frac{2}{|x-y|}\diff x \diff y =: C_1.
\end{align*}
Let us briefly explain the above inequalities. As $\nu$ is supported on $[0,2]$, using the fact that $V(x)\le A$ for $x\in [0,2]$, we derive the first inequality. We also added and subtracted $\log 2$ in the second line so that $\log \frac2{|x-y|}$ remains non-negative on the range of the double integral. The second inequality is a consequence of \eqref{nuine}. A similar trick leads to
\begin{align*}
  \int\int \log \frac{1}{|x-y|e^{-\frac12V(y)}}\diff \mu_1(x)\diff\nu(y) &   \le \frac12A+\int_0^R \int_0^2 \log\frac{1}{|x-y|}\diff\mu_1(x)\diff \nu(y) \\ & \le \frac12A+\int_0^3 \int_0^2 \left[\log\frac{1}{|x-y|}\right]\diff \mu_1(x)\diff \nu(y) \\ & \le \frac12A+\int_0^3 \int_0^2 \left[\log\frac{3}{|x-y|}\right]\diff \mu_1(x)\diff \nu(y) \\ & \le \frac12A+\int_0^3 \int_0^2 \left[\log\frac{3}{|x-y|}\right]\diff x\diff y  =:C_2. 
\end{align*}
Let us briefly explain the above lines. The first inequality follows by using the fact that $V(x)\le A$ for $x\in [0,2]$. The second inequality follows by noting that the log term is negative when $x \ge 3$ and $y\in [0,2]$. In third line, we have just increased $1$ to $3$, so that $\log \frac{3}{|x-y|}$ remains non-negative on the range of integral. Finally, the last inequality is due to the fact that the densities corresponding to $\mu_1$ and $\nu$ are less than $1$ due to the definition of $\mathcal{A}_{\infty}$ and \eqref{nuine} respectively.

Choosing $K>3\max(C_1,C_2)$, the above bounds and \eqref{4exp} ensure $I_V(\eta)\le I_V(\mu)$. Thus the support of the minimizers all lie in $[0,R_0]$ for some $R_0$ large enough. 
\end{proof}

Using the above lemma we can now complete Step \ref{1ststep}:
\begin{lemma} \label{l.energystab} For large enough $s$ we have 
    $F_\kappa(s)=F_\kappa(\infty):=F_{\mathpzc{V}_\infty}$.
\end{lemma}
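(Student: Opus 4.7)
\textbf{Proof plan for Lemma \ref{l.energystab}.} The strategy is the two-step outline sketched as Step \ref{1ststep} in the text: first, apply Lemma \ref{l.ds972} to obtain a compact support $[0,R_0]$ for $\phi_{\mathpzc{V}_s}$ that is \emph{uniform} in $s$ (and that also contains the support of $\phi_{\mathpzc{V}_\infty}$); then use the fact that $\mathpzc{V}_s$ and $\mathpzc{V}_\infty$ agree on $[0,s]$ to identify the two equilibrium problems for $s > R_0$.

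First, I would fix some $s_0 > 0$ and verify that the family $\mathcal{C}:=\{\mathpzc{V}_s : s \ge s_0\} \cup \{\mathpzc{V}_\infty\}$ satisfies the hypotheses of Lemma \ref{l.ds972}. From \eqref{def.vs} one has the pointwise identity
\begin{equation*}
    \mathpzc{V}_s(x) = \mathpzc{V}_\infty(x) - (\log q^{-1})(x-s)_+,
\end{equation*}
so $\mathpzc{V}_s \le \mathpzc{V}_\infty$ everywhere, which immediately gives $\sup_{V\in \mathcal{C}}\sup_{x\in[0,2]} V(x) \le \sup_{x \in [0,2]} \mathpzc{V}_\infty(x) < \infty$. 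In the other direction, the monotonicity $s \mapsto \min(x,s)$ gives $\mathpzc{V}_s \ge \mathpzc{V}_{s_0}$ pointwise for all $s \ge s_0$, and a direct computation using $x\log\tfrac{x}{x+\alpha-1} = -(\alpha-1) + O(1/x)$ shows $\mathpzc{V}_{s_0}(x)$ grows linearly in $x$ (with a positive slope controlled from below). This reduces the uniform growth hypotheses of Lemma \ref{l.ds972} to a single estimate for $\mathpzc{V}_{s_0}$ and $\mathpzc{V}_\infty$, both of which are routine. Lemma \ref{l.ds972} then yields an $R_0 > 0$ (independent of $s$) such that $\mathrm{Supp}(\phi_{\mathpzc{V}_s}) \subset [0,R_0]$ for every $s \ge s_0$ and $\mathrm{Supp}(\phi_{\mathpzc{V}_\infty}) \subset [0,R_0]$.

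Next, I would fix $s > \max(s_0,R_0)$ and observe that $\mathpzc{V}_s(x) = \mathpzc{V}_\infty(x)$ for all $x \in [0,s] \supset [0,R_0]$. Consequently, for any probability measure $\mu$ supported in $[0,R_0]$ one has $I_{\mathpzc{V}_s}(\mu) = I_{\mathpzc{V}_\infty}(\mu)$. Applying this to $\mu = \phi_{\mathpzc{V}_\infty}$ (admissible since $\phi_{\mathpzc{V}_\infty}$ is supported in $[0,R_0]$) and using the minimality of $F_\alpha(s)$ gives
\begin{equation*}
    F_\alpha(s) \;\le\; I_{\mathpzc{V}_s}(\phi_{\mathpzc{V}_\infty}) \;=\; I_{\mathpzc{V}_\infty}(\phi_{\mathpzc{V}_\infty}) \;=\; F_\alpha(\infty).
\end{equation*}
Applying it instead to $\mu = \phi_{\mathpzc{V}_s}$ and using minimality of $F_\alpha(\infty)$ yields the reverse inequality, so $F_\alpha(s) = F_\alpha(\infty)$.

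The main technical obstacle is the uniform-support step: since the potentials $\mathpzc{V}_s$ differ from one another outside $[0,s]$, it is not \emph{a priori} clear that their equilibrium measures can be confined to one common compact set. This is precisely the point where the monotonicity $\mathpzc{V}_s \ge \mathpzc{V}_{s_0}$ (for $s \ge s_0$) is essential: it converts the uniform growth requirement of Lemma \ref{l.ds972} into a growth check on the single potential $\mathpzc{V}_{s_0}$, which can be handled explicitly.
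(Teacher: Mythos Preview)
Your proposal is correct and follows essentially the same approach as the paper's proof: both use the monotonicity of $s\mapsto \mathpzc{V}_s$ to reduce the uniform growth hypotheses of Lemma~\ref{l.ds972} to a single potential, obtain a uniform $R_0$ for the supports, and then observe that $\mathpzc{V}_s|_{[0,R_0]}=\mathpzc{V}_\infty|_{[0,R_0]}$ once $s\ge R_0$. Your write-up is a bit more explicit (spelling out both inequalities in the last step and the identity $\mathpzc{V}_s=\mathpzc{V}_\infty-(\log q^{-1})(x-s)_+$), but the argument is the same.
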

\begin{proof} Let us write $I_{s}(\cdot)$ for $I_{\mathpzc{V}_s}(\cdot)$, $\mu_s$ for $\mu_{\phi_{\mathpzc{V}_s}}$, and $\mathpzc{V}_s$ for $\mathpzc{V}$ to stress the dependence on $s$.  Note that for all large enough $x$, $\mathpzc{V}_{s}(x) \ge \beta x$ uniformly over all $s\in [0,\infty]$ for some $\beta>0$. Then one can check that $(\mathpzc{V}_s)_{s\ge 2}$ satisfies the hypothesis of Lemma \ref{l.ds972}. Hence there exists $R_0$, such that for all $s\in [0,\infty]$, the support of $\phi_{\mathpzc{V}_s}$ lies in $[0,R_0]$. However for $s\ge R_0$, $\mathpzc{V}_s|_{[0,R_0]}=\mathpzc{V}_{\infty}|_{[0,R_0]}$. Thus, $I_s(\mu_s)=I_{\infty}(\mu_{\infty})=F_\kappa({\infty})$ for $s\ge R_0$. 
\end{proof}

\begin{proof}[Proof of \Cref{p.energystab}] 
Let us now carry out Step \ref{2ndstep} described just after the proposition, i.e., computing $\phi_{\mathpzc{V}_\infty}$ and $F_\kappa(\infty)$. There is a standard route in potential theory that gives us the equilibrium measure and the corresponding equilibrium energy. For completeness, we write the key steps here and postpone the tedious calculations to the \Cref{sec.app}. The following lemma is the key to obtaining exact expression for $\phi_{\mathpzc{V}_\infty}$.

\begin{lemma}[Theorem 2.1(d) in \cite{ds97}] \label{l.ds973} Suppose $V :[0,\infty)\to \R$ is continuous potential satisfying $V(x)\ge 2\log(1+x^2)$. Assume that there exists $\phi\in \mathcal{A}_\infty$ such that
\begin{enumerate}[label=(\alph*),leftmargin=18pt]
    \item $\displaystyle \int_0^\infty k_V(x,y)\phi(x)\diff x \ge \lambda$ if $\phi(y)=0$,
     \item $\displaystyle \int_0^\infty k_V(x,y)\phi(x)\diff x \le \lambda$ if $\phi(y)=1$,
      \item \label{cd} $\displaystyle \int_0^\infty k_V(x,y)\phi(x)\diff x = \lambda$ if $\phi(y) \in (0,1)$,
\end{enumerate}
for some $\lambda\in \R$. Then $\phi=\phi_V$.
\end{lemma}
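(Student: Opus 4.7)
The plan is to argue that conditions (a)--(c) are exactly the Euler--Lagrange conditions for the constrained minimization problem defining $\phi_V$ via \eqref{l.ds97}: any $\phi\in\mathcal{A}_\infty$ satisfying them minimizes $I_V$ over $\mathcal{A}_\infty$, and uniqueness of the minimizer (from strict convexity) then forces $\phi=\phi_V$. This is the standard route in logarithmic potential theory and splits into a convexity step and a first-variation step.

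I will first establish strict convexity of the functional $\phi\mapsto I_V(\mu_\phi)$ on the convex set $\mathcal{A}_\infty$. For distinct $\phi_0,\phi_1\in\mathcal{A}_\infty$ and $t\in[0,1]$, set $\phi_t=(1-t)\phi_0+t\phi_1$ and $\nu=\mu_{\phi_1}-\mu_{\phi_0}$; a bilinear expansion of $I_V$ and cancellation of the $V$-terms (using $\nu(\mathbb{R}_+)=0$) give
\begin{align*}
    (1-t)I_V(\mu_{\phi_0})+t\,I_V(\mu_{\phi_1})-I_V(\mu_{\phi_t})=t(1-t)\iint\big(-\log|x-y|\big)\diff\nu(x)\diff\nu(y).
\end{align*}
The classical positive-definiteness of the logarithmic kernel on zero-mass signed measures of finite energy --- provable through the Fourier representation of $-\log|x-y|$ --- then shows that the right-hand side is strictly positive when $\nu\neq 0$, so $I_V$ is strictly convex and its minimizer over $\mathcal{A}_\infty$, if it exists, is unique.

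Next I will verify that $\phi$ itself is a minimizer. Given any $\tilde\phi\in\mathcal{A}_\infty$, set $\phi_t=(1-t)\phi+t\tilde\phi\in\mathcal{A}_\infty$; by convexity of $t\mapsto I_V(\mu_{\phi_t})$ it suffices to check its right-derivative at $t=0$ is non-negative. Differentiating under the integral yields
\begin{align*}
    \left.\tfrac{d}{dt}\right|_{t=0^+}I_V(\mu_{\phi_t})=2\int_0^\infty\big(U^\phi(y)-\lambda\big)\big(\tilde\phi(y)-\phi(y)\big)\diff y,
\end{align*}
where $U^\phi(y):=\int_0^\infty k_V(x,y)\phi(x)\diff x$, and the shift by $\lambda$ is allowed because $\int_0^\infty(\tilde\phi-\phi)\diff y=0$. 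The hypotheses (a)--(c) make the integrand pointwise non-negative: on $\{\phi=0\}$ both factors are $\ge 0$ by (a) and $\tilde\phi\ge 0$; on $\{\phi=1\}$ both are $\le 0$ by (b) and $\tilde\phi\le 1$; on $\{0<\phi<1\}$ the first factor vanishes by (c). Hence the derivative is non-negative and $I_V(\mu_{\tilde\phi})\ge I_V(\mu_\phi)$. Together with the uniqueness from the first step this gives $\phi=\phi_V$.

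The main technical obstacle is justifying the differentiation under the integral and the Fubini exchanges in the presence of the logarithmic singularity of $k_V$ on the diagonal and the possibly non-compact support of $\phi$. These are standard issues in logarithmic potential theory, handled by a double truncation: restrict to $y\in[0,R]$ and replace $-\log|x-y|$ by $-\log(|x-y|\vee\varepsilon)$, use the uniform bounds $\phi,\tilde\phi\le 1$ together with the growth hypothesis $V(x)\ge 2\log(1+x^2)$ to control tail contributions to both $I_V$ and $U^\phi$, and then pass to the limit $R\to\infty$, $\varepsilon\to 0$ by monotone/dominated convergence.
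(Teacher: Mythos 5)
The paper cites this lemma as Theorem 2.1(d) of Dragnev--Saff \cite{ds97} without reproducing its proof, so there is no internal proof to compare against; your argument is the standard variational one used in the cited source and in logarithmic potential theory more generally (strict convexity of $I_V$ via positive-definiteness of the logarithmic kernel on zero-mass signed measures, together with the Frostman-type first-variation inequality derived from conditions (a)--(c)), and it is correct. The only point worth flagging is that strict positivity of $E(\nu,\nu)$ for $\nu=\mu_{\phi_1}-\mu_{\phi_0}\neq 0$ requires $\nu$ to have finite logarithmic energy; this is guaranteed here because you only invoke uniqueness between two minimizers, which are forced to have compact support and finite energy by \eqref{l.ds97} and the growth of $V$.
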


Given the above lemma, the usual strategy to obtain the equilibrium measure is to first make an educated guess for $\phi_V$ then verify that it indeed satisfies the conditions of \Cref{l.ds973}. For nice enough $V$s, the ansatz can be obtained by differentiating the relation in \Cref{l.ds973}\ref{cd} (c.f.~\cite[Section 6]{johansson2000shape}) or by the so called Nekrasov's equation (c.f.~\cite[Section 2]{borodin2017gaussian}). Fortunately, $\mathpzc{V}_\infty$ is nice enough to apply either of the aforementioned two techniques. We skip the ansatz calculations (as it is not rigorous anyway) and report here the expression for the measure.

\medskip

Write $p=aq$, $c=\frac{(1-\sqrt{p\kappa})^2}{1-p}$, and $d=\frac{(\sqrt{p\kappa}+1)^2}{1-p}$.  
When $\kappa>1/p$ define 
\begin{align}\label{equi.nosat}
    \phi(x):=\begin{dcases}
        \frac{1}{2}-\frac{1}{\pi}\tan^{-1}\left(\frac{xp+x+p\kappa-1}{\sqrt{4xp(x+\kappa-1)-(xp+x+p\kappa-1)^2}}\right), & x \in  \left[c,d\right], \\
        0 & x \notin [c,d].
    \end{dcases}
\end{align}
When $\kappa\le 1/p$ define 
\begin{align}\label{equi.sat}
    \phi(x):=\begin{dcases}
        1 & x\in \left[0,c\right], \\
        \frac{1}{2}-\frac{1}{\pi}\tan^{-1}\left(\frac{xp+x+p\kappa-1}{\sqrt{4xp(x+\kappa-1)-(xp+x+p\kappa-1)^2}}\right), & x \in  \left[c,d\right], \\
        0 & x \ge d.
    \end{dcases}
\end{align}

\begin{figure}[h!]
    \centering
    \begin{overpic}[width=7cm]{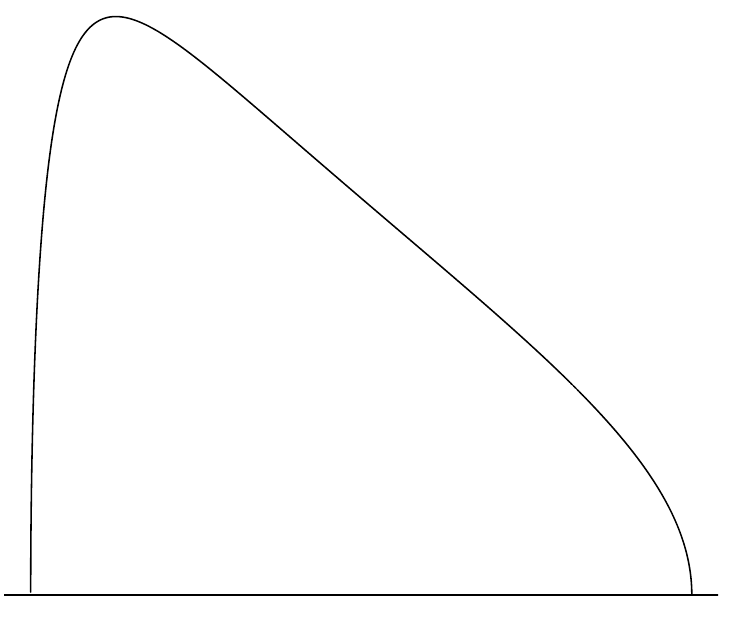}
        \put(3.35,2.5){$\mid$}
        \put(3.3,-2){$c$}
        \put(93.45,2.5){$\mid$}
        \put(93.3,-2){$d$}
    \end{overpic}
    \hspace{1cm}
    \begin{overpic}[width=7cm]{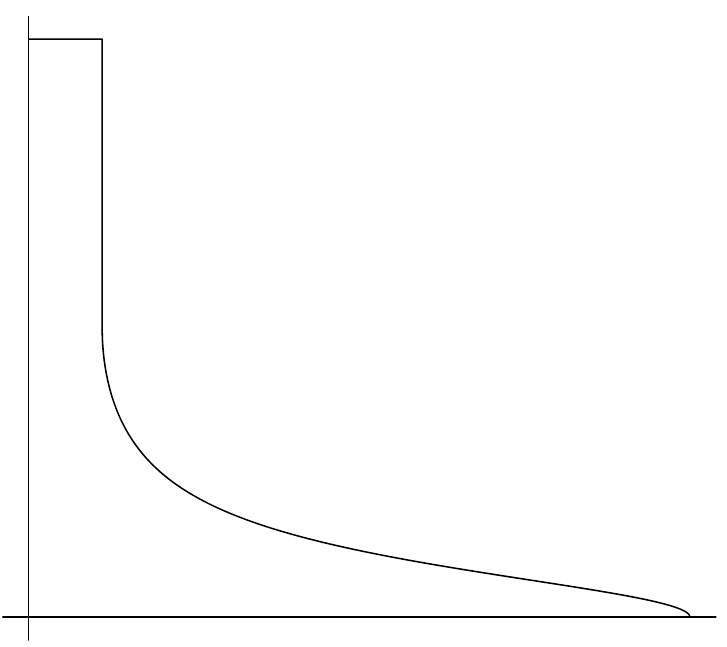}
        \put(13.35,2.5){$\mid$}
        \put(13.3,-2){$c$}
        \put(94.25,2.5){$\mid$}
        \put(94.2,-2){$d$}
    \end{overpic}
    \caption{$\phi=\phi_{\mathpzc{V}_\infty}$ when $\kappa>1/p$ (left) and $\kappa<1/p$ (right).}
    \label{fig:phi}
\end{figure}

A plot for $\phi$ is given in \Cref{fig:phi}. Recall $k_V$ from \eqref{e.IVmu}. We claim that when $\kappa > 1/p$
\begin{align}\label{intb}
    \int_c^d k_{\mathpzc{V}_\infty}(x,y)\phi(x)\diff x  = \begin{dcases}
        \zeta_-(y) & y\in [0,c] \\
        \kappa \log(1-p)-\frac12\log p-C_\kappa & y\in [c,d] \\
       \zeta_+(y)  & y\in [d,\infty)
    \end{dcases}
\end{align} 
where $\zeta_-$ is decreasing on $[0,c]$ and $\zeta_+$ is increasing on $[d,\infty)$. One has exact expressions for these functions; but they are not important in our analysis. Here the constant $C_\kappa$ comes from \eqref{def.ca}. When $\kappa \le 1/p$
\begin{equation}
\begin{aligned}\label{inta}
    &\int_0^d k_{\mathpzc{V}_\infty}(x,y)\phi(x)\diff x  \\
    &= \begin{dcases} \overline\zeta_-(y)
         & y\in [0,c] \\
    -\frac{\kappa\log \kappa+\log p}{2}+\frac{\kappa+1}{2}(\log(1-p)+1)+\frac{1}{2}\int_0^d \mathpzc{V}_\infty(x)\phi(x)\diff x    & y\in [c,d] \\
      \overline\zeta_+(y)   & y\in [d,\infty)
    \end{dcases}
\end{aligned}
\end{equation}
where $\overline\zeta_-(y), \overline\zeta_+(y)$ are both increasing functions. Again, exact expressions of these functions are not important.  Furthermore, for both the case $\kappa>1/p$ and $\kappa \leq 1/p$
we have
\begin{align}\label{intc}
    F_\kappa(\infty):= \int_0^d \int_0^d k_{\mathpzc{V}_\infty}(x,y)\phi(x)\phi(y)\diff x \diff y=\kappa \log (1-p)-\frac12\log p-C_\kappa.
\end{align}

 The verification of \eqref{intb}, \eqref{inta}, and \eqref{intc} is purely computational and relies on few integration tricks (see \cite[Section 6]{das2022large} for similar computations involving Jack measures with different  specializations).  We postpone this to \Cref{sec.app}. Note that, in view of \Cref{l.ds973}, \eqref{intb} and \eqref{inta} implies that $\phi=\phi_{\mathpzc{V}_\infty}$. Recall that $p=aq$. In view of \Cref{l.energystab} and the definition of $\mathcal{F}_\kappa(s)$ from \eqref{def:fs}, when $\kappa \ge 1$, we have
\begin{align*}
    \mathcal{F}_\kappa(s) & =\logq \frac{s^2-2s}{2}-\kappa\log(1-a)+\frac{\log(a)}{2}+\kappa \log(1-aq)-\frac{\log (aq)}{2} \\ & =\logq \frac{(s-1)^2}2+\kappa\log\frac{1-aq}{1-a},
\end{align*}
for large enough $s$. When $\kappa\in (0,1)$, using \eqref{def:fs2}, for large enough $s$ we have
\begin{align*}
    \mathcal{F}_\kappa(s) & =\logq\frac{s^2-2s\kappa+2\kappa-2\kappa^2}{2}-\logq\int_{\kappa}^1 (s+x-1)\diff x \\ & \hspace{2cm}-\kappa\log (1-a)+\kappa^2\frac{\log(a)}{2}+\kappa\log(1-p)-\frac{\kappa^2}2\log (aq) \\ & =\logq \frac{(s-1)^2}2+\kappa\log\frac{1-aq}{1-a}.
\end{align*}
We thus arrive at \eqref{f.para}.
\end{proof}

\section{Proof of LDP results} \label{sec.4}

In this section, we combine the results from the previous two sections to prove \Cref{thm.main,thm.main2}. Towards this end, let us first define the prelimiting rate function. For $s \le 1$ define
\begin{align}\label{def:tta}
\Phi_{\kappa,N}(s):=-\frac1{N^2}\log\Pr(\h(\lfloor\kappa N\rfloor,N) \ge sN).
\end{align}
For $s>1$, we set $\Phi_{\kappa,N}(s):=\infty$. We record some easy properties of $\Phi_{\kappa,N}$ in the following lemma.
\begin{lemma}\label{l.tprop} $\Phi_{\kappa,N}$ is non-decreasing on $(-\infty,1]$ and $\Phi_{\kappa,N}(0)=0$. We have $\lim_{N\to\infty}\Phi_{\kappa,N}(s)=0$ for all $s<\lln$ and $\Phi_{\kappa,N}(1)=\kappa \log \frac{1-aq}{1-a}$.
\end{lemma}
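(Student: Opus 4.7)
The first three claims unpack quickly from the definitions. Since $s\mapsto\{\h(\lfloor\alpha N\rfloor, N)\ge sN\}$ is decreasing in $s$ under inclusion, the function $\Phi_{\alpha,N}$ is non-decreasing on $(-\infty,1]$. At $s=0$, $\Pr(\h\ge 0)=1$ because $\h\ge 0$ almost surely under the step initial condition, so $\Phi_{\alpha,N}(0)=0$. For $s<\lln$, I would invoke the law of large numbers from \cite{BCG6V} (recalled in \Cref{subs:model and main result}): convergence in probability $\h(\lfloor\alpha N\rfloor,N)/N\to\lln$ implies $\Pr(\h\ge sN)\to 1$, and consequently $\Phi_{\alpha,N}(s)\to 0$.

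For the last assertion, my plan is to compute $\Pr(\h(M,N)\ge N)$ exactly by direct inspection of the S6V Markov dynamics. Since $\h(M,N)\le N$ for the step initial condition, $\{\h\ge N\}=\{\h=N\}$, and the event $\{\h(M,N)=N\}$ is equivalent to the absence of an upward outgoing arrow at every vertex $(i,N)$ with $i\in\{1,\ldots,M-1\}$: each such arrow would correspond to one of the first $N$ paths leaving row $N$ strictly before column $M$, and so to a decrement of the height. I would then propagate this constraint inductively downward. The key mechanism is that a vertex with both a left-incoming and a below-incoming arrow is forced into the cross configuration (weight $1$), which always emits an upward arrow; hence forbidding up-out in the row-$N$ slice forces the absence of below-incoming arrows there, which in turn requires no up-out on row $N-1$ between columns $1$ and $M-1$, and so on down to row $1$. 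Combined with the step initial condition (left-incoming always present at the vertices of column $1$, no below-incoming along row $1$), this induction forces every vertex in the rectangle $\{1,\ldots,M-1\}\times\{1,\ldots,N\}$ into the ``left-in, right-out'' configuration, whose weight $\tfrac{1-a}{1-aq}$ can be read off from \Cref{fig:vertex weights}. Multiplying over the $(M-1)N$ vertices in the block gives
\begin{equation*}
    \Pr(\h(M,N)\ge N) = \bigg(\frac{1-a}{1-aq}\bigg)^{(M-1)N}.
\end{equation*}

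Taking $-\tfrac{1}{N^2}\log$ and substituting $M=\lfloor\alpha N\rfloor$ yields $\Phi_{\alpha,N}(1)=\tfrac{M-1}{N}\log\tfrac{1-aq}{1-a}=\alpha\log\tfrac{1-aq}{1-a}$, the final equality absorbing an $O(1/N)$ discrepancy from the floor function, which is negligible at the LDP speed $N^2$. The only non-routine part of the argument is the inductive identification of $\{\h(M,N)=N\}$ with the rectangular deterministic block configuration; the remainder is straightforward bookkeeping of definitions and a direct appeal to the LLN of \cite{BCG6V}.
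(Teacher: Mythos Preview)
Your argument follows the same approach as the paper's proof: the first three claims are immediate from the definition and the law of large numbers, and for $\Phi_{\alpha,N}(1)$ you compute $\Pr(\h(M,N)=N)$ by identifying this event with the configuration in which all paths travel horizontally through the block. The paper simply asserts this identification (``which happens when all the paths travel horizontally''), whereas you supply the inductive argument explicitly.

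One small correction: the event $\{\h(M,N)=N\}$ is equivalent to no up-out arrow at vertices $(i,N)$ for $i\in\{1,\ldots,M\}$, not $\{1,\ldots,M-1\}$. In the face picture, $\h(M,N)$ is the height at face $(M+\tfrac12,N+\tfrac12)$, and reaching it from $(\tfrac12,N+\tfrac12)$ crosses the vertical edges above columns $1$ through $M$; an up-out at column $M$ already decrements the height. Consequently the forced block is $\{1,\ldots,M\}\times\{1,\ldots,N\}$ with $MN$ vertices, giving $\Pr(\h(M,N)=N)=\big(\tfrac{1-a}{1-aq}\big)^{MN}$. This matches the paper's $\big(\tfrac{1-a}{1-aq}\big)^{\alpha N^2}$ under the convention $M=\alpha N$; your exponent $(M-1)N$ is off by one column. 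As you note, this is an $O(1/N)$ discrepancy and is irrelevant at speed $N^2$, so the conclusion stands.
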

\begin{proof} From the definition, it is clear that $\Phi_{\kappa,N}$ is non-decreasing on $(-\infty,1]$ and $\Phi_{\kappa,N}(0)=0$. For $s<\lln$, we have $\Pr(\h(\lfloor\kappa N\rfloor,N) \ge sN) \to 1$ as $\lln$ is the law of large numbers. Thus,  $\lim_{N\to\infty}\Phi_{\kappa,N}(s)=0$ for $s<\lln$. Finally, note that $\h(\lfloor\kappa N\rfloor,N)\ge N$ implies $\h(\lfloor\kappa N\rfloor,N)=N$, which happens when all the paths travel horizontally. Thus, 
    \begin{align*}
        \Pr(\h(\lfloor\kappa N\rfloor,N) \ge N)=\Pr(\h(\lfloor\kappa N\rfloor,N) =N)= \left(\frac{1-a}{1-aq}\right)^{\kappa N^2}.
    \end{align*}
    This leads to the desired value of $\Phi_{\kappa,N}(1)$.
\end{proof}

A quick consequence of \Cref{t.main0} is the following weak midpoint convexity for $\Phi_{\kappa,N}$. 
\begin{proposition}
\label{cor:tconvex} For any $\e>0$, there exists $N_{\e} \in \Z_{\ge 1}$ such that for all $v_1,v_2\in \R$ and for all  $N\ge N_\e$ we have
\begin{align*}
\frac12\big(\Phi_{\kappa,N}(v_1)+\Phi_{\kappa,N}(v_2)\big)+\e \ge \Phi_{\kappa,N}\bigg(\frac{v_1+v_2}{2}-N^{-1/6}\bigg).
\end{align*}
\end{proposition}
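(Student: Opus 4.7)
The plan is to apply \Cref{t.main0} directly with $M=\lfloor \alpha N\rfloor$ and $r_i = v_i N$ (reducing to the stated form via monotonicity), take logarithms, and check that the two ``non-optimal'' exponents in \eqref{eq:weaklcasepbdintro} contribute only $o(1)$ after division by $N^2$.

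First I would reduce to the case $v_1,v_2\in[0,1]$. If either $v_i>1$ then $\Phi_{\alpha,N}(v_i)=+\infty$ and the inequality is trivial, while if $v_i<0$ then $\Phi_{\alpha,N}(v_i)=0$ and by \Cref{l.tprop} we can replace $v_i$ by $0$ without changing the left-hand side, while the right-hand side can only decrease by monotonicity of $\Phi_{\alpha,N}$. So assume $v_1,v_2\in[0,1]$, set $r_i = v_i N\ge 0$, and $M=\lfloor\alpha N\rfloor$. Applying \Cref{t.main0}, using $\{\h(M,N)\ge r\}=\{\h(M,N)\ge \lceil r\rceil\}$, we obtain
\begin{equation}\label{eq:plan1}
    \Pr(\h(M,N)\ge v_1 N)\Pr(\h(M,N)\ge v_2 N) \le N^2\mathsf{C}^{(MN)^{7/8}}\Pr\!\left(\h(M,N)\ge \tfrac{v_1+v_2}{2}N-4(MN)^{2/5}\right)^2.
\end{equation}
Taking $-\frac{1}{N^2}\log$ of both sides yields
\begin{equation}\label{eq:plan2}
    \Phi_{\alpha,N}(v_1)+\Phi_{\alpha,N}(v_2) + \frac{2\log N}{N^2}+\frac{(MN)^{7/8}\log \mathsf{C}}{N^2}\ge 2\,\Phi_{\alpha,N}\!\left(\tfrac{v_1+v_2}{2}-\tfrac{4(MN)^{2/5}}{N}\right).
\end{equation}

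Next I would estimate the two error terms. Since $MN\le (\alpha+1)N^2$ for all $N\ge 1$, we have
\begin{equation*}
    \frac{(MN)^{7/8}\log \mathsf{C}}{N^2}\le \frac{(\alpha+1)^{7/8}\log \mathsf{C}}{N^{1/4}}\xrightarrow[N\to\infty]{}0, \qquad \frac{2\log N}{N^2}\xrightarrow[N\to\infty]{}0,
\end{equation*}
and simultaneously
\begin{equation*}
    \frac{4(MN)^{2/5}}{N}\le \frac{4(\alpha+1)^{2/5}}{N^{1/5}}=o(N^{-1/6}),
\end{equation*}
so in particular $\tfrac{4(MN)^{2/5}}{N}\le N^{-1/6}$ for $N$ large. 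Hence for every $\e>0$ there exists $N_\e$ such that for all $N\ge N_\e$ the additive error in \eqref{eq:plan2} is bounded by $2\e$, and the shift inside $\Phi_{\alpha,N}$ on the right-hand side satisfies $\tfrac{v_1+v_2}{2}-\tfrac{4(MN)^{2/5}}{N}\ge \tfrac{v_1+v_2}{2}-N^{-1/6}$.

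Finally I would conclude using the monotonicity of $\Phi_{\alpha,N}$ on $(-\infty,1]$ (\Cref{l.tprop}): since the argument on the right side of \eqref{eq:plan2} exceeds $\tfrac{v_1+v_2}{2}-N^{-1/6}$, we get
\begin{equation*}
    \Phi_{\alpha,N}\!\left(\tfrac{v_1+v_2}{2}-\tfrac{4(MN)^{2/5}}{N}\right)\ge \Phi_{\alpha,N}\!\left(\tfrac{v_1+v_2}{2}-N^{-1/6}\right),
\end{equation*}
unless $\tfrac{v_1+v_2}{2}-\tfrac{4(MN)^{2/5}}{N}>1$, in which case the right-hand side of \eqref{eq:plan2} is $+\infty$ and \eqref{eq:plan1} gives $0\ge +\infty$, so this case cannot occur (after absorbing the error) except when $\Phi_{\alpha,N}(v_i)=+\infty$ and the claim is trivial. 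Dividing \eqref{eq:plan2} by $2$ and rebalancing the $\e$ gives the desired inequality. The whole argument is essentially bookkeeping; the only genuine input is \Cref{t.main0}, and the mild exponent $N^{-1/6}$ is chosen precisely so that it dominates the $N^{-1/5}$ shift produced by \eqref{eq:weaklcasepbdintro}.
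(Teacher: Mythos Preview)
Your proof is correct and follows essentially the same approach as the paper: reduce to $v_1,v_2\ge 0$ using $\Phi_{\alpha,N}(v)=0$ for $v\le 0$ and monotonicity, apply \Cref{t.main0} with $M=\lfloor\alpha N\rfloor$ and $r_i=v_iN$, take $-\tfrac{1}{N^2}\log$, and absorb the $O(N^{-1/4})$ additive error and $O(N^{-1/5})$ shift into $\e$ and $N^{-1/6}$ respectively. Two cosmetic remarks: your phrase ``the right-hand side can only decrease'' is backwards in wording (replacing $v_i<0$ by $0$ \emph{increases} the RHS, which is precisely why the reduced inequality implies the original), and the final edge case $\tfrac{v_1+v_2}{2}-\tfrac{4(MN)^{2/5}}{N}>1$ cannot occur once $v_1,v_2\in[0,1]$, so that paragraph is unnecessary.
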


\begin{proof} Assume $v_1<v_2$. If $v_2\le 0$ there is nothing to prove. So, assume $v_2>0$. If $v_1\ge0$, using \Cref{t.main0} with $r_1=v_1N$ and $r_2=v_2N$ and $M=\kappa N$, we have
\begin{align}\label{eref}
\frac12\big(\Phi_{\kappa,N}(v_1)+\Phi_{\kappa,N}(v_2)\big)+\frac{C}{N^{2/5}} \ge \Phi_{\kappa,N}\bigg(\frac{v_1+v_2}{2}-4\kappa^{2/5}N^{-1/5}\bigg).
\end{align}
If $v_1<0$, using the fact that $\Phi_{\kappa,N}(v_1)=\Phi_{\kappa,N}(0)=0$  we have
\begin{equation}\label{eref2}
    \begin{aligned} 
    &\frac12\big(\Phi_{\kappa,N}(v_1)+\Phi_{\kappa,N}(v_2)\big)+\frac{C}{N^{2/5}} \\
    & = \frac12\big(\Phi_{\kappa,N}(0)+\Phi_{\kappa,N}(v_2)\big)+\frac{C}{N^{2/5}} \\ &  \ge \Phi_{\kappa,N}\bigg(\frac{v_2}{2}-4\kappa^{2/5}N^{-1/5}\bigg) \ge \Phi_{\kappa,N}\bigg(\frac{v_1+v_2}{2}-4\kappa^{2/5}N^{-1/5}\bigg).
\end{aligned}
\end{equation}
Here in \eqref{eref} and \eqref{eref2}, the constant $C$ depends only on $a$, $q$ and $\kappa$. The penultimate inequality follows by applying \eqref{eref} with $v_1=0$. Assuming $N$ large enough, we can ensure $4\kappa^{2/5}N^{-1/5} \le N^{-1/6}$. This proves the proposition.
\end{proof}

We next relate $\Phi_{\kappa,N}$ to the auxillary rate function $\mathcal{F}_\kappa$ that we obtain in \Cref{sec: LDP_schur}. Let us first see why the $q$-Laplace transform appearing in \eqref{e.multldp} is indeed a tail probability of a shifted height function. {This relationship is essentially due to a matching from \cite{IMS_matching}}. Towards this end, we define two independent random variables $S$ supported on $\mathbb{Z}$ and $\chi$ supported on $\Z_{\ge 0}$ by the following probability mass functions
\begin{equation}
\label{chis}
    \begin{aligned}
    & \Pr(S =k) \propto q^{k(k-1)/2}, \qquad  k\in \mathbb{Z} \\ & \Pr(\chi=k)=\prod_{j=k+1}^{\infty} (1-q^j)-\prod_{j=k}^{\infty} (1-q^j)\qquad k\in \Z_{\ge 0}.
\end{aligned}
\end{equation}
For reference, $S$ is said to follow $\operatorname{Theta}(1/\sqrt{q};q)$ distribution and $\chi$ is said to follow $q$-$\operatorname{Geo}(q)$ distribution. We will only require the expression for the cumulative distribution function of $\chi+S$ which we quote from Lemma 2.4 in \cite{IMS_matching}: 
\begin{equation}\label{chisd}
    \mathbb{P}(\chi + S \le n) =  \prod_{i =0}^\infty \frac{1}{1+q^{n+i}}.
\end{equation}
In view of this, we have that $q$-Laplace transform of the height function of S6V is nothing but the tail probability of the height function shifted by $\chi+S$:
\begin{align*}
    \mathbb{E} \left[\prod_{i= 0}^\infty\frac{1}{1+ q^{\mathfrak{h}(\lfloor\kappa N\rfloor,N)+i-sN}}\right] = \Pr\big(\mathfrak{h}(\lfloor\kappa N\rfloor,N)-\chi-S \ge sN\big)
\end{align*}
where $\chi,S$ are drawn independently of the S6V dynamics and has the distributions described in \eqref{chis}. Note that $\chi$ is a non-negative random variable and $S$ is a discrete Gaussian-type random variable. Indeed it is straightforward to check that for any $v\in \R$ 
\begin{align*}
   e^{-N^2g(v)-CN} \le \Pr(S=vN) \le e^{-N^2g(v)+CN} 
\end{align*}
 for some constant $C>0$ depending on $q$, where $g(v)=\frac{v^2}{2}\logq$ (recall $\logq=\log q^{-1}$). Since \Cref{p.multldp} gives us that $\mathcal{F}_\kappa$ is the lower-tail rate function of $\chi+S-\h(\lfloor \kappa N\rfloor,N)$, it is natural to expect the following relation between $\mathcal{F}_\kappa$ and $\Phi_{\kappa,N}$.
\begin{proposition} \label{prop:f_limit_g+T}
				Let $x \in \mathbb{R}$ and define the function $g(x)=\frac{x^2}{2}\logq$. Then  we have
				\begin{equation}
                \label{eq:lim_inf_conv}
					\mathcal{F}_\kappa(s) = \lim_{N\to \infty}  \left( g \oplus \Phi_{\kappa,N} \right)(s):=\lim_{N\to \infty}\inf_{y \in \mathbb{R}} \left\{ g(y) + \Phi_{\kappa,N}(s-y) \right\}.
				\end{equation}
			\end{proposition}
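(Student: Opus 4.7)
The plan is to combine Theorem~\ref{p.multldp} with a Laplace-type analysis of the discrete convolution obtained by conditioning on the random shift:
\begin{equation*}
    \Pr\big(\mathfrak{h}-\chi-S \ge sN\big)=\sum_{m\in\mathbb{Z}}\Pr(\chi+S=m)\,\Pr\big(\mathfrak{h}\ge sN+m\big),
\end{equation*}
where $\mathfrak{h}=\mathfrak{h}(\lfloor\alpha N\rfloor,N)$. Since the left-hand side coincides with the $q$-Laplace transform in \eqref{e.multldp}, Theorem~\ref{p.multldp} reduces the proposition to showing
\begin{equation*}
    -\tfrac{1}{N^2}\log\Pr\big(\mathfrak{h}-\chi-S\ge sN\big) = (g\oplus \Phi_{\alpha,N})(s)+o(1),\qquad N\to\infty.
\end{equation*}

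The starting ingredient is a sharp asymptotic for $\Pr(\chi+S=m)$, extracted by differencing \eqref{chisd}: for $k\ge 0$,
\begin{equation*}
    \Pr(\chi+S=-k) = C(q)\,q^{k(k+1)/2}\prod_{j=1}^{k+1}(1+q^j)^{-1},\qquad C(q):=\prod_{j\ge 0}(1+q^j)^{-1},
\end{equation*}
so that $-\tfrac{1}{N^2}\log \Pr(\chi+S=-yN)=g(y)+O(N^{-1})$ uniformly on compact subsets of $[0,\infty)$. The key structural observation is that, writing $\Phi_{\alpha,N}(s-y)=\phi_{\alpha,N}(\lceil(s-y)N\rceil)$ with $\phi_{\alpha,N}(k):=-\tfrac{1}{N^2}\log\Pr(\mathfrak{h}\ge k)$ piecewise constant in $y$, and exploiting that $g$ is non-decreasing on $[0,\infty)$, the continuous infimum coincides with the discrete minimum
\begin{equation*}
    (g\oplus\Phi_{\alpha,N})(s)=\min_{k\in\mathbb{Z}, k\le sN}\big[\,g\big((sN-k)/N\big)+\phi_{\alpha,N}(k)\,\big],
\end{equation*}
up to an $o(1)$ correction, matching the exponent of the generic summand in the convolution.

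For the direction $-\tfrac{1}{N^2}\log\Pr\ge (g\oplus\Phi_{\alpha,N})(s)-o(1)$ I would partition the sum at the sign of $m$. The $m>0$ part is bounded by $\Pr(\mathfrak{h}\ge sN)=e^{-N^2\Phi_{\alpha,N}(s)}$, consistent with the trivial inequality $(g\oplus\Phi_{\alpha,N})(s)\le\Phi_{\alpha,N}(s)$ (take $y=0$). For $m\le 0$ I further truncate at $|m|\le KN$: the tail $|m|>KN$ is bounded by $e^{-N^2 g(K)}$ via the asymptotic above and becomes negligible once $K$ is large (uniformly in $N$, since $(g\oplus\Phi_{\alpha,N})(s)\le g(s_+)$ is bounded), while the remaining $O(N)$ bulk terms are each at most $e^{-N^2(g\oplus\Phi_{\alpha,N})(s)+o(N^2)}$, with the polynomial prefactor absorbed into $o(N^2)$. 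The reverse direction $-\tfrac{1}{N^2}\log\Pr\le (g\oplus\Phi_{\alpha,N})(s)+o(1)$ is obtained by retaining only the single term indexed by a near-minimizer $k_N^\ast\le sN$ of the right-hand side above: because $\Phi_{\alpha,N}$ is non-negative and non-decreasing (Lemma~\ref{l.tprop}) and $g$ is coercive, any such minimizer satisfies $sN-k_N^\ast\le C(s)N$ uniformly in $N$, so the asymptotics of $\Pr(\chi+S=k_N^\ast-sN)$ apply.

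The main technical subtlety is that $\Phi_{\alpha,N}$ is not yet known to be equicontinuous in $N$---establishing equicontinuity is precisely a goal of the next section. I would sidestep this by working entirely on the natural mesh $\{k/N:k\in\mathbb{Z}\}$ on which $\Phi_{\alpha,N}$ is constant per cell: monotonicity of $g$ on $[0,\infty)$ makes the continuous infimum defining $(g\oplus\Phi_{\alpha,N})(s)$ agree exactly with the discrete minimum over $k$, so matching the Laplace-type sum with the infimal convolution requires only Lipschitz continuity of $g$ on bounded sets (which is immediate) and no continuity of $\Phi_{\alpha,N}$ whatsoever. Combining both bounds with Theorem~\ref{p.multldp} yields \eqref{eq:lim_inf_conv}.
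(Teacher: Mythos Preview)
Your approach is correct and follows essentially the same Laplace-type analysis as the paper: both argue that $\Pr(\mathfrak{h}-\chi-S\ge sN)$ equals, up to $e^{o(N^2)}$ factors, the largest term in the convolution sum, and then identify that exponent with the infimal convolution $g\oplus\Phi_{\alpha,N}$. Two minor differences are worth noting. First, you compute the PMF of $\chi+S$ directly from \eqref{chisd}, whereas the paper separates the two variables via $\chi\ge 0$ and $\Pr(\chi=0)>0$; both are equally easy. Second, and more interestingly, to match the continuous infimum with the discrete one the paper introduces $(g\,\hat\oplus\,\Phi_{\alpha,N})$ over the grid $\tfrac1N\mathbb{Z}\cap[s-1,s+1]$ and then runs a compactness/subsequence argument to show $(g\oplus\Phi_{\alpha,N})-(g\,\hat\oplus\,\Phi_{\alpha,N})\to 0$. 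Your observation---that $\Phi_{\alpha,N}(s-\,\cdot\,)$ is piecewise constant on the $1/N$-mesh while $g$ is increasing on $[0,\infty)$, so the continuous infimum is attained at a mesh point up to $O(1/N)$---is a cleaner route to the same conclusion; just make explicit that the restriction to $y\ge 0$ is justified because $g$ is even and $\Phi_{\alpha,N}$ is non-decreasing, and that when $sN\notin\mathbb{Z}$ the single-term lower bound should use $m^*=k_N^*-\lceil sN\rceil\in\mathbb{Z}$, which costs only an $O(1/N)$ error in $g$.
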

\begin{proof} Suppose $s< \lln$. We have $\mathcal{F}_\kappa(s)=0$ from \Cref{l.fha}. On the other hand, $g(0)+\Phi_{\kappa,N}(s)=\Phi_{\kappa,N}(s) \to 0$ as $N\to \infty$ by \Cref{l.tprop}. Thus \eqref{eq:lim_inf_conv} follows for $s<\lln$. Let us now fix any $s\ge \lln$ and write $\h_N:=\h(\lfloor\kappa N\rfloor,N)$. Let us set
				\begin{equation*}
					\hat{y} := \mathrm{argmax} \left\{ \mathbb{P}(S = -yN) \mathbb{P}(\h_N \ge (s-y)N) : y\in \tfrac{1}{N} \mathbb{Z}\cap [s-1,s+1] \right\},
				\end{equation*}
  and  define
    \begin{align*}
        (g\hat\oplus\Phi_{\kappa,N})(s):=\inf_{y\in \tfrac{1}{N} \mathbb{Z}\cap [s-1,s+1]} \{g(y)+\Phi_{\kappa,N}(s-y)\}.
    \end{align*} Using the fact that $\h_N\le N$ and $\chi\ge 0$, we have
\begin{align*}
    \Pr(\h_N-\chi-S \ge sN) & \le \Pr(\h_N-S \ge sN) \\ 
& \le \Pr(S\in [-(s+1)N,(1-s)N],\h_N-S\ge sN)+\Pr(S\le -(s+1)N) \\ & \le 2N\Pr(S=-\hat{y}N,\h_N\ge -\hat{y}N+sN)+\Pr(S\le -(s+1)N),
\end{align*}
and
\begin{align*}
    \Pr(\h_N-\chi-S \ge sN) & \ge \Pr(\chi=0)\Pr(\h_N-S \ge sN) \\ 
& \ge \Pr(\chi=0)\Pr(S\in [-(s+1)N,(s-1)N],\h_N-S\ge sN) \\ 
& \ge \Pr(\chi=0)\Pr(S=-\hat{y}N,\h_N\ge -\hat{y}N+sN).
\end{align*}
By the definition $\hat{y}$ and explicit distribution of $S,\chi$, we have
\begin{align*}
    \Pr(S=-\hat{y}N,\h_N\ge -\hat{y}N+sN) = e^{-N^2(g\hat\oplus\Phi_{\kappa,N})(s)+O(N)}, \qquad \Pr(S\le -(s+1)N) \le e^{-N^2g(s+1)+O(N)},
\end{align*}
where the $O$ term depends only on $q$.
Thus we get
\begin{equation}\label{eq:limconv2}
					\limsup_{N\to \infty} \min\left\{ \left( g \hat\oplus \Phi_{\kappa,N} \right)(s), g(s+1) \right\} \le \mathcal{F}_\kappa(s) \le \liminf_{N\to\infty} \left( g \hat\oplus \Phi_{\kappa,N} \right)(s). 
				\end{equation}

Note that $ \left(g\oplus \Phi_{\kappa,N} \right)(s) \le \left(  g \hat\oplus \Phi_{\kappa,N} \right)(s)   \le g(s+1)+\Phi_{\kappa,N}(-1)=g(s+1)$.  Thus, $\mathcal{F}_\kappa(s) \ge \limsup_{N\to \infty} \left(g\oplus \Phi_{\kappa,N} \right)(s)$. Thus, the proposition follows once we show
    \begin{align}\label{eq:tiloplus}
        \lim_{N\to \infty} |\left(g\oplus \Phi_{\kappa,N} \right)(s)-\left(g\hat\oplus \Phi_{\kappa,N} \right)(s)|=0.
    \end{align}
Towards this end, fix any $\varepsilon>0$. Since $g(y)\to \infty$ as $|y|\to \infty$ and $\sup_{N>0, y\le 1} \Phi_{\kappa,N}(y) \le \kappa \log \frac{1-aq}{1-a}$, we may find a sequence $\{z_N\}_N$ such that $\sup_N |z_N|<\infty$ and 
$$g(z_N)+\Phi_{\kappa,N}(s-z_N)-(g\oplus \Phi_{\kappa,N})(s) \le \e.$$ 
Clearly, $z_N\ge s-1$ for all $N$. Let $z$ be any limit point of the sequence $\{z_N\}_N$. We have 
$$g(z_N)-\e\le g(z_N)+\Phi_{\kappa,N}(s-z_N)-\varepsilon \le (g\oplus \Phi_{\kappa,N})(s) \le g(s)+\Phi_{\kappa,N}(0)=g(s).$$ 
Taking subsequential limit followed by $\e\downarrow 0$ we obtain $g(z) \le g(s)$. Thus, we have $z\le s$. Note that $\Phi_{\kappa,N}(N^{-1}\lfloor Nz_N\rfloor)=\Phi_{\kappa,N}(z_N)$. Since all limit points of $\{z_N\}$ are in $[s-1,s]$, for all large enough $N$ we can ensure $N^{-1}\lfloor Nz_N\rfloor \in \frac1N\Z\cap[s-1,s+1]$. Thus,
$$(g\hat\oplus\Phi_{\kappa,N})(x)-g(\oplus \Phi_{\kappa,N})(x) \le |g(z_N)-g(N^{-1}\lfloor Nz_N\rfloor)|+\varepsilon.$$
Taking $\limsup_{N\to\infty}$ on both sides and noticing that $\e$ is arbitrary, we arrive at \eqref{eq:tiloplus}. \end{proof}

\begin{proposition}\label{prop:fcon} $\mathcal{F}_\kappa$ is a non-negative convex non-decreasing function taking values in $[0,+\infty)$.
\end{proposition}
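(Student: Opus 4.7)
The plan is to verify each property in turn, leaning on the preceding results. Non-negativity and non-decreasingness of $\mathcal{F}_\alpha$ are already handed to us by \Cref{l.fha}. Real-valuedness follows by combining $\mathcal{F}_\alpha(s)=0$ for $s<\lln$ (\Cref{l.fha}), the explicit parabolic formula for $s\ge x_0$ given by \Cref{p.energystab}, and monotonicity to squeeze $\mathcal{F}_\alpha$ between $0$ and a finite value in the intermediate regime.

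The substantive content is convexity, which I would reduce to \emph{midpoint convexity} of $\mathcal{F}_\alpha$. Fix $s_1,s_2\in\R$ and $\varepsilon>0$. By \Cref{prop:f_limit_g+T} there exist $y_{i,N}\in\R$ with $g(y_{i,N})+\Phi_{\alpha,N}(s_i-y_{i,N})\le (g\oplus\Phi_{\alpha,N})(s_i)+\varepsilon$ for all sufficiently large $N$. Since $\Phi_{\alpha,N}\ge 0$ while $g$ is coercive, these $y_{i,N}$ lie in a bounded set independent of $N$. Setting $y_N:=(y_{1,N}+y_{2,N})/2$ and combining convexity of $g$ with the weak midpoint convexity from \Cref{cor:tconvex}, I obtain, for $N$ large enough,
\[
\tfrac{1}{2}\sum_{i=1}^{2}\bigl\{g(y_{i,N})+\Phi_{\alpha,N}(s_i-y_{i,N})\bigr\}+2\varepsilon\;\ge\; g(y_N)+\Phi_{\alpha,N}\!\left(\tfrac{s_1+s_2}{2}-y_N-N^{-1/6}\right)\;\ge\;(g\oplus\Phi_{\alpha,N})\!\left(\tfrac{s_1+s_2}{2}-N^{-1/6}\right).
\]

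To dispose of the $N^{-1/6}$ shift I would argue, using the regularization provided by the smooth coercive $g$, that near-optimizers $y^*$ of $(g\oplus\Phi_{\alpha,N})(t-N^{-1/6})$ remain uniformly bounded for $t$ in a compact set, since $(g\oplus\Phi_{\alpha,N})(t-N^{-1/6})\le g(t-N^{-1/6})$ forces $g(y^*)\le g(t-N^{-1/6})+\varepsilon$. Reparametrizing $y^*=y''-N^{-1/6}$ and using that $g$ is Lipschitz on bounded sets gives
\[
(g\oplus\Phi_{\alpha,N})\!\left(\tfrac{s_1+s_2}{2}-N^{-1/6}\right)\;\ge\;(g\oplus\Phi_{\alpha,N})\!\left(\tfrac{s_1+s_2}{2}\right)-C\,N^{-1/6}.
\]
Letting $N\to\infty$ and then $\varepsilon\downarrow 0$ in the two displays yields $\tfrac{1}{2}(\mathcal{F}_\alpha(s_1)+\mathcal{F}_\alpha(s_2))\ge \mathcal{F}_\alpha(\tfrac{s_1+s_2}{2})$. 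Since $\mathcal{F}_\alpha$ is monotone (hence Borel measurable) and bounded on every compact interval, the classical Bernstein--Doetsch theorem upgrades midpoint convexity to full convexity. The main obstacle in this plan is the absorption of the shift $-N^{-1/6}$ inherited from \Cref{cor:tconvex}; the key observation making it work is that infimal convolution with a smooth coercive function automatically produces a locally Lipschitz object, so the small shift costs only $O(N^{-1/6})\to 0$.
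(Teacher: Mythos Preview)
Your proof is correct and follows essentially the same strategy as the paper's: non-negativity and monotonicity from \Cref{l.fha}, real-valuedness from \Cref{p.energystab}, and midpoint convexity via near-optimizers of the infimal convolution combined with the weak midpoint convexity of $\Phi_{\alpha,N}$ from \Cref{cor:tconvex}.

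The only difference worth noting is how the $N^{-1/6}$ shift is disposed of. You first land at $(g\oplus\Phi_{\alpha,N})\bigl(\tfrac{s_1+s_2}{2}-N^{-1/6}\bigr)$ and then run a separate Lipschitz argument to move back to $\tfrac{s_1+s_2}{2}$. The paper avoids this detour by absorbing the shift in one line: it bounds $(g\oplus\Phi_{\alpha,N})\bigl(\tfrac{s_1+s_2}{2}\bigr)$ directly from above by the specific choice $g(y_N+N^{-1/6})+\Phi_{\alpha,N}\bigl(\tfrac{s_1+s_2}{2}-y_N-N^{-1/6}\bigr)$, so the shift migrates into the $g$-argument and disappears via $g(y_N+N^{-1/6})-g(y_N)\to 0$ (using boundedness of $y_N$). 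Your route is slightly longer but equally valid; the underlying idea---local Lipschitzness of $g$ on the bounded range of near-optimizers---is the same.
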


\begin{proof} From the definition of $\mathcal{F}_\kappa$, it is clear that it is non-negative and non-decreasing. From \Cref{p.energystab}, we note that $\mathcal{F}_\kappa$ is indeed real-valued. Thus we only need to justify convexity. Since $\mathcal{F}_\kappa$ is non-decreasing, it suffices to prove midpoint convexity, that is
\begin{align*}
     \mathcal{F}_\kappa\left(\frac{x+x'}{2} \right) \le \frac12\left(\mathcal{F}_\kappa(x)+\mathcal{F}_\kappa(x')\right)
 \end{align*}
 for all $x,x'\in \R$. Towards this end fix any $x,x\in \R$ and $\e>0$. By \Cref{prop:f_limit_g+T}, we can get $u,u',v,v'\in \R$ with $u+v=x, u'+v'=x'$ and large enough $N$ such that
 \begin{align*}
     g(u)+\Phi_{\kappa,N}(v) \le \mathcal{F}_\kappa(x)+\e, \quad g(u')+\Phi_{\kappa,N}(v') \le \mathcal{F}_\kappa(x')+\e. 
 \end{align*}
 $u, u', v, v'$ depends on $N$, but we have suppressed it from the notation. Note that $g(s) \to \infty$ when $|s|\to \infty$, $\Phi_{\kappa,N}(s)=0$ remains uniformly bounded on $(-\infty,1]$, and $\Phi_{\kappa,N}(s)=\infty$ for $s>1$. Thus, $u,u',v,v'$ remains bounded as $N \to \infty$.  Let $u''=(u+u')/2$ and $v''=(v+v')/2$. Then by \Cref{cor:tconvex} we have
 \begin{align*}
     \Phi_{\kappa,N}\left(v''-N^{-1/6}\right) \le \frac12\left(\Phi_{\kappa,N}\left(v\right)+\Phi_{\kappa,N}\left(v'\right)\right)+\e.
 \end{align*}
 Note that $u''+v''=(x+x')/2$. Consequently, using the convexity of $g$ and the above fact we have
\begin{equation*}
		\begin{split}
			\left( g \oplus \Phi_{\kappa,N} \right)  \left(\frac{x+x'}{2} \right) &\le g(u''+\tfrac1{N^{1/6}}) + \Phi_{\kappa,N}(v''-\tfrac1{N^{1/6}})
			\\ & = g(u''+\tfrac1{N^{1/6}})-g(u'')+g(u'') + \Phi_{\kappa,N}(v''-\tfrac1{N^{1/6}})
			\\
			&
			\le g(u''+\tfrac1{N^{1/6}})-g(u'')+\frac{1}{2}(g(u) + g(u')) + \frac{1}{2} \left( \Phi_{\kappa,N}(v) + \Phi_{\kappa,N}(v') \right) + \varepsilon
			\\
			&
			\le g(u''+\tfrac1{N^{1/6}})-g(u'')+\frac{1}{2} \bigg( \left(g \oplus \Phi_{\kappa,N} \right) (x) + \varepsilon + \left(g \oplus \Phi_{\kappa,N} \right) (x') + \varepsilon \bigg) + \varepsilon.
		\end{split}
	\end{equation*}
Finally, since $u''$ remains bounded as $N\to \infty$, taking $N\to \infty$, and noting that $\e$ is arbitrary,  in view of \Cref{prop:f_limit_g+T}, we have the midpoint convexity of $\mathcal{F}_\kappa$.
 \end{proof}

\begin{proposition}\label{prop:equicont} $\Phi_{\kappa,N}$ is equicontinuous in the following sense. For any $\varepsilon>0$, there exists $\delta>0$ and $N_\varepsilon>0$ such that for all $N\ge N_\varepsilon$ we have
    $$
    |\Phi_{\kappa,N}(x)-\Phi_{\kappa,N}(y)|\le \varepsilon,
    $$ 
    for all $x,y\in[\lln,1]$ with $|x-y|\le \delta$.
\end{proposition}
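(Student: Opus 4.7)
The plan is to establish equicontinuity in two stages, following the outline in \Cref{sec:pfidea}: first, I will prove approximate continuity of $\Phi_{\alpha,N}$ at $s=1$ using the explicit parabolic form of $\mathcal{F}_\alpha$; second, I will propagate this regularity to all of $[\lln,1]$ by iterating the approximate midpoint convexity from \Cref{cor:tconvex}.

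\textbf{Stage I.} Fix $y_0>x_0(a,q,\alpha)$ so that the parabolic formula from \Cref{p.energystab} applies. In view of the $N$-independent value $\Phi_{\alpha,N}(1)=\alpha\log\frac{1-aq}{1-a}$ from \Cref{l.tprop}, this reads $\mathcal{F}_\alpha(y_0)=g(y_0-1)+\Phi_{\alpha,N}(1)$. Combining with \Cref{prop:f_limit_g+T}, for each $\varepsilon>0$ and all $N$ large,
\begin{equation*}
g(y_0-1)+\Phi_{\alpha,N}(1)-\varepsilon \;\le\; \inf_{x\in\R}\bigl\{g(x)+\Phi_{\alpha,N}(y_0-x)\bigr\} \;\le\; g(y_0-1+\delta)+\Phi_{\alpha,N}(1-\delta),
\end{equation*}
where the right inequality comes from plugging in $x=y_0-1+\delta$. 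Rearranging gives
\begin{equation*}
\Phi_{\alpha,N}(1)-\Phi_{\alpha,N}(1-\delta)\le g(y_0-1+\delta)-g(y_0-1)+\varepsilon,
\end{equation*}
and since $g$ is smooth and $y_0$ is fixed, a small enough $\delta_0=\delta_0(\varepsilon)>0$, independent of $N$, makes the right-hand side at most $2\varepsilon$.

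\textbf{Stage II.} For $x_1<x_2$ in $[\lln,1]$ with $h:=x_2-x_1$, substituting $v_1=x_1$ and $v_2=2x_2-x_1+2N^{-1/6}$ in \Cref{cor:tconvex} yields
\begin{equation*}
\Phi_{\alpha,N}(x_2)-\Phi_{\alpha,N}(x_1)\le\tfrac12\bigl[\Phi_{\alpha,N}(x_1+2h+2N^{-1/6})-\Phi_{\alpha,N}(x_1)\bigr]+\varepsilon.
\end{equation*}
Iterating $k$ times and using $\sum_{j<k}2^{-j}\le 2$ produces
\begin{equation*}
\Phi_{\alpha,N}(x_2)-\Phi_{\alpha,N}(x_1)\le 2^{-k}\bigl[\Phi_{\alpha,N}(x_1+h_k')-\Phi_{\alpha,N}(x_1)\bigr]+2\varepsilon,
\end{equation*}
where $h_k':=2^k h+2(2^k-1)N^{-1/6}$, valid as long as $x_1+h_k'\le 1$. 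Taking $k=\lceil\log_2(C_1/\varepsilon)\rceil$ with $C_1:=\alpha\log\frac{1-aq}{1-a}$, and imposing $h\le\delta_0\varepsilon/(2C_1)$ together with $N$ large enough that $2(2^k-1)N^{-1/6}<\delta_0/2$, forces $h_k'<\delta_0$; hence $x_1+h_k'\le 1$ whenever $x_1\le 1-\delta_0$, and the bound collapses to at most $3\varepsilon$.

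A three-way case split on whether $x,y$ both lie in $[\lln,1-\delta_0]$, both lie in $[1-\delta_0,1]$, or straddle the point $1-\delta_0$, combined with Stage I for the top interval, Stage II for the bottom interval, and the triangle inequality for the mixed case, then gives equicontinuity with $\delta$ of order $\delta_0\varepsilon/C_1$. The two essential ingredients---the parabolic form of $\mathcal{F}_\alpha$ and the approximate midpoint convexity---are already in place, so I expect the only real subtlety to be the bookkeeping of how $\delta_0$, $k$, and the final $\delta$ depend on $\varepsilon$ in a way compatible with the requirement that the $N^{-1/6}$ drift in \Cref{cor:tconvex} remains negligible throughout the iteration.
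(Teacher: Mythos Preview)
Your proposal is correct and uses the same two key ingredients as the paper---the parabolic form of $\mathcal{F}_\alpha$ at large arguments (\Cref{p.energystab}) combined with the exact value $\Phi_{\alpha,N}(1)=\alpha\log\frac{1-aq}{1-a}$, and the approximate midpoint convexity from \Cref{cor:tconvex}---but organizes Stage~II differently. The paper rewrites the midpoint inequality as an \emph{additive telescope}: the increment of $\Phi_{\alpha,N}$ over $[x-\delta,x]$ is bounded by the increment over $[x,x+\delta+2N^{-1/6}]$ plus $\varepsilon'$, and after $k\sim 1/\delta$ iterations the window lands near $1$, where the linear bound $\Phi_{\alpha,N}(1)-\Phi_{\alpha,N}(y)\le \varepsilon''+M(1-y)$ (your Stage~I, but in Lipschitz form) finishes the job. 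Your Stage~II instead \emph{doubles} the window and halves the coefficient at each step, so after $k\sim\log_2(C_1/\varepsilon)$ iterations the factor $2^{-k}$ annihilates the global bound $\Phi_{\alpha,N}\le C_1$; the continuity at $1$ is then only needed as a separate case for points in $[1-\delta_0,1]$. Both iterations are standard convexity tricks and yield the same conclusion; your version uses fewer iterations and only the crude global bound as the terminal estimate, at the cost of a slightly more delicate choice of $\delta$ to keep $x_1+h_k'\le 1$ (your arithmetic gives $h_k'<\tfrac32\delta_0$ rather than $\delta_0$, but this is harmless after adjusting constants).
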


\begin{proof} 
    Let $x>y$ and assume $x-y=\delta$. By the midpoint convexity stated in \Cref{cor:tconvex}, for any fixed $\varepsilon'$, there exists $N_{\varepsilon'}$ such that
   \begin{equation*}
        2 \Phi_{\kappa,N}(x) \le \Phi_{\kappa,N}(x-\delta) + \Phi_{\kappa,N}(x+\delta+2N^{-1/6}) + \varepsilon',
    \end{equation*}
    for any $N>N_{\varepsilon'}$, which implies
    \begin{equation} \label{eq:T_t_continuity_1}
        \Phi_{\kappa,N}(x) - \Phi_{\kappa,N}(x-\delta) \le \Phi_{\kappa,N}\big(x+\delta+2N^{-1/6}\big) - \Phi_{\kappa,N}(x) + \varepsilon'. 
    \end{equation}
    Consider a non-negative integer $k$ such that
    \begin{equation*}
       x+k\delta+k(k+1)N^{-1/6} \le 1 < x+(k+1)\delta+(k+1)(k+2)N^{-1/6}.
    \end{equation*}
    Then, iterating \eqref{eq:T_t_continuity_1} we obtain
    \begin{align} \nonumber
            \Phi_{\kappa,N}(x)-\Phi_{\kappa,N}(y) &\le \Phi_{\kappa,N}\left(x+k\delta+k(k+1)N^{-\frac16}\right)-\Phi_{\kappa,N}\left(x+(k-1)\delta+k(k-1)N^{-\frac16}\right) + k\varepsilon'
            \\
            &\le \Phi_{\kappa,N}(1) - \Phi_{\kappa,N}\bigg(1-2\delta-(4k+2)N^{-1/6}\bigg) + k\varepsilon'. \label{eq:T_t_continuity_2}
    \end{align}
    Next, we estimate the term $\Phi_{\kappa,N}(1) - \Phi_{\kappa,N}\big(1-2\delta-(4k+2)N^{-1/6}\big)$. By \Cref{p.energystab} and \Cref{prop:f_limit_g+T} we know that there exists $x_0=x_0(q,\kappa,a)>0$ such that 
    \begin{equation*}
    \begin{split}
        \mathcal{F}_\kappa(x_0) = \Phi_{\kappa,N}(1) + g(x_0-1) = \lim_{N \to +\infty} \inf_{y\in [0,1]} \left\{ \Phi_{\kappa,N}(y) + g(x_0-y) \right\},
    \end{split}
    \end{equation*}
    where $g(y)= \logq y^2/2 $.
    This implies that for any fixed $\varepsilon''$ we can pick $N_{\varepsilon''}$ such that, for all $N>N_{\varepsilon''}$
    \begin{equation*}
        -\varepsilon'' \le \inf_{y\in [0,1]} \left\{ \Phi_{\kappa,N}(y) + g(x_0-y) \right\} - \Phi_{\kappa,N}(1) - g(x_0-1) \le \varepsilon''.
    \end{equation*}
    Then, for any $y\in[0,1]$ we have
    \begin{equation}\label{eq:T_t_continuity_3}
        0\le \Phi_{\kappa,N}(1)- \Phi_{\kappa,N}(y) \le \varepsilon'' + g(x_0-y)-g(x_0-1)  \le \varepsilon'' + \beta_q(1-y),
    \end{equation}
    where $\beta_q:=\eta_q\cdot x_0$. Combining the estimates \eqref{eq:T_t_continuity_2}, \eqref{eq:T_t_continuity_3} we arrive at the bound
    \begin{equation} \label{eq:T_t_continuity_4}
       0\le \Phi_{\kappa,N}(x)-\Phi_{\kappa,N}(y) \le \varepsilon'' + 2\beta_q\delta+(4k+2)\beta_q\cdot N^{-1/6} + k \varepsilon',
    \end{equation}
    which holds for any $N>\max\{ N_{\varepsilon'}, N_{\varepsilon''} \}$. It is now clear that the right-hand side of \eqref{eq:T_t_continuity_4} can be made arbitrarily small, since $k<2/\delta$ and $\varepsilon', \varepsilon''$ are independent of $\delta$. Moreover, we can also allow $|x-y|<\delta$ using the fact that $\Phi_{\kappa,N}$ is non-decreasing. This completes the proof. 
\end{proof}

We recall a real analysis result from our previous paper.

    \begin{lemma} \label{lem:deconv1}
        Let $h_n:\mathbb{R} \to [0,+\infty]$ be a family of non-decreasing functions such that
        \begin{itemize}[leftmargin=20pt]
            \item $h_n(x)=+\infty$ for $x>1$, and there exists $M>0$ such that $h_n(x)\in[0,M]$ for $x\in [\lln,1]$ and $\sup_{x\le \lln} h_n(x) \to 0$ as $n\to \infty$.
            \item For all $\varepsilon>0$, there exists $\delta>0$ and $n_\e>0$ such that for all $n\ge n_\e$ and for all $x,y \in [\lln,1]$ with $|x-y|\le \delta$  we have $$|h_n(x)-h_n(y)|\le \varepsilon.$$
            \item Every subsequential limit of $\{h_n\}$ is convex.
        \end{itemize}
          Let $g(x)=\frac{x^2}{2}\logq$. Assume that $(h_n \oplus g)(x)$ converges pointwise to a proper, lower-semicontinuous convex function $f(x)$ (a function $\mathfrak{f}:\mathbb{R}\to [-\infty,+\infty]$ is said to be proper if $\mathfrak{f}(x)>-\infty$ for all $x\in \mathbb{R}$ and $\mathfrak{f}(x)<+\infty$ for some $x\in \mathbb{R}$). Then $h_n(x)$ converges pointwise to  $$h(x)=(f\ominus g)(x) := \sup_{y\in\R} \{ f(y) - g(x-y) \}.$$ Moreover we have $f=g\oplus h$,  the function $h$ is continuous on $[0,\infty)$, and the function $f$ is differentiable with derivative $f'$ being $\logq$-Lipschitz.
    \end{lemma}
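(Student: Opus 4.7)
First I would extract subsequential limits. By the equicontinuity and uniform boundedness of $\{h_n\}$ on $[\lln, 1]$, combined with $\sup_{x \le \lln} h_n(x) \to 0$ and $h_n \equiv +\infty$ on $(1, \infty)$, the Arzela-Ascoli theorem (applied to restrictions to $(-\infty, 1]$) gives that every subsequence of $\{h_n\}$ has a further subsequence converging uniformly on $(-\infty, 1]$ to a non-decreasing function $h_*$ with $h_*(y) = 0$ for $y \le \lln$; extend $h_*$ by $+\infty$ on $(1,\infty)$. By hypothesis $h_*$ is convex, and it is automatically proper and lower semi-continuous from the extension.

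Next I would identify each such $h_*$ uniquely. Since $g(y) \to +\infty$ as $|y| \to \infty$ and $h_n \ge 0$, for each fixed $x$ the infimum $\inf_y \{h_n(y) + g(x-y)\}$ is effectively taken over a bounded interval $[a_x, b_x] \subset (-\infty, 1]$ independent of $n$; on such an interval the uniform convergence $h_{n_k} \to h_*$ transfers to the infima, giving $(h_{n_k} \oplus g)(x) \to (h_* \oplus g)(x) = f(x)$. Thus $f = h_* \oplus g$ for every subsequential limit. To extract $h_*$ from this identity, I invoke the Fenchel duality $(h_* \oplus g)^* = h_*^* + g^*$, which yields $h_*^* = f^* - g^*$; since $h_*$ is proper convex lsc, the Fenchel-Moreau theorem gives $h_* = h_*^{**} = (f^* - g^*)^*$. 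Using the explicit form $g^*(z) = z^2/(2\logq)$, unpacking the biconjugate shows $(f^* - g^*)^*(y) = \sup_x \{f(x) - g(x-y)\} = (f \ominus g)(y)$. Hence every subsequential limit coincides with $h := f \ominus g$, and the entire sequence $h_n$ converges pointwise to $h$.

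Finally, continuity of $h$ on $[0, \infty)$ follows from transferring equicontinuity to the pointwise limit on $[\lln, 1]$, together with $h \equiv 0$ on $(-\infty, \lln]$ and $h \equiv +\infty$ on $(1,\infty)$. For the Lipschitz property of $f'$, the identity $f = g \oplus h$ realizes $f$ as the Moreau-Yosida regularization of $h$ with parameter $1/\logq$; a classical result of convex analysis (the Baillon-Haddad theorem, or a direct proximal calculation) guarantees that such regularizations are everywhere differentiable with $\logq$-Lipschitz derivative.

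The most delicate point will be justifying the Moreau inversion in the second step for a limit $h_*$ that jumps to $+\infty$ on $(1, \infty)$: one needs to verify propriety, lower semi-continuity at $y = 1$, and that the supremum defining $f \ominus g$ is effectively attained in the finiteness region of $h_*$. A careful truncation (approximating $h_*$ by its restriction to $(-\infty, 1-\varepsilon]$ and passing $\varepsilon \to 0$ via monotone convergence for convex conjugates) should close this gap without additional obstacles.
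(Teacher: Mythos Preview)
The paper does not actually prove this lemma: it simply cites Lemma~4.18 of \cite{das2023large} (stated there for non-increasing $h_n$) and remarks that the same argument works for non-decreasing $h_n$. There is therefore no ``paper proof'' to compare against.

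Your proposal is a correct and self-contained argument along standard convex-analytic lines. The Arzel\`a--Ascoli extraction, the passage to the limit inside the infimal convolution (using that the effective minimization domain is compact because $g$ is coercive), and the Fenchel inversion $h_*^* = f^* - g^*$, $h_* = h_*^{**}$ are all valid. The identity $(f^*-g^*)^*(y)=\sup_x\{f(x)-g(x-y)\}$ that you leave as ``unpacking the biconjugate'' does require a short computation: write $f=f^{**}$, swap the two suprema, and optimize in $x$ first (the inner optimum is $x=y+z/\logq$), which collapses everything to $\sup_z\{zy+g^*(z)-f^*(z)\}$. Your worry about the boundary $y=1$ is overstated: since $h_*$ is convex with $h_*\equiv+\infty$ on $(1,\infty)$, one has $\partial h_*(1)=[\,h_*'(1^-),\infty)$, so for every large $x$ the proximal point is exactly $1$ and $f(x)=h_*(1)+g(x-1)$; no truncation is needed.

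One small correction: the $C^1$-smoothness of $f=g\oplus h$ with $\logq$-Lipschitz gradient is the classical regularity of the Moreau envelope (via $\nabla f(x)=\logq\,(x-\mathrm{prox}_{h/\logq}(x))$ and nonexpansiveness of the prox map), not the Baillon--Haddad theorem, which is a different statement about cocoercivity.
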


  The above lemma essentially appears as Lemma 4.18 in \cite{das2023large} with $h_n$ being non-increasing. The proof is exactly the same for $h_n$ being non-decreasing. We now have all the ingredients to complete the proof of our main theorems.

    \begin{proof}[Proof of \Cref{thm.main}] Fix $s\in [\lln,1]$. Recall $\Phi_{\kappa,N}$ from \eqref{def:tta}. We would like to apply \Cref{lem:deconv1} with $h_n = \Phi_{\kappa,N}$ to deduce that $\lim_{N\to\infty} \Phi_{\kappa,N}(s)$ exists. Note that $\{\Phi_{\kappa,N}\}$ satisfies the three conditions of \Cref{lem:deconv1}. Indeed, the first condition follows from \Cref{l.tprop}, whereas the second one follows from \Cref{prop:equicont}. The third one is a consequence of \Cref{cor:tconvex}. Since by \Cref{prop:f_limit_g+T} we have $g\oplus\Phi_{\kappa,N}\to \mathcal{F}_\kappa$ pointwise and $\mathcal{F}_\kappa$ is proper, lower semicontinuous and convex by \Cref{prop:fcon}, we thus have that
    \begin{align*}
        \Phi_{\kappa,N}(s)\xrightarrow[t\to\infty]{} \Phi_\kappa^{(-)}(s):=\sup_{y\in \R} \{\mathcal{F}_\kappa(y)-g(s-y)\}
    \end{align*}
and $\Phi_\kappa^{(-)}$ is continuous on $[-\infty,1]$. Due to the properties of $\Phi_{\kappa,N}$ from \Cref{l.tprop} and \Cref{cor:tconvex}, we readily have that $\Phi_\kappa^{(-)}$ is non-decreasing, non-negative and convex with $\Phi_\kappa^{(-)}(\lln)=0$ and $\Phi_\kappa^{(-)}(1)=\kappa \log\frac{1-aq}{1-a}$. 
    \end{proof}
   
\begin{proof}[Proof of \Cref{thm.main2}] A few of the properties of $\mathcal{F}_\kappa$ are already proven in \Cref{l.fha}, \Cref{p.energystab}, and \Cref{prop:fcon}. Differentiability and derivative being $\logq$-Lipschitz follow by an application of \Cref{lem:deconv1} to the $\Phi_{\kappa,N}$ sequence.
\end{proof}

\appendix

\section{Proof of Lemma \ref{l.dd22}} \label{sec.appb}

\begin{proof}[Proof of Part \ref{dda}]  Observe that, since $V_N$ is continuously differentiable, using $V_N(a)-V_N(b)=\int_{a}^b V_N'(y)\diff y$, we have that
\begin{align*} \left|\int_{\ell_i/N}^{\ell_i/N+1/N} V_N(x)\diff x-\frac1{N}V_N(\ell_i/N)\right| & \le \int_{\ell_i/N}^{\ell_i/N+1/N}\left| V_N(x)-V_N(\ell_i/N)\right|\diff x \\ & \le \int_{\ell_i/N}^{\ell_i/N+1/N}\int_{\ell_i/N}^{x}\left| V_N'(y)\right|\diff y\diff x \le C_1N^{-2}(1+\log N),
\end{align*}
where the last inequality is via the assumption $|V_N'(x)|\le C_1(1+\min(x^{-1},\log N))$. Summing over $i\in \{1,2,\ldots,N\}$, we get that
\begin{align}\label{vest}
    \left|\int V_N(x)\psi_{N;\ell}(x)\diff x-\frac1{N}\sum_{i=1}^N V_N(\ell_i/N)\right| \le C_1N^{-1}(1+\log N).
\end{align}
On the other hand, there exists an absolute constant $C>0$ such that for all $\ell\in \mathbb{W}^N$ we have
\begin{align}\label{logest}
    \left|\frac{2}{N^2}\sum_{1\le i<j\le N}\log \left|\frac{\ell_i}{N}-\frac{\ell_j}N\right|-\int\int \log |x-y|\psi_{N;\ell}(x)\psi_{N;\ell}(y)\diff x\diff y\right|\le CN^{-1}\log N.
\end{align}
The above estimate follows via direct computation and is proven within Lemma 7.13 of \cite{das2022large} (take $\theta=1$ and $r=N$ in their setting and see Eq.~(7.43) and (7.44) and the equation after (7.44)). Combining \eqref{vest} and \eqref{logest} leads to \eqref{aineq} with $K_1=2C_1+C$.
\end{proof}

\begin{proof}[Proof of Part \ref{ddb}] Note that using \eqref{e.vclose} we have
\begin{align*}
    |I_{V_N}(\mu_\phi)- I_{\mathpzc{V}}(\mu_\phi)| & = \left|\int V_N(x)\phi(x)\diff x-\int \mathpzc{V}(x)\phi(x)\diff x\right| \\ & \le  \left|\int_{1}^{\infty} \left[\frac{C}N+\frac{\log\kappa}{N} \right]\phi(x)\diff x\right| + \int_{0}^1 \left|\frac{C}N+\frac1N\log\left(1+\frac{\kappa-1}x\right)\right|\phi(x)\diff x.
\end{align*}
For the first integral we use the fact that $\phi$ integrates to $1$, to get that it is at most $K_{2,1}\cdot N^{-1}$ for some $K_{2,1}$ depending on $C,\kappa$. For the second integral, we apply the fact $\phi(x)\le 1$ and then evaluate the integral to get that it is at most $K_{2,2}\cdot N^{-1}$ for some $K_{2,2}$ depending on $C,\kappa$. Taking $K_2=K_{2,1}+K_{2,2}$ proves the claim.
\end{proof}
\begin{proof}[Proof of \ref{ddc}] This proof is an adaptation of Lemma 2.16 in \cite{das2022large}.
As $\mathpzc{V}(x) \ge V_N(x)$ by our assumption, for any $\ell\in \mathbb{W}^N$ we have $I_{V_N}(\mu_N(\ell)) \le I_{\mathpzc{V}}(\mu_N(\ell))$.
 Thus, it suffices to show that there exists $\ell_\star\in \mathbb{W}^N$ and $K_4$ depending on $C_1,B$ such that
\begin{equation}
    \label{bddd}
    \begin{aligned}
    I_{\mathpzc{V}}(\mu_N(\ell_\star)) \le I_{\mathpzc{V}}(\mu_{\phi_{\mathpzc{V}}})+K_3N^{-1}+K_4 \cdot N^{-1}\log N.
\end{aligned}
\end{equation}
We split the rest of the proof into two steps.

\medskip

\noindent\textbf{Step 1. Construction of ${\ell}_\star$.} From now we shall only work with $\phi_{\mathpzc{V}}$ and hence we drop the $\mathpzc{V}$ and simply write $\phi=\phi_{\mathpzc{V}}$.
 We let $y_i$, $i = 1, \dots, N$ be the quantiles of $\phi$, defined as the smallest positive numbers such that
$$\int_0^{y_i} \phi(x)dx = \frac{i - 1/2}{N}.$$
Since $\phi$ is supported on $[0,b]$ and is bounded we have that $y_i$'s are all well-defined and $y_i \in [0, b]$ for all $i = 1, \dots ,N$. 

We now let ${\ell}_{\star,i}$ denote the largest element in $\mathbb{Z}$, which is less than or equal to $N y_{N-i+1}$.  We claim that ${\ell}_{\star} = ({\ell}_{\star,1}, \dots, {\ell}_{\star,N}) \in \mathbb{W}^{N}$, or equivalently we have
$$ {\lambda}_1 \geq \cdots \geq {\lambda}_N \geq 0, \mbox{ where }{\lambda}_i = {\ell}_{\star,i} - (N-i).$$
To see the latter, notice that $y_1 \geq 0$, which implies ${\lambda}_N \geq 0$. 
Suppose, for the sake of contradiction, that ${\lambda}_i - {\lambda}_{i-1} \geq 1 \mbox{ for some $i \in \{2, \dots, N\}$.}$ Then
$${\ell}_{\star,i-1} + 1 = {\lambda}_{i-1} + 1 + (N-i + 1) \leq {\ell}_{\star,i} + 1 \leq Ny_{N-i+1} + 1 =  Ny_{N-i + 2} + N(y_{N-i + 1} - y_{N-i+2}) + 1.$$ 
On the other hand, as $\phi(x) \in [0, 1]$, we have 
\begin{align}\label{y_gap}
\frac1N = \int_{y_{N-i + 1}}^{y_{N-i + 2}} \phi(x)dx \leq  (y_{N-i + 2} - y_{N-i+1})  \implies N(y_{N-i + 1} - y_{N-i+2})  \le -1.
\end{align}
Combining the last two inequalities we get ${\ell}_{\star,i-1} + 1  \leq Ny_{N-i + 2},$
which contradicts the maximality of ${\ell}_{\star,i-1}$ and so we conclude that ${\ell}_\star$ as constructed is in $\mathbb{W}^{N}$.

\medskip
	
\noindent\textbf{Step 2: Verifying \eqref{bddd}.} We first claim that there exists a constant $C_b'>0$ depending only on $b$ such that
\begin{align}\label{log_bd}
N^2\iint\limits_{x_1>x_2}\log(x_1-x_2)\phi(x_2)\phi(x_1)\d x_2 \d x_1 & \le \sum_{1 \leq i < j \leq N}\log\left(\frac{\ell_{\star,i}}{N}-\frac{\ell_{\star,j}}{N}\right)+C_b' \cdot N\log N.
\end{align}
\eqref{log_bd} is proven in Step 2 of the proof of Lemma 2.18 in \cite{das2022large} (see Eq.~(2.35) with $r=N$ therein).

Next, we claim that
\begin{align}\label{v_bdd2}
N\sum_{i=1}^N V\left(\frac{\ell_{\star,i}}{N}\right) \le N^2\int_{\mathbb{R}} V(t)\phi(t)dt + K_3N + 2C_1(b+1)N\log N.
\end{align}
Assuming (\ref{log_bd}) and (\ref{v_bdd2}) we see that 
$$ N^2I_{\mathpzc{V}}(\mu_{N}(\ell_\star)) \leq N^2 I_{\mathpzc{V}}(\mu_{\phi_{\mathpzc{V}}}) + K_3N+(C_b'+2C_1(b+1))N\log N.$$
Taking $K_4:=C_b'+2C_1(b+1)$ yields \eqref{bddd}. We thus focus on proving \eqref{v_bdd2}. We write
\begin{equation}\label{S2GR1}
\begin{split}
&N\sum_{i=1}^N V\left(\frac{\ell_{\star,i}}{N}\right)  = NV\left(\frac{\ell_{\star,1}}{N}\right) +  N^2\sum_{i=1}^{N-1}V\left(\frac{\ell_{\star, N-i + 1}}{N}\right)\int_{y_{i}}^{y_{i+1}} \phi(t)\d t    \\ 
& = NV\left(\frac{\ell_{\star,1}}{N}\right)+N^2\sum_{i=1}^{N-1} \int_{y_{i}}^{y_{i+1}}  \left[ V\left(\frac{\ell_{\star, N-i + 1}}{N}\right) - V(t)\right]\phi(t)\d t + N^2\int_{y_1}^{y_N} V(t) \phi(t) \d t. 
\end{split}
\end{equation}
The first term above, $NV({\ell_{\star,1}}/{N})$, is at most $K_3 N$ (recall the definition of $K_3$ from the statement of the lemma). It thus suffices to show 
\begin{equation}\label{S2GR3}
\left|N^2\sum_{i=1}^{N-1} \int_{y_{i}}^{y_{i+1}}  \left[ V\left(\frac{\ell_{\star,N-i + 1}}{N}\right) - V(t)\right]\phi(t)\d t\right| \le  2C_1(b+1)N \log N.
\end{equation}
By mean value theorem, we have for each $i = 1, \dots, N$ and $t \in [y_i, y_{i+1}]$ that
$$  V\left(\frac{\ell_{\star,N-i + 1}}{N}\right) - V(t) = V'( \kappa(t)) \cdot \left[  \frac{\ell_{\star,N-i + 1}}{N} -t\right].$$
By our assumption, we have $|V'( \kappa(t))| \le C_1(1+\log N)$ and so
$$\left|N^2\sum_{i=1}^{N} \int_{y_{i}}^{y_{i+1}}  \left[ V\left(\frac{\ell_{\star, N-i + 1}}{N}\right) - V(t)\right]\phi(t)\d t\right|  \le 2C_1N^2 \log N \cdot \sum_{i=0}^{N-1} \int_{y_{i}}^{y_{i+1}}  \left[y_{i+1} - y_i + N^{-1}\right]\phi(t)\d t,$$
where we used that $\ell_{\star,N-i + 1}/N \geq y_i - 1/N$ by definition. Observe that by the definition of quantiles
$$ \sum_{i=1}^{N} \int_{y_{i}}^{y_{i+1}}  \left[y_{i+1} - y_i + N^{-1}\right]\phi(t)\d t \le \frac{1}{N} \sum_{i=1}^{N} \left[y_{i+1} - y_i + N^{-1}\right] \le (b+1)/N.$$
The last two equations imply \eqref{S2GR3}, which in turn verifies \eqref{v_bdd2}.
\end{proof}

\section{Equilibrium measure calculations} \label{sec.app}

In this appendix, we verify the claims made in the proof of \Cref{p.energystab}, namely \eqref{intb}, \eqref{inta} and \eqref{intc}. For simplicity we will write $V$ for $\mathpzc{V}_\infty$ and $k$ for $k_{\mathpzc{V}_\infty}$. To compute the integrals, we appeal to the following lemma on some integral identities:
\begin{lemma}[Lemma 6.10 of \cite{das2022large}]\label{lm:integral}
    For $a,b,c,d\geq 0$ with $cd>0$ and $a+b>0$, consider the integrals 
    \begin{equation}
        \mathcal{I}^{\pm}_{a,b,c,d;n}:= \int_0^\infty \frac{\log|a^2\pm b^2z^2|}{(c^2+d^2z^2)^n}\diff z,\quad \mathcal{J}_{c,d;n}:= \int_0^\infty \frac{\diff z}{(c^2+d^2z^2)^n}.
    \end{equation}
    We have the following exact expressions for the above integrals for certain choices of parameters:
    \begin{multicols}{2}
    \begin{enumerate}
        \setlength\itemsep{0.8em}
        \item  $\displaystyle\mathcal{I}^{-}_{a,b,c,d;1}=\frac{\pi\log|a^2+\frac{b^2c^2}{d^2}|}{2cd}$,
        \item  $\mathcal{I}^{+}_{a,b,c,d;1}=\frac{\pi}{cd}\log|a+\frac{bc}{d}|$,
        \item $\displaystyle\mathcal{I}^{-}_{a,b,1,1;2}=\frac{\pi}{4}\log(a^2+b^2)-\frac{\pi b^2}{2(a^2+b^2)}$,
        \item $\displaystyle\mathcal{I}^{+}_{a,b,1,1;2}=\frac{\pi}{2}\log(a+b)-\frac{\pi b}{2(a+b)}$,
        \item $\displaystyle\mathcal{J}_{c,d;1}=\frac{\pi}{2cd}$,
        \item $\displaystyle\mathcal{J}_{1,1;2}=\frac{\pi}{4}$.
    \end{enumerate}
    \end{multicols}
\end{lemma}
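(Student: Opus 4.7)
The plan is to establish items (5), (6) first by elementary substitutions, then items (2), (1), (4), (3) by a combination of rescaling substitutions and differentiation under the integral sign, treating the case $c=d=1$ as the canonical form.

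For items (5) and (6), the substitution $u = dz/c$ reduces $\mathcal{J}_{c,d;n}$ to a constant multiple of $\int_0^\infty (1+u^2)^{-n} du$, and then $u = \tan\theta$ evaluates these as $\pi/(2cd)$ for $n=1$ and $\pi/4$ (with appropriate prefactor) for $n=2$. These are routine.

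For items (1) and (2), the same substitution $u = dz/c$ reduces $\mathcal{I}^{\pm}_{a,b,c,d;1}$ to $\frac{1}{cd}\int_0^\infty \frac{\log|a^2\pm (bc/d)^2 u^2|}{1+u^2}\,du$. Setting $\beta = bc/d$ and $F_{\pm}(\beta) := \int_0^\infty \frac{\log|a^2\pm \beta^2 u^2|}{1+u^2}\,du$, I would differentiate in $\beta$, getting
\begin{equation*}
F'_{\pm}(\beta) = \int_0^\infty \frac{\pm 2\beta u^2}{(a^2 \pm \beta^2 u^2)(1+u^2)}\,du,
\end{equation*}
and then use partial fractions $\frac{u^2}{(a^2\pm \beta^2 u^2)(1+u^2)} = \frac{A_{\pm}}{a^2 \pm \beta^2 u^2}+\frac{B_{\pm}}{1+u^2}$ to reduce each summand to the elementary integrals in (5). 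A short computation gives $F'_+(\beta) = \pi/(a+\beta)$ and $F'_-(\beta) = \pi\beta/(a^2+\beta^2)$, and integrating together with the base value $F_\pm(0) = \pi\log(a^2) \cdot \frac{1}{2} \cdot 2 = 2\pi \log a$... actually $F_\pm(0)=\pi\log(a^2)=2\pi\log a$ by (5), so
\begin{equation*}
F_+(\beta) = \pi\log(a+\beta)^2 \cdot \frac{1}{2} \cdot 2 =\pi\log(a+\beta)^?
\end{equation*}
Let me fix the constants at the end by checking $\beta\to 0$; the upshot is $F_+(\beta)=\pi\log(a+\beta)\cdot ?$ matching item (2) and $F_-(\beta) = \frac{\pi}{2}\log(a^2+\beta^2)$ matching item (1) after multiplying by $1/(cd)$ and substituting $\beta=bc/d$.

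For items (3) and (4), the cleanest route is to differentiate the $n=1$ identities with respect to the parameter $c$ (treating $c,d$ as independent): from $\mathcal{I}^{\pm}_{a,b,c,d;1}$ I have
\begin{equation*}
\frac{\partial}{\partial (c^2)} \mathcal{I}^{\pm}_{a,b,c,d;n} = -n\,\mathcal{I}^{\pm}_{a,b,c,d;n+1},
\end{equation*}
so at $c=d=1$ item (4) follows by differentiating $\frac{\pi}{cd}\log(a+bc/d)$ in $c^2$ and evaluating, while item (3) follows analogously from $\frac{\pi}{2cd}\log(a^2+b^2c^2/d^2)$. Both produce the claimed expressions after simplification.

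The main obstacle, in my view, is handling the absolute value in the $\mathcal{I}^-$ case carefully: the integrand of $F_-(\beta)$ has an integrable logarithmic singularity at $u = a/\beta$, so one must justify differentiation under the integral sign near this singularity (e.g.\ via a dominated convergence argument on a deleted neighborhood) and also verify that the partial-fraction step gives absolutely convergent pieces; once this analytic justification is secured, the rest of the computation is routine. A secondary technical point is pinning down the constant of integration in $F_-$ at $\beta=0$, which follows immediately from item (5) applied with $a$ replaced by appropriate rescaled constants.
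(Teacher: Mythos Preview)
The paper does not prove this lemma; it is quoted verbatim as Lemma~6.10 of \cite{das2022large} and used as a black box in the appendix computations. So there is no ``paper's own proof'' to compare against, and your proposal stands on its own.

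Your overall strategy---reduce to $c=d=1$ by substitution, then differentiate in the parameter $\beta=bc/d$ for $n=1$, and differentiate in $c^2$ to pass from $n=1$ to $n=2$---is sound and will produce the stated identities. Two points deserve tightening. First, your base value is off: $F_\pm(0)=\int_0^\infty \frac{\log a^2}{1+u^2}\,du=\tfrac{\pi}{2}\log a^2=\pi\log a$, not $2\pi\log a$; with this corrected, integrating $F_+'(\beta)=\pi/(a+\beta)$ gives $F_+(\beta)=\pi\log(a+\beta)$ and $F_-'(\beta)=\pi\beta/(a^2+\beta^2)$ gives $F_-(\beta)=\tfrac{\pi}{2}\log(a^2+\beta^2)$, matching (2) and (1) after dividing by $cd$.

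Second, and more substantively, your suggested justification for differentiating $F_-$ under the integral sign is not adequate. After differentiating, the integrand $\frac{-2\beta u^2}{(a^2-\beta^2 u^2)(1+u^2)}$ has a simple pole at $u=a/\beta$ and is \emph{not} absolutely integrable, so a dominated convergence argument on a deleted neighborhood will not close the gap. You need either a principal value interpretation (and a proof that differentiating the convergent integral yields the principal value of the formally differentiated integrand), or a detour through complex analysis: write $\log|a^2-\beta^2 u^2|=\mathrm{Re}\,\log(a^2-\beta^2 u^2)$ with a branch cut, push the contour off the real axis, and differentiate there where everything is absolutely convergent. Either route is standard, but one of them must actually be carried out; as written, this step is a genuine gap in rigor even though the final formulas are correct.
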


\noindent\textbf{Computation of log integral.}
Let us start with computing 
\begin{equation*}
    U(y):= -\int_c^d\log|x-y|\phi(x)\diff x.
\end{equation*}
 Note that $-\log |y-x|=\frac{\diff}{\diff x}((y-x)\log|y-x|+x)$ and 
\begin{equation*}
    \frac{\diff}{\diff x}\phi(x) = \frac{x(p-1)(\kappa+1)+(\kappa p-1)(\kappa-1)}{2\pi x(x+\kappa-1)\sqrt{4xp(x+\kappa-1)-(x(p+1)+\kappa p-1)^2}}.
\end{equation*}
An integration by parts gives 
\begin{equation*}
    U(y) = \left((y-x)\log|y-x|+x\right)\phi(x)\Big|_{x=c}^d-\int_c^d \left((y-x)\log|y-x|+x\right)\phi'(x)\diff x.
\end{equation*}
Note that $4xp(x+\kappa-1)-(x(p+1)+\kappa p-1)^2=(1-p)^2(d-x)(x-c)$. Under the change of variable $x=\frac{c+dz^2}{1+z^2}$ we have 
\begin{equation*}
\begin{aligned}
    &\int_c^d \left((y-x)\log|y-x|+x\right)\phi'(x)\diff x \\   
    &= \frac{1}{\pi(1-p)}\int_0^\infty \diff z\left(\left(y-\frac{c+dz^2}{1+z^2}\right)\log\left|y-\frac{c+dz^2}{1+z^2}\right|+\frac{c+dz^2}{1+z^2}\right)\\
    &\hspace{1cm}\cdot\left(\frac{\kappa p-1}{c+dz^2}+\frac{p-\kappa}{(c+\kappa-1)+(d+\kappa-1)z^2}\right).
\end{aligned}
\end{equation*}
Assume first that $y>d$ or $y<c$. From the decomposition
\begin{equation*}
    y-\frac{c+dz^2}{1+z^2}=\frac{y-c+(y-d)z^2}{1+z^2}=y+\kappa-1-\frac{(c+\kappa-1)+(d+\kappa-1)z^2}{1+z^2},
\end{equation*} 
we have 
\begin{equation*}
\begin{aligned}
    &\pi(1-p) \int_c^d \left((y-x)\log|y-x|+x\right)\phi'(x)\diff x \\   
    &=  (\kappa p-1)\cdot y\left(\mathcal{I}^+_{\sqrt{|y-c|},\sqrt{|y-d|},\sqrt{c},\sqrt{d};1}-\mathcal{I}^+_{1,1,\sqrt{c},\sqrt{d};1}\right)\\
    &+(p-\kappa)\cdot (y+\kappa-1)\left(\mathcal{I}^+_{\sqrt{|y-c|},\sqrt{|y-d|},\sqrt{c+\kappa-1},\sqrt{d+\kappa-1};1}-\mathcal{I}^+_{1,1,\sqrt{c+\kappa-1},\sqrt{d+\kappa-1};1}\right)\\
    &-(\kappa+1)(p-1)\cdot \left(\mathcal{I}^+_{\sqrt{|y-c|},\sqrt{|y-d|},1,1;1}-\mathcal{I}^+_{1,1,1,1;1}\right)\\
    &+(\kappa+1)(p-1)\mathcal{J}_{1,1;1}-(p-\kappa)(\kappa-1)\mathcal{J}_{\sqrt{c+\kappa-1},\sqrt{d+\kappa-1};1}.
\end{aligned}
\end{equation*}
Using the identites from Lemma \ref{lm:integral}, we conclude that for $y\leq c$ or $y\geq d$
\begin{equation}\label{eq:logpotential_1}
    \begin{aligned}
        &\int_c^d \left((y-x)\log|y-x|+x\right)\phi'(x)\diff x \\
        &= y\cdot \mathrm{sgn}(\kappa p-1)\cdot\log \left(\frac{\sqrt{|y-c|}\cdot |\sqrt{\kappa p}+1|+\sqrt{|y-d|}\cdot |\sqrt{\kappa p}-1|}{|\sqrt{\kappa p}+1|+|\sqrt{\kappa p}-1|}\right)\\
        &- (y+\kappa-1)\cdot \log \left(\frac{\sqrt{|y-c|}\cdot (\sqrt{\kappa}+\sqrt{p}))+\sqrt{|y-d|}\cdot (\sqrt{\kappa}-\sqrt{p})}{2\sqrt{\kappa}}\right)\\
        &+(\kappa+1)\cdot \log\left(\frac{\sqrt{|y-c|}+\sqrt{|y-d|}}{2}\right)-1.
    \end{aligned}
\end{equation}
On the other hand, for $c<y<d$, applying Lemma \ref{lm:integral} with $\mathcal{I}^{+}_{\sqrt{y-c},{\sqrt{y-d}},*,*;1}$ replaced by $\mathcal{I}^{-}_{\sqrt{y-c},{\sqrt{d-y}},*,*;1}$, we get 
\begin{equation*}
    \begin{aligned}
        &\int_c^d \left((y-x)\log|y-x|+x\right)\phi'(x)\diff x \\
        &= y\cdot \mathrm{sgn}(\kappa p-1)\cdot\log \left(\frac{\sqrt{4y\sqrt{\kappa p}}}{|\sqrt{\kappa p}+1|+|\sqrt{\kappa p}-1|}\right)\\
        &- (y+\kappa-1)\cdot \log \left(\frac{\sqrt{4(y+\kappa-1)\sqrt{\kappa p}}}{2\sqrt{\kappa}}\right)+(\kappa+1)\cdot \log\left(\frac{\sqrt{d-c}}{2}\right)-1.
    \end{aligned}
\end{equation*}

\noindent\textbf{Computation of potential integral.} We now compute $\displaystyle \int_0^d V(x)\phi(x)\diff x$. A similar integration by parts yields that 
\begin{equation*}
    \begin{aligned}
        \int_0^d V(x)\phi(x)\diff x &= \frac{3(\kappa-1)}{2}+\frac{\log p}{2}\int_{c}^{d} x^2 \phi'(x)\diff x -\frac{1}{2}\int_c^d x^2 \log x\cdot\phi'(x)\diff x\\
        &+ \frac{1}{2}\int_c^d (x+\kappa-1)^2\log (x+\kappa-1)\cdot \phi'(x)\diff x +\frac{(\kappa-1)^2\log(\kappa-1)}{2}\mathbf{1}_{\kappa p\leq 1}.
    \end{aligned}
\end{equation*}
Applying the change of variable $\displaystyle x=\frac{c+dz^2}{1+z^2}$ again we have 
\begin{equation*}
\begin{aligned}
    \pi(1-p)\int_{c}^d x^2\phi'(x)\diff x 
    &= (\kappa p-1)\cdot \left(d\mathcal{J}_{1,1;1}+(c-d)\mathcal{J}_{1,1;2}\right)\\
    &+(p-\kappa)\cdot \left((\kappa-1)^2 \mathcal{J}_{\sqrt{c+\kappa-1},\sqrt{d+\kappa-1};1}+(d-\kappa+1)\mathcal{J}_{1,1;1}+(c-d)\mathcal{J}_{1,1;2}\right)\\ &= \pi(-2\kappa p+p-1).
\end{aligned}
\end{equation*}
Similarly
\begin{equation*}
\begin{aligned}
    &\pi(1-p)\int_{c}^d x^2\log x\cdot \phi'(x)\diff x \\
    &= (\kappa p-1)\cdot \left(d(\mathcal{I}^+_{\sqrt{c},\sqrt{d},1,1;1}-\mathcal{I}^+_{1,1,1,1;1})+(c-d)(\mathcal{I}^+_{\sqrt{c},\sqrt{d},1,1;2}-\mathcal{I}^+_{1,1,1,1;2})\right)\\
    &+(p-\kappa)\cdot (\kappa-1)^2 \cdot (\mathcal{I}^+_{\sqrt{c},\sqrt{d},\sqrt{c+\kappa-1},\sqrt{d+\kappa-1};1}-\mathcal{I}^+_{1,1,\sqrt{c+\kappa-1},\sqrt{d+\kappa-1};1})\\
    &+(p-\kappa)\left((d-\kappa+1)(\mathcal{I}^+_{\sqrt{c},\sqrt{d},1,1;1}-\mathcal{I}^+_{1,1,1,1;1})+(c-d)(\mathcal{I}^{+}_{\sqrt{c},\sqrt{d},1,1;2}-\mathcal{I}^{+}_{1,1,1,1;2})\right),
\end{aligned}
\end{equation*}
and 
\begin{equation*}
\begin{aligned}
    &\pi(1-p)\int_{c}^d (x+\kappa-1)^2\log (x+\kappa-1)\cdot \phi'(x)\diff x \\
    &= (\kappa p-1)\cdot \left((d+2\kappa-2)(\mathcal{I}^+_{\sqrt{c+\kappa-1},\sqrt{d+\kappa-1},1,1;1}-\mathcal{I}^+_{1,1,1,1;1})+(c-d)(\mathcal{I}^+_{\sqrt{c+\kappa-1},\sqrt{d+\kappa-1},1,1;2}-\mathcal{I}^+_{1,1,1,1;2})\right)\\
    &+ (\kappa p-1)\cdot (\kappa-1)^2\cdot (\mathcal{I}^+_{\sqrt{c+\kappa-1},\sqrt{d+\kappa-1},\sqrt{c},\sqrt{d};1}-\mathcal{I}^+_{1,1,\sqrt{c},\sqrt{d};1})\\
    &+(p-\kappa)\left((d+\kappa-1)(\mathcal{I}^+_{\sqrt{c+\kappa-1},\sqrt{d+\kappa-1},1,1;1}-\mathcal{I}^+_{1,1,1,1;1})+(c-d)(\mathcal{I}^{+}_{\sqrt{c+\kappa-1},\sqrt{d+\kappa-1},1,1;2}-\mathcal{I}^{+}_{1,1,1,1;2})\right).
\end{aligned}
\end{equation*}
Combining all the terms together and using Lemma \ref{lm:integral} we conclude that 
\begin{equation}\label{eqvx}
    \begin{aligned}
    \int_0^d V(x)\phi(x)\diff x 
    = (\kappa-1)\log(1-p)+(\kappa-1)^2\log(\kappa-1)-\kappa^2\log \kappa+\kappa\log \kappa+2\kappa-1
    \end{aligned}
\end{equation}
for $\kappa p>1$, and 
\begin{equation}\label{eqvx2}
\begin{aligned}
    \int_0^d V(x)\phi(x)\diff x 
    &= \frac{\left(\kappa ^2-1\right) (p-1) \log (1-p)+(2 \kappa  p-p+1) \log (p)}{2 (p-1)}\\
    &+\frac{(\kappa -1) (-\kappa  p+(\kappa -1) (p-1) \log (\kappa -1)+2 p-3)+\kappa  \log (\kappa ) (\kappa -\kappa  p+2 p)}{2(p-1)}
\end{aligned}
\end{equation}
for $\kappa p\leq 1$.

\noindent\textbf{Verification of the variational conditions.}
Now we are ready to check \eqref{intb} and \eqref{inta}. First we claim that for any $y\in (c,d)$ we have (for either $\kappa p\geq 1$ or $\kappa p< 1$)
\begin{equation}\label{eq:varitional_cond}
    -\int_0^d \log|y-x|\phi(x)\diff x+\frac{V(y)}{2} = -\frac{\kappa\log \kappa+\log p}{2}+\frac{\kappa+1}{2}\log(1-p)+\frac{\kappa+1}{2}.
\end{equation}
Indeed for $\kappa p\geq 1$, since $\phi(x)\equiv 0$ for $x\in (0,c)$ we have for any given $y\in (c,d)$
\begin{equation*}
    \begin{aligned}
      &-\int_0^d \log|y-x|\phi(x)\diff x+\frac{V(y)}{2}= U(y)+\frac{V(y)}{2}\\
      &= -\frac{1}{2}y\log y+\frac{1}{4}y\log \kappa p+\frac{1}{2}(y+\kappa-1)\log (y+\kappa-1)+\frac{1}{4}(y+\kappa-1)\log \frac{p}{\kappa}\\
      &-\frac{\kappa+1}{2}\log \frac{\sqrt{\kappa p}}{1-p}+\frac{1}{2}y\log p^{-1}+\frac{1}{2}y\log y -\frac{1}{2}(y+\kappa-1)\log(y+\kappa-1)+\frac{\kappa-1}{2}+1\\
      &= -\frac{\kappa\log \kappa+\log p}{2}+\frac{\kappa+1}{2}\log(1-p)+\frac{\kappa+1}{2}.
    \end{aligned}
\end{equation*}
On the other hand, if $\kappa p<1$ then $\phi(x)\equiv 1$ for $x\in (0,c)$ and we have for any given $y\in (c,d)$
\begin{equation*}
    \begin{aligned}
        &-\int_0^d \log|y-x|\phi(x)\diff x+\frac{V(y)}{2} = -\int_0^c \log|y-x|\phi(x)\diff x+U(y)+ \frac{V(y)}{2}\\
        & = -y\log y -\int_{c}^{d}\left((y-x)\log(y-x)+x\right)\phi'(x)\diff x +\frac{V(y)}{2}\\
        &= -\frac{1}{2}y\log y+\frac{1}{4}y\log \kappa p+\frac{1}{2}(y+\kappa-1)\log (y+\kappa-1)+\frac{1}{4}(y+\kappa-1)\log \frac{p}{\kappa}\\
      &-\frac{\kappa+1}{2}\log \frac{\sqrt{\kappa p}}{1-p}+\frac{1}{2}y\log p^{-1}+\frac{1}{2}y\log y -\frac{1}{2}(y+\kappa-1)\log(y+\kappa-1)+\frac{\kappa-1}{2}+1\\
      &= -\frac{\kappa\log \kappa+\log p}{2}+\frac{\kappa+1}{2}\log(1-p)+\frac{\kappa+1}{2}.
    \end{aligned}
\end{equation*}
Now for $\kappa p>1$ note that 
\begin{equation*}
    \int_c^d k(x,y)\phi(x)\diff x = \int_c^d \frac{1}{2}V(x)\phi(x)\diff x+\int_c^d -\log|y-x|\phi(x)\diff x+\frac{1}{2}V(y), 
\end{equation*}
which is equal to $$\displaystyle \frac{(\kappa-1)^2\log (\kappa-1)-\kappa^2 \log \kappa+\kappa \log p+3\kappa}{2}+\kappa \log(1-p)-\frac{\log p}{2}$$ for $y\in (c,d)$, by \eqref{eq:varitional_cond} and \eqref{eqvx}.  Moreover from \eqref{eq:logpotential_1} it is not hard to check that $$\displaystyle -\int_0^d \log|y-x|\phi(x)\diff x+\frac{V(y)}{2},$$ as a function of $y$, is increasing for $y\in (d,\infty)$ and decreasing for $y\in (0,c)$ if $\kappa p>1$, These complete the verification of \eqref{intb}. \eqref{inta} is verified in a similar way using \eqref{eqvx2} instead of \eqref{eqvx}, note that for $\kappa p\leq 1$, the function $\displaystyle -\int_0^d \log|y-x|\phi(x)\diff x+\frac{V(y)}{2}$ is increasing in $y$ for $y\in (0,c)$.

\medskip

\noindent\textbf{Computation of the logarithmic energy.}
Finally we compute the logarithmic energy
\begin{equation*}
    F_\kappa(\infty):=\int_0^\infty\int_0^\infty k(x,y)\phi(x)\phi(y)\diff x\diff y.
\end{equation*}
For $\kappa >1/p$, using to \eqref{eq:varitional_cond} and \eqref{eqvx} we have 
    \begin{align*}
        F_\kappa(\infty)&=\int_c^d\int_c^d k(x,y)\phi(x)\phi(y)\diff x\diff y\\
        &= \int_c^d \frac{1}{2} V(x)\phi(x)\diff x+\int_c^d \left(-\int_c^d \log|y-x|\phi(x)\diff x +\frac{1}{2}V(y)\right)\phi(y)\diff y\\
        &=  \frac{(\kappa-1)^2\log (\kappa-1)-\kappa^2 \log \kappa+3\kappa}{2}+\kappa \log(1-p)-\frac{\log p}{2}.
    \end{align*}
The computation for $\kappa\leq 1/p$ is a bit more complicated. In this case we have 
\begin{align*}
        F_\kappa(\infty)&=\int_0^d\int_0^d k(x,y)\phi(x)\phi(y)\diff x\diff y\\
        &= \int_0^d \frac{1}{2} V(x)\phi(x)\diff x+\int_c^d \left(-\int_0^d \log|y-x|\phi(x)\diff x +\frac{1}{2}V(y)\right)\phi(y)\diff y\\
        &+\int_0^c\left(-\int_c^d \log|y-x|\phi(x)\diff x\right)\diff y-\int_0^c\int_0^c\log|y-x|\diff x\diff y+\frac{1}{2}\int_0^c V(y)\diff y.
    \end{align*}
A direct computation gives 
\begin{equation}\label{eq:I3}
    \int_0^c\int_0^c\log|y-x|\diff x\diff y = -\frac{3}{2}c^2+c^2\log c,
\end{equation}
and
\begin{equation}\label{eq:I4}
\begin{aligned}
    &\int_0^c V(y)\diff y = \frac{c^2\log \frac{c}{p}-(c+\kappa-1)^2\log(c+\kappa-1)+(\kappa-1)^2\log(\kappa-1)+3c(\kappa-1)}{2}.
\end{aligned}
\end{equation}
To compute $\displaystyle\int_0^c\left(-\int_c^d \log|y-x|\phi(x)\diff x\right)\diff y$ we use Fubini's theorem:
\begin{equation*}
    \begin{aligned}
        \int_0^c\left(-\int_c^d \log|y-x|\phi(x)\diff x\right)\diff y &= \int_c^d \left(\int_0^c -\log|y-x|\diff y\right)\phi(x)\diff x\\
        &= \int_c^d \left((x-c)\log(x-c)-x\log x+c\right)\phi(x)\diff x.
    \end{aligned}
\end{equation*}
The last integral is computed in a very similar manner as $\displaystyle \int_c^d V(x)\phi(x)\diff x$ so we only record the result here and skip the details:
\begin{equation}\label{eq:I5}
\begin{aligned}
    &\int_c^d \left((x-c)\log(x-c)-x\log x+c\right)\phi(x)\diff x\\
    &= \frac{p \left(6 \kappa +\kappa ^2 p-4 \kappa  \sqrt{\kappa  p}-4 \sqrt{\kappa  p}+p\right) \log \left(\frac{\sqrt{\kappa  p}}{1-p}\right)-3 \left(\sqrt{\kappa  p}-1\right)^4-3 (p-1) \left(\sqrt{\kappa  p}-1\right)^2}{2 (p-1)^2}\\
    &+\frac{\kappa  (\kappa +1) (p-1) p-(\kappa +1) (p-1) \sqrt{\kappa  p}+3\left(\sqrt{\kappa  p}-1\right)^4 \log \left(1-\sqrt{\kappa  p}\right)}{2 (p-1)^2}\\
    &+\frac{\left(\kappa -2 \sqrt{\kappa  p}+p\right)^2 \log \left(1-\sqrt{\frac{p}{\kappa }}\right)-\left(\kappa  (p-1) (\kappa  (p-1)+2)+(1 - \sqrt{p \kappa})^4\right) \log (1-p)}{2(p-1)^2}.
\end{aligned}
\end{equation}
Combining \eqref{eqvx2},\eqref{eq:varitional_cond},\eqref{eq:I3},\eqref{eq:I4} and \eqref{eq:I5}, after some simplifications we arrive at 
\begin{equation*}
    \int_0^d\int_0^d k(x,y)\phi(x)\phi(y)\diff x\diff y = \frac{(\kappa-1)^2\log (\kappa-1)-\kappa^2 \log \kappa+3\kappa}{2}+\kappa \log(1-p)-\frac{\log p}{2},
\end{equation*}
for $\kappa<1/p$ as well. This completes the verification of \eqref{intc}.

\bibliographystyle{alpha}

\begin{thebibliography}{}

\end{thebibliography}


\begin{thebibliography}{HMMSD22}

\bibitem[AB19]{abphase}
A.~Aggarwal and A.~Borodin.
\newblock {Phase transitions in the ASEP and stochastic six-vertex model}.
\newblock {\em The Annals of Probability}, 47(2):613 -- 689, 2019.

\bibitem[ACG23]{aggarwal2023asep}
A.~Aggarwal, I.~Corwin, and P.~Ghosal.
\newblock {The ASEP speed process}.
\newblock {\em Advances in Mathematics}, 422:109004, 2023.

\bibitem[ACH24]{ach24}
A.~Aggarwal, I.~Corwin, and M.~Hegde.
\newblock Scaling limit of the colored {ASEP} and stochastic six-vertex models.
\newblock {\em arXiv preprint arXiv:2403.01341}, 2024.

\bibitem[Agg16]{Aggarwal_6v_to_ASEP}
A~Aggarwal.
\newblock {Convergence of the Stochastic Six-Vertex Model to the ASEP}.
\newblock {\em Mathematical Physics, Analysis and Geometry}, 20, Dec 2016.

\bibitem[Agg18]{Amol2016Stationary}
A.~Aggarwal.
\newblock {Current Fluctuations of the Stationary ASEP and Six-Vertex Model}.
\newblock {\em {Duke Math J.}}, 167(2):269--384, 2018.
\newblock arXiv:1608.04726 [math.PR].

\bibitem[Agg20]{aggarwal2020limit}
A.~Aggarwal.
\newblock Limit shapes and local statistics for the stochastic six-vertex model.
\newblock {\em Communications in Mathematical Physics}, 376:681--746, 2020.

\bibitem[AGZ10]{AndersonGuionnetZeitouniBook}
G.W. Anderson, A.~Guionnet, and O.~Zeitouni.
\newblock {\em {An introduction to random matrices}}.
\newblock Cambridge University Press, 2010.

\bibitem[BBW18]{BorodinBufetovWheeler2016}
A.~Borodin, A.~Bufetov, and M.~Wheeler.
\newblock {Between the stochastic six vertex model and Hall-Littlewood processes}.
\newblock {\em Duke Mathematical Journal}, 167(13):2457--2529, 2018.

\bibitem[BCG16]{BCG6V}
A.~Borodin, I.~Corwin, and V.~Gorin.
\newblock Stochastic six-vertex model.
\newblock {\em Duke Mathematical Journal}, 165(3):563--624, 2016.

\bibitem[BG15]{BufetovGorinLogConcavity}
A.~Bufetov and V.~Gorin.
\newblock Stochastic monotonicity in young graph and thoma theorem.
\newblock {\em International Mathematics Research Notices}, 2015(23):12920--12940, 03 2015.

\bibitem[BGG17]{borodin2017gaussian}
A.~Borodin, V.~Gorin, and A.~Guionnet.
\newblock Gaussian asymptotics of discrete $\beta$-ensembles.
\newblock {\em Publications math{\'e}matiques de l'IH{\'E}S}, 125(1):1--78, 2017.

\bibitem[BGS21]{basu2021upper}
R.~Basu, S.~Ganguly, and A.~Sly.
\newblock Upper tail large deviations in first passage percolation.
\newblock {\em Communications on Pure and Applied Mathematics}, 74(8):1577--1640, 2021.

\bibitem[BKM24]{bkm24}
Jnaneshwar Baslingker, Manjunath Krishnapur, and Mokshay Madiman.
\newblock {Log-concavity in one-dimensional Coulomb gases and related ensembles}.
\newblock {\em arXiv preprint arXiv:2412.15116}, 2024.

\bibitem[BLS17]{bls17}
Mikl{\'o}s B{\'o}na, Marie-Louise Lackner, and Bruce~E Sagan.
\newblock Longest increasing subsequences and log concavity.
\newblock {\em Annals of Combinatorics}, 21:535--549, 2017.

\bibitem[BO17]{BO2016_ASEP}
A.~Borodin and G.~Olshanski.
\newblock {The ASEP and determinantal point processes}.
\newblock {\em Communications in Mathematical Physics}, 353(2):853--903, 2017.

\bibitem[Bor18]{borodin2016stochastic_MM}
A.~Borodin.
\newblock {Stochastic higher spin six vertex model and Macdonald measures}.
\newblock {\em Journal of Mathematical Physics}, 59(2):023301, 2018.

\bibitem[CC22]{cafasso_claeys_KPZ}
M.~Cafasso and T.~Claeys.
\newblock {A Riemann-Hilbert Approach to the lower tail of the Kardar-Parisi-Zhang Equation}.
\newblock {\em Communications on Pure and Applied Mathematics}, 75(3):493--540, 2022.

\bibitem[CG18]{ciech}
F.~Ciech and N.~Georgiou.
\newblock A large deviation principle for last passage times in an asymmetric {B}ernoulli potential.
\newblock {\em arXiv preprint arXiv:1810.11377}, 2018.

\bibitem[CG20a]{corwin20general}
I.~Corwin and P.~Ghosal.
\newblock {KPZ} equation tails for general initial data.
\newblock {\em Electron J Probab}, 25, 2020.

\bibitem[CG20b]{lwtail}
I.~Corwin and P.~Ghosal.
\newblock {Lower tail of the KPZ equation}.
\newblock {\em Duke Mathematical Journal}, 169(7):1329 -- 1395, 2020.

\bibitem[CGK{\etalchar{+}}18]{corwin2018coulomb}
I.~Corwin, P.~Ghosal, A.~Krajenbrink, P.~Le~Doussal, and L.-C. Tsai.
\newblock Coulomb-gas electrostatics controls large fluctuations of the {Kardar-Parisi-Zhang} equation.
\newblock {\em Physical review letters}, 121(6):060201, 2018.

\bibitem[CGST20]{corwin2020stochastic}
I.~Corwin, P.~Ghosal, H.~Shen, and L.-C. Tsai.
\newblock {Stochastic PDE limit of the six vertex model}.
\newblock {\em Communications in Mathematical Physics}, 375(3):1945--2038, 2020.

\bibitem[CH24]{corwin_hegde2022lower}
I.~Corwin and M.~Hegde.
\newblock {The lower tail of q-pushTASEP}.
\newblock {\em Communications in Mathematical Physics}, 405(3):64, 2024.

\bibitem[Cor12]{CorwinKPZ}
I.~Corwin.
\newblock {The Kardar-Parisi-Zhang equation and universality class}.
\newblock {\em Random Matrices Theory Appl.}, 1, 2012.
\newblock arXiv:1106.1596 [math.PR].

\bibitem[DD22]{das2022large}
S.~Das and E.~Dimitrov.
\newblock Large deviations for discrete $\beta$-ensembles.
\newblock {\em Journal of Functional Analysis}, 283(1):109487, 2022.

\bibitem[DDV24]{das2024upper}
S.~Das, D.~Dauvergne, and B.~Vir{\'a}g.
\newblock Upper tail large deviations of the directed landscape.
\newblock {\em arXiv preprint arXiv:2405.14924}, 2024.

\bibitem[Dei00]{deift2000orthogonal}
P.~Deift.
\newblock {\em Orthogonal Polynomials and Random Matrices: A Riemann-Hilbert Approach: A Riemann-Hilbert Approach}, volume~3.
\newblock American Mathematical Soc., 2000.

\bibitem[DHS24]{hindy24}
H.~Drillick and L.~Haunschmid-Sibitz.
\newblock The stochastic six-vertex model speed process.
\newblock {\em In preparation}, 2024.

\bibitem[Dim23]{dimitrov2023two}
E.~Dimitrov.
\newblock Two-point convergence of the stochastic six-vertex model to the {Airy} process.
\newblock {\em Communications in Mathematical Physics}, 398(3):925--1027, 2023.

\bibitem[DL23]{drillick2023strong}
H.~Drillick and Y.~Lin.
\newblock Strong law of large numbers for the stochastic six vertex model.
\newblock {\em Electronic Journal of Probability}, 28:1--21, 2023.

\bibitem[DLM25]{das2023large}
Sayan Das, Yuchen Liao, and Matteo Mucciconi.
\newblock Large deviations for the {{\(q\)}}-deformed polynuclear growth.
\newblock {\em Ann. Probab.}, 53(4):1223--1286, 2025.

\bibitem[DOV22]{DOV18}
D.~Dauvergne, J.~Ortmann, and B.~Virag.
\newblock The directed landscape.
\newblock {\em Acta Mathematica}, 229(2), 2022.

\bibitem[DS97]{ds97}
PD~Dragnev and EB~Saff.
\newblock Constrained energy problems with applications to orthogonal polynomials of a discrete variable.
\newblock {\em Journal d’Analyse Mathematique}, 72(1):223--259, 1997.

\bibitem[DT21]{dt21}
S.~Das and L.-C. Tsai.
\newblock Fractional moments of the stochastic heat equation.
\newblock In {\em Annales de l'Institut Henri Poincar{\'e}, Probabilit{\'e}s et Statistiques}, volume~57, pages 778--799. Institut Henri Poincar{\'e}, 2021.

\bibitem[DT24]{das2024solving}
S.~Das and L.-C. Tsai.
\newblock {Solving marginals of the LDP for the directed landscape}.
\newblock {\em arXiv preprint arXiv:2405.17041}, 2024.

\bibitem[DZ99]{deuschel_zeitouni_1999}
J.D. Deuschel and O.~Zeitouni.
\newblock On increasing subsequences of i.i.d. samples.
\newblock {\em Combinatorics, Probability and Computing}, 8(3):247–263, 1999.

\bibitem[DZ22]{dz1}
S.~Das and W.~Zhu.
\newblock Upper-tail large deviation principle for the {ASEP}.
\newblock {\em Electronic Journal of Probability}, 27:1--34, 2022.

\bibitem[EJ17]{emrah}
E.~Emrah and C.~Janjigian.
\newblock Large deviations for some corner growth models with inhomogeneity.
\newblock {\em Markov Process. Related Fields}, 23(1):267--312, 2017.

\bibitem[For10]{Forrester-LogGas}
PJ~Forrester.
\newblock {\em {Log-gases and random matrices}}.
\newblock Princeton University Press, 2010.

\bibitem[Gan21]{ganguly2021random}
S.~Ganguly.
\newblock {Random metric geometries on the plane and Kardar-Parisi-Zhang universality}.
\newblock {\em arXiv preprint arXiv:2110.11287}, 2021.

\bibitem[GH22]{ganguly2022sharp}
S.~Ganguly and M.~Hegde.
\newblock Sharp upper tail estimates and limit shapes for the {KPZ} equation via the tangent method.
\newblock {\em arXiv preprint arXiv:2208.08922}, 2022.

\bibitem[GH23]{ganguly_hegde2023optimal}
S.~Ganguly and M.~Hegde.
\newblock {Optimal tail exponents in general last passage percolation via bootstrapping \& geodesic geometry}.
\newblock {\em Probability Theory and Related Fields}, 186(1):221--284, 2023.

\bibitem[GHZ23]{ganguly2023brownian}
S.~Ganguly, M.~Hegde, and L.~Zhang.
\newblock Brownian bridge limit of path measures in the upper tail of {KPZ} models.
\newblock {\em arXiv preprint arXiv:2311.12009}, 2023.

\bibitem[GL23]{gl20}
P.~Ghosal and Y.~Lin.
\newblock Lyapunov exponents of the {SHE} under general initial data.
\newblock In {\em Annales de l'Institut Henri Poincare (B) Probabilites et statistiques}, volume~59, pages 476--502. Institut Henri Poincar{\'e}, 2023.

\bibitem[GLLT23]{gaudreaulamarre2023kpz}
P.~Y. Gaudreau~Lamarre, Y.~Lin, and L.-C. Tsai.
\newblock {KPZ} equation with a small noise, deep upper tail and limit shape.
\newblock {\em Probab Theory Related Fields}, pages 1--36, 2023.

\bibitem[GS92]{GwaSpohn1992}
L.-H. Gwa and H.~Spohn.
\newblock Six-vertex model, roughened surfaces, and an asymmetric spin {H}amiltonian.
\newblock {\em Physical Review Letters}, 68(6):725--728, 1992.

\bibitem[GS13]{georgiou2013large}
N.~Georgiou and T.~Sepp{\"a}l{\"a}inen.
\newblock Large deviation rate functions for the partition function in a log-gamma distributed random potential.
\newblock {\em The Annals of Probability}, 41(6):4248--4286, 2013.

\bibitem[GS24]{gs24}
P.~Ghosal and G.L.F. Silva.
\newblock Six vertex model and the meixner ensemble.
\newblock {\em In preparation}, 2024.

\bibitem[GV85]{gessel1985binomial}
I.~Gessel and G.~Viennot.
\newblock Binomial determinants, paths, and hook length formulae.
\newblock {\em Advances in Mathematics}, 58(3):300--321, 1985.

\bibitem[HMMSD22]{huh_et_al_schur_log_concavity}
J.~Huh, J.P. Matherne, K.~Mészáros, and A.~St.~Dizier.
\newblock Logarithmic concavity of schur and related polynomials.
\newblock {\em Trans. Amer. Math. Soc.}, 375:4411--4427, 2022.

\bibitem[HMS19]{hartmann19}
A.~K. Hartmann, B.~Meerson, and P.~Sasorov.
\newblock Optimal paths of nonequilibrium stochastic fields: The {K}ardar{-P}arisi{-Z}hang interface as a test case.
\newblock {\em Physical Review Research}, 1(3):032043, 2019.

\bibitem[IMS22]{IMS_KPZ_free_fermions}
T.~Imamura, M.~Mucciconi, and T.~Sasamoto.
\newblock {Solvable models in the KPZ class: approach through periodic and free boundary Schur measure}.
\newblock {\em arXiv preprint arXiv:2204.08420}, 2022.

\bibitem[IMS24]{IMS_matching}
T.~Imamura, M.~Mucciconi, and T.~Sasamoto.
\newblock {Identity between restricted Cauchy sums for the q-Whittaker and skew Schur polynomials}.
\newblock {\em SIGMA. Symmetry, Integrability and Geometry: Methods and Applications}, 20:064, 2024.

\bibitem[Jan15]{janjigian2015large}
C.~Janjigian.
\newblock Large deviations of the free energy in the {O’C}onnell--{Y}or polymer.
\newblock {\em Journal of Statistical Physics}, 160(4):1054--1080, 2015.

\bibitem[Jan19]{jan19}
C.~Janjigian.
\newblock {Upper tail large deviations in Brownian directed percolation}.
\newblock 2019.

\bibitem[Jen00]{jen00}
L.~Jensen.
\newblock {\em The asymmetric exclusion process in one dimension}.
\newblock PhD thesis, Ph. D. dissertation, New York Univ., New York, 2000.

\bibitem[Joh00a]{Johansson_growth_matrices}
K.~Johannson.
\newblock Random growth and random matrices.
\newblock {\em Unpublished notes}, 2000.
\newblock \url{http://web.mit.edu/18.325/www/johansso.pdf}.

\bibitem[Joh00b]{johansson2000shape}
K.~Johansson.
\newblock {Shape fluctuations and random matrices}.
\newblock {\em Communications in Mathematical Physics}, 209(2):437--476, 2000.
\newblock arXiv:math/9903134 [math.CO].

\bibitem[Kes86]{kesten}
H~Kesten.
\newblock {Aspects of first passage percolation. {\'E}cole d’{\'E}t{\'e} de Probabilit{\'e}s de Saint Flour XIV-1984, volume 1180 of Lecture Notes in Mathematics}, 1986.

\bibitem[Kim21]{kim21}
Y.H. Kim.
\newblock The lower tail of the half-space {KPZ} equation.
\newblock {\em Stochastic Processes and their Applications}, 142:365--406, 2021.

\bibitem[KK07]{kolokolov07}
IV~Kolokolov and SE~Korshunov.
\newblock Optimal fluctuation approach to a directed polymer in a random medium.
\newblock {\em Phys Rev B}, 75(14):140201, 2007.

\bibitem[KK09]{kolokolov09}
IV~Kolokolov and SE~Korshunov.
\newblock Explicit solution of the optimal fluctuation problem for an elastic string in a random medium.
\newblock {\em Phys Rev E}, 80(3):031107, 2009.

\bibitem[KLD17]{krajenbrink17short}
A.~Krajenbrink and P.~Le~Doussal.
\newblock Exact short-time height distribution in the one-dimensional {K}ardar{--P}arisi{--Z}hang equation with {B}rownian initial condition.
\newblock {\em Phys Rev E}, 96(2):020102, 2017.

\bibitem[KLD18a]{krajenbrink18half}
A.~Krajenbrink and P.~Le~Doussal.
\newblock Large fluctuations of the {KPZ} equation in a half-space.
\newblock {\em SciPost Phys}, 5:032, 2018.

\bibitem[KLD18b]{krajenbrink18simple}
A.~Krajenbrink and P.~Le~Doussal.
\newblock Simple derivation of the {$(-\lambda H)^{5/2}$} tail for the 1{D} {KPZ} equation.
\newblock {\em J Stat Mech Theory Exp}, 2018(6):063210, 2018.

\bibitem[KLD21]{krajenbrink21}
A.~Krajenbrink and P.~Le~Doussal.
\newblock Inverse scattering of the {Z}akharov{--S}habat system solves the weak noise theory of the {K}ardar{--P}arisi{--Z}hang equation.
\newblock {\em Phys Rev Lett}, 127(6):064101, 2021.

\bibitem[KLD22]{krajenbrink22flat}
A.~Krajenbrink and Pierre Le~Doussal.
\newblock Inverse scattering solution of the weak noise theory of the {K}ardar{--P}arisi{--Z}hang equation with flat and {B}rownian initial conditions.
\newblock {\em Phys Rev E}, 105:054142, 2022.

\bibitem[KLD23]{krajenbrink23}
A.~Krajenbrink and P.~Le~Doussal.
\newblock Crossover from the macroscopic fluctuation theory to the {K}ardar{--P}arisi{--Z}hang equation controls the large deviations beyond {E}instein's diffusion.
\newblock {\em Physical Review E}, 107(1):014137, 2023.

\bibitem[KLDP18]{krajenbrink2018systematic}
A.~Krajenbrink, P.~Le~Doussal, and S.~Prolhac.
\newblock Systematic time expansion for the {Kardar--Parisi--Zhang equation}, linear statistics of the {GUE} at the edge and trapped fermions.
\newblock {\em Nuclear Physics B}, 936:239--305, 2018.

\bibitem[KMS16]{kamenev16}
A.~Kamenev, B.~Meerson, and PV~Sasorov.
\newblock Short-time height distribution in the one-dimensional {K}ardar{--P}arisi{--Z}hang equation: Starting from a parabola.
\newblock {\em Phys Rev E}, 94(3):032108, 2016.

\bibitem[KOV93]{Kerov1993}
S.~Kerov, G.~Olshanski, and A.~Vershik.
\newblock Harmonic analysis on the infinite symmetric group. {A} deformation of the regular representation.
\newblock {\em Comptes Rendus Acad. Sci. Paris Ser. I}, 316:773--778, 1993.

\bibitem[KPZ86]{KPZ1986}
M.~Kardar, G.~Parisi, and Y.~Zhang.
\newblock Dynamic scaling of growing interfaces.
\newblock {\em Physical Review Letters}, 56(9):889, 1986.

\bibitem[LDMRS16]{ledoussal16short}
P.~Le~Doussal, SN~Majumdar, A.~Rosso, and G.~Schehr.
\newblock Exact short-time height distribution in the one-dimensional {K}ardar{--P}arisi{--Z}hang equation and edge fermions at high temperature.
\newblock {\em Phys Rev Lett}, 117(7):070403, 2016.

\bibitem[LDMS16]{ledoussal16long}
P.~Le~Doussal, SN~Majumdar, and G.~Schehr.
\newblock Large deviations for the height in 1{D} {K}ardar-{P}arisi-{Z}hang growth at late times.
\newblock {\em EPL (Europhysics Letters)}, 113(6):60004, 2016.

\bibitem[Lin21]{lin2020kpz}
Y.~Lin.
\newblock {Lyapunov exponents of the half-line SHE}.
\newblock {\em Journal of Statistical Physics}, 183:1--34, 2021.

\bibitem[LPP07]{Lam_Postnikov_Pylyavskyy_concavity}
T.~Lam, A.~Postnikov, and P.~Pylyavskyy.
\newblock Schur positivity and schur log-concavity.
\newblock {\em American Journal of Mathematics}, 129(6):1611--1622, 2007.

\bibitem[LS77]{logan_shepp1977variational}
B.F. Logan and L.A. Shepp.
\newblock {A variational problem for random Young tableaux}.
\newblock {\em Advances in Mathematics}, 26(2):206--222, 1977.

\bibitem[LS23a]{landon2023tail}
B.~Landon and P.~Sosoe.
\newblock {Tail estimates for the stationary stochastic six vertex model and ASEP}.
\newblock {\em arXiv preprint arXiv:2308.16812}, 2023.

\bibitem[LS23b]{landon2023upper}
B.~Landon and P.~Sosoe.
\newblock {Upper tail bounds for stationary KPZ models}.
\newblock {\em Communications in Mathematical Physics}, pages 1--25, 2023.

\bibitem[LS24]{landon2022tail}
B.~Landon and P.~Sosoe.
\newblock {Tail bounds for the O’Connell-Yor polymer}.
\newblock {\em Electronic Journal of Probability}, 29:1--47, 2024.

\bibitem[LT21]{lin21}
Y.~Lin and L.-C. Tsai.
\newblock Short time large deviations of the {KPZ} equation.
\newblock {\em Commun Math Phys}, 386(1):359--393, 2021.

\bibitem[LT25a]{lin22}
Y.~Lin and L.-C. Tsai.
\newblock A lower-tail limit in the weak noise theory.
\newblock In {\em Annales de l'Institut Henri Poincare (B) Probabilites et statistiques}, volume~61, pages 1334--1347. Institut Henri Poincar{\'e}, 2025.

\bibitem[LT25b]{lin23}
Y.~Lin and L.-C. Tsai.
\newblock {Spacetime limit shapes of the KPZ equation in the upper tails}.
\newblock {\em Communications in Mathematical Physics}, 406(5):113, 2025.

\bibitem[Mac95]{Macdonald1995}
I.G. Macdonald.
\newblock {\em Symmetric functions and {H}all polynomials}.
\newblock Oxford University Press, 2nd edition, 1995.

\bibitem[Meh04]{mehta2004random}
M.L. Mehta.
\newblock {\em {Random matrices}}.
\newblock Academic press, 2004.

\bibitem[MKV16]{meerson16}
B.~Meerson, E.~Katzav, and A.~Vilenkin.
\newblock Large deviations of surface height in the {K}ardar{--P}arisi{--Z}hang equation.
\newblock {\em Phys Rev Lett}, 116(7):070601, 2016.

\bibitem[MS17]{meerson17}
B.~Meerson and J.~Schmidt.
\newblock Height distribution tails in the {K}ardar{--P}arisi{--Z}hang equation with brownian initial conditions.
\newblock {\em J Stat Mech Theory Exp}, 2017(10):103207, 2017.

\bibitem[MV18]{meerson18}
B.~Meerson and A.~Vilenkin.
\newblock Large fluctuations of a {K}ardar{-P}arisi{-Z}hang interface on a half line.
\newblock {\em Physical Review E}, 98(3):032145, 2018.

\bibitem[Oko97]{Okounkov1997}
A.~Okounkov.
\newblock {Log-Concavity of Multiplicities with Application to Characters of $U(\infty)$}.
\newblock {\em Advances in Mathematics}, 127(2):258--282, 1997.

\bibitem[Oko01]{okounkov2001infinite}
A.~Okounkov.
\newblock {Infinite wedge and random partitions}.
\newblock {\em Selecta Mathematica}, 7(1):57--81, 2001.

\bibitem[Oko03]{Okounkov2003_why_would}
A.~Okounkov.
\newblock Why would multiplicities be log-concave?
\newblock {\em The Orbit Method in Geometry and Physics. Progress in Mathematics.}, 213:329–347, 2003.

\bibitem[OT19]{ot19}
S.~Olla and L.-C. Tsai.
\newblock {Exceedingly large deviations of the totally asymmetric exclusion process}.
\newblock {\em Electronic Journal of Probability}, 24(none):1 -- 71, 2019.

\bibitem[Pau35]{pauling1935structure}
L.~Pauling.
\newblock The structure and entropy of ice and of other crystals with some randomness of atomic arrangement.
\newblock {\em Journal of the American Chemical Society}, 57(12):2680--2684, 1935.

\bibitem[PS11]{pastur2011eigenvalue}
L.A. Pastur and M.~Shcherbina.
\newblock {\em Eigenvalue distribution of large random matrices}.
\newblock Number 171. American Mathematical Soc., 2011.

\bibitem[QT21]{qt21}
J.~Quastel and L.-C. Tsai.
\newblock {Hydrodynamic large deviations of TASEP}.
\newblock {\em arXiv preprint arXiv:2104.04444}, 2021.

\bibitem[Qua13]{quastel_introduction_to_KPZ}
J.~Quastel.
\newblock {Introduction to KPZ}.
\newblock {\em {Current Developments in Mathematics}}, 2011, 03 2013.

\bibitem[Rom15]{romik_2015}
D.~Romik.
\newblock {\em The Surprising Mathematics of Longest Increasing Subsequences}.
\newblock Institute of Mathematical Statistics Textbooks. Cambridge University Press, 2015.

\bibitem[Sep98a]{sep98a}
T.~Sepp{\"a}l{\"a}inen.
\newblock Coupling the totally asymmetric simple exclusion process with a moving interface.
\newblock {\em Markov Process. Related Fields}, 4(4):593--628, 1998.

\bibitem[Sep98b]{sepp98mprf}
T.~Sepp\"{a}l\"{a}inen.
\newblock Hydrodynamic scaling, convex duality and asymptotic shapes of growth models.
\newblock {\em Markov Process. Related Fields}, 4(1):1--26, 1998.

\bibitem[Sep98c]{seppalainen_98_increasing}
T.~Sepp{\"a}l{\"a}inen.
\newblock Large deviations for increasing sequences on the plane.
\newblock {\em Probability Theory and Related Fields}, 112:221--244, 10 1998.

\bibitem[SMP17]{sasorov2017large}
P.~Sasorov, B.~Meerson, and S.~Prolhac.
\newblock Large deviations of surface height in the 1+ 1-dimensional {Kardar--Parisi--Zhang} equation: exact long-time results for $\lambda h< 0$.
\newblock {\em Journal of Statistical Mechanics: Theory and Experiment}, 2017(6):063203, 2017.

\bibitem[ST97]{saff1997logarithmic}
E.B. Saff and V.~Totik.
\newblock {\em Logarithmic Potentials with External Fields}.
\newblock Grundlehren der mathematischen Wissenschaften. Springer, Berlin, Heidelberg, 1 edition, 1997.

\bibitem[Tsa22]{tsai_lower_tail}
L.-C. Tsai.
\newblock {Exact lower-tail large deviations of the KPZ equation}.
\newblock {\em Duke Mathematical Journal}, 171(9):1879 -- 1922, 2022.

\bibitem[Tsa25]{tsai2023high}
L.-C. Tsai.
\newblock {High moments of the SHE in the clustering regimes}.
\newblock {\em Journal of Functional Analysis}, 288(1):110675, 2025.

\bibitem[Var04]{var04}
S.R.S. Varadhan.
\newblock Large deviations for the asymmetric simple exclusion process.
\newblock {\em Stochastic analysis on large scale interacting systems}, 39:1--27, 2004.

\bibitem[Ver24]{verges2024large}
J.~Verges.
\newblock Large deviation principle at speed $ n^d $ for the random metric in first-passage percolation.
\newblock {\em arXiv preprint arXiv:2404.09589}, 2024.

\bibitem[Zyg18]{zygouras_review}
N.~Zygouras.
\newblock {Some algebraic structures in the KPZ universality}.
\newblock {\em arXiv preprint}, 2018.
\newblock arXiv:1812.07204v3 [math.PR].



\end{thebibliography}
\newcommand{\etalchar}[1]{$^{#1}$}

\end{document}